\theoremstyle{plain}
\newtheorem*{theorem}{Theorem}
\newtheorem*{lemma}{Lemma}
\newtheorem*{proposition}{Proposition}
\newtheorem*{corollary}{Corollary}
\newtheorem*{conjecture-theorem}{Conjecture/Theorem}
\theoremstyle{definition}
\newtheorem*{definition}{Definition}
\newtheorem*{example}{Example}
\theoremstyle{remark}
\newtheorem*{remark}{Remark}
\def\namedlabel#1#2{\begingroup
    #2%
    \def\@currentlabel{#2}%
    \phantomsection\label{#1}\endgroup
}
\newcommand{\bA}{\mathbb{A}}
\newcommand{\bC}{\mathbb{C}}
\newcommand{\bP}{\mathbb{P}}
\newcommand{\bR}{\mathbb{R}}
\newcommand{\bV}{\mathbb{V}}
\newcommand{\bW}{\mathbb{W}}
\newcommand{\bZ}{\mathbb{Z}}
\newcommand{\cA}{\mathcal{A}}
\newcommand{\cE}{\mathcal{E}}
\newcommand{\cF}{\mathcal{F}}
\newcommand{\cG}{\mathcal{G}}
\newcommand{\cI}{\mathcal{I}}
\newcommand{\cK}{\mathcal{K}}
\newcommand{\cM}{\mathcal{M}}
\newcommand{\cN}{\mathcal{N}}
\newcommand{\cO}{\mathcal{O}}
\newcommand{\cP}{\mathcal{P}}
\newcommand{\cQ}{\mathcal{Q}}
\newcommand{\cR}{\mathcal{R}}
\newcommand{\cT}{\mathcal{T}}
\newcommand{\cV}{\mathcal{V}}
\newcommand{\cW}{\mathcal{W}}
\newcommand{\rV}{\mathscr{V}}
\newcommand{\rW}{\mathscr{W}}
\newcommand{\sA}{\mathrm{A}}
\newcommand{\sE}{\mathsf{E}}
\newcommand{\sK}{\mathrm{K}}
\newcommand{\sP}{\mathsf{P}}
\newcommand{\sR}{\mathsf{R}}
\newcommand{\sT}{\mathrm{T}}
\newcommand{\sV}{\mathsf{V}}
\newcommand{\sZ}{\mathsf{Z}}
\newcommand{\fC}{\mathfrak{C}}
\newcommand{\fY}{\mathfrak{Y}}
\newcommand{\fz}{\mathfrak{z}}
\newcommand{\rb}{\mathrm{b}}
\newcommand{\rr}{\mathrm{r}}
\newcommand{\vsquare}{{\textcolor{gray}{\blacksquare}}}
\newcommand{\pt}{\mathrm{pt}}
\newcommand{\vir}{\mathrm{vir}}
\newcommand{\CY}{\mathrm{CY}}
\newcommand{\orb}{\mathrm{orb}}
\newcommand{\eff}{\mathrm{eff}}
\newcommand{\cat}[1]{\mathsf{#1}}
\renewcommand{\vec}[1]{\bm{#1}}
\renewcommand{\bar}[1]{\overline{#1}}
\renewcommand{\tilde}[1]{\widetilde{#1}}
\renewcommand{\check}[1]{\widecheck{#1}}
\newcommand{\QMaps}{\mathsf{QMaps}}
\newcommand{\Pairs}{\mathsf{Pairs}}
\newcommand{\PiPairs}{\pi\text{-}\mathsf{Pairs}}
\newcommand{\Ell}{\mathsf{Ell}}
\newcommand{\cRep}{\mathcal{R}{\it ep}}
\newcommand{\cube}{\mbox{\mancube}}
\DeclareMathOperator{\Hilb}{Hilb}
\DeclareMathOperator{\SL}{SL}
\DeclareMathOperator{\GL}{GL}
\DeclareMathOperator{\Hom}{Hom}
\DeclareMathOperator{\Ext}{Ext}
\DeclareMathOperator{\cHom}{\mathcal{H}{\it om}}
\DeclareMathOperator{\cExt}{\mathcal{E}{\it xt}}
\DeclareMathOperator{\hk}{hk}
\DeclareMathOperator{\ev}{ev}
\DeclareMathOperator{\im}{im}
\DeclareMathOperator{\supp}{supp}
\DeclareMathOperator{\coker}{coker}
\DeclareMathOperator{\tot}{tot}
\DeclareMathOperator{\Pic}{Pic}
\DeclareMathOperator{\Lie}{Lie}
\DeclareMathOperator{\Stab}{Stab}
\DeclareMathOperator{\level}{level}
\DeclarePairedDelimiter{\inner}{\langle}{\rangle}
\tikzset{
  baseline={([yshift=-.5ex]current bounding box.center)},
  vertex/.style={shape=circle,fill=black,minimum size=4pt,inner sep=0},
}
\title{Quasimaps and stable pairs}
\author{Henry Liu}
\date{\today}
\begin{document}

\maketitle

\begin{abstract}
  We prove an equivalence between the Bryan--Steinberg theory of
  $\pi$-stable pairs on $Y = \cA_{m-1} \times \bC$ and the theory of
  quasimaps to $X = \Hilb(\cA_{m-1})$, in the form of an equality of
  K-theoretic equivariant vertices. In particular, the combinatorics
  of both vertices are described explicitly via box counting. Then we
  apply the equivalence to study the implications for sheaf-counting
  theories on $Y$ arising from 3d mirror symmetry for quasimaps to
  $X$, including the Donaldson--Thomas crepant resolution conjecture.
\end{abstract}

\section{Introduction}

\subsection{Curve counting}

\subsubsection{}

Given a smooth variety $Z$, one can construct many compactifications
of the moduli space of smooth curves in $Z$. These moduli spaces
differ in how they treat the data of a curve in $Z$.
\begin{itemize}
\item Viewing a curve as the data of a map $f\colon C \to Z$ and
  allowing the domain $C$ to develop nodal singularities in the
  compactification yields moduli spaces of stable maps, e.g. as in
  Gromov--Witten (GW) theory.

\item Viewing a curve as the data of an ideal sheaf $\cI_C \subset
  \cO_Z$ and allowing $C$ to degenerate into $1$-dimensional sub{\it
    schemes} in the compactification yields moduli spaces of sheaves,
  e.g. as in Donaldson--Thomas (DT) theory.
\end{itemize}
Morally, one expects all enumerative theories of curves in $Z$ to be
equivalent, possibly up to some wall-crossing behavior: a change of
variables, analytic continuation, and/or normalization. One example of
such an equivalence is the celebrated GW/DT correspondence
\cite{Maulik2006} \cite{Maulik2006a}, proved for all toric $3$-folds
in \cite{Maulik2011}. Consequently, tools from DT-like sheaf-counting
theories can be applied with great effectiveness to GW theory and vice
versa, e.g. the proof of the Igusa cusp form conjecture in
\cite{Oberdieck2018}.

\subsubsection{}

The main result of this paper,
Theorem~\ref{thm:bs-quasimap-correspondence}, is another (much
simpler) such equivalence, at the level of {\it equivariant} and {\it
  K-theoretic} curve counts. For a moduli space $\cM$, working in
equivariant K-theory means that the enumerative invariant to be
considered is not the {\it cohomological} partition function
\[ \sZ_{\cM}^{\text{coh}} \coloneqq \sum_{\deg} \vec x^{\deg} \int_{[\cM_{\deg}]^{\vir}} 1 \in H_{\sT}^*(\pt)_{\text{localized}}[[\vec x^{\pm}]] \]
but rather the {\it K-theoretic} partition function
\[ \sZ_{\cM} \coloneqq \sum_{\deg} \vec x^{\deg} \chi\left(\cM_{\deg}, \hat \cO_{\cM}^{\vir} \right) \in K_{\sT}(\pt)_{\text{localized}}[[\vec x^{\pm}]]. \]
Here, $\vec x$ are variables recording the degree of the curve along
with any other discrete data parameterizing connected components of
$\cM = \bigsqcup_{\deg} \cM_{\deg}$. The virtual structure sheaf
$\cO_{\cM}^{\vir}$ (see e.g. \cite[Section 2.3]{Lee2004},
\cite[Section 3.2]{Ciocan-Fontanine2009}) is the K-theoretic analogue
of the virtual fundamental class $[\cM]^{\vir}$. Its {\it symmetrized}
version $\hat\cO_{\cM}^{\vir}$ is roughly $\cO_{\cM}^{\vir} \otimes
\cK_{\vir}^{1/2}$ where $\cK_{\vir} \coloneqq \det (T^{\vir})^\vee$ is
the virtual canonical. The importance of the twist by
$\cK_{\vir}^{1/2}$ is discussed in \cite[Section 3.2]{Okounkov2017}.

K-theoretic invariants recover cohomological invariants in a
particular limit, and are therefore richer and more general.
Equivariance gives us a handle on partition functions via certain
quantum differential/difference equations that they satisfy. These
features make equivariant K-theory a productive setting for
enumerative geometry; see \cite{Okounkov2017} for an introduction.

\subsubsection{}

In this paper we study only the genus-$0$ setting $g(C) = 0$, where $C
= \bP^1$ when $C$ is smooth. Degeneration arguments reduce a given
general $C$ to this case.

\subsection{The enumerative theories}

\subsubsection{}

In the case where the target space $X = V \sslash_\theta G$ is a
sufficiently nice geometric invariant theory (GIT) quotient, the
theory of stable {\it quasimaps} \cite{Ciocan-Fontanine2014} provides
an alternate compactification for maps $f\colon \bP^1 \to X$ which is
more amenable to computation, among other nice properties, compared to
stable maps. It is related to GW theory via a series of wall-crossings
whose composition gives the classical mirror map of
\cite{Givental1996} \cite{Lian1999}. Roughly, a quasimap to $V
\sslash_\theta G$ is a choice of principal $G$-bundle $\cP$ on $\bP^1$
and a section $s \in H^0(\bP^1, \cP \times_G V)$ satisfying a
stability condition. See section~\ref{sec:quasimap-theory} for
details.

Quasimap theory is especially rich when $X$ is a particularly nice
class of GIT quotients called Nakajima quiver varieties
\cite{Nakajima1994}. These are smooth symplectic varieties associated
to quivers, and are closely related to moduli of quiver
representations, moduli of sheaves on symplectic surfaces, and, from
physics, moduli of vacua in 3d $\cN=4$ supersymmetric gauge theories.
Hence their curve counts often have deep representation-theoretic or
physical meaning.

\subsubsection{}

Let $\QMaps(X)$ be the moduli of quasimaps to a Nakajima quiver
variety $X$. One can ask whether there is some kind of sheaf-counting
theory, on some space related to $X$, whose partition function is
equal to $\sZ_{\QMaps}(X)$. Indeed, if $X = \Hilb(S)$ is the Hilbert
scheme of points on a surface $S$, the graph construction gives a
rough equivalence
\begin{equation} \label{eq:motivation-ADE-fibration-over-P1}
  \begin{pmatrix} f\colon \bP^1 \to \Hilb(S) \end{pmatrix} \approx \begin{pmatrix} 1\text{-dimensional subscheme of } S \times \bP^1 \\ \text{with non-zero degree along } \bP^1 \end{pmatrix}.
\end{equation}

The simplest Nakajima quiver varieties of the form $\Hilb(S)$ are when
$S$ is an ADE surface, namely a minimal resolution of $\bC^2/\Gamma$
for a finite subgroup $\Gamma \subset \SL(2, \bC)$. So, from now on,
let $S$ be an ADE surface. Then $\Hilb(S)$ is the Nakajima quiver
variety associated to the affine ADE quiver corresponding to $S$.

\subsubsection{}

Let $Y \coloneqq S \times \bP^1$. To match with quasimaps to $X =
\Hilb(S)$, DT theory is {\it not} the correct sheaf-counting theory to
take on $Y$. DT theory counts $1$-dimensional subschemes $C \subset
Y$, and therefore its moduli space is
\[ \cat{DT}(Y) \coloneqq \Hilb(Y, \text{curves}). \]
But arbitrary $1$-dimensional subschemes include two types of unwanted
components which do not occur for quasimaps:
\begin{enumerate}
\item $0$-dimensional components that range freely over $Y$;
\item $1$-dimensional components that lie purely in a fiber $S \times
  \{\pt\} \subset Y$.
\end{enumerate}

\subsubsection{}

One way to remove the unwanted contributions of type 1 is to take the
DT partition function $\sZ_{\cat{DT}}$ and divide by the partition
function for the moduli
\[ \cat{DT}_0(Y) \coloneqq \Hilb(Y, \text{points}) \]
of {\it points} on $Y$. A better and more geometric way is to use
Pandharipande--Thomas (PT) theory \cite{Pandharipande2009}. Roughly,
if DT theory counts surjections
\[ \cO_Y \xrightarrow{s} \cO_C \to 0, \]
PT theory counts {\it stable pairs} $[\cO_Y \to \cF]$ in
\[ \cO_Y \xrightarrow{s} \cF \to \cQ \to 0 \]
where $\cQ$ is zero-dimensional. We think of this as allowing the map
$s$ in DT theory to develop a zero-dimensional cokernel $\cQ$ instead
of being a surjection.

If $\Pairs(Y)$ denotes the moduli of stable pairs, the well-known
conjectural DT/PT correspondence predicts that
\footnote{Here is one place to emphasize the importance of using the
  symmetrized $\hat\cO^{\vir}$ instead of $\cO^{\vir}$: the DT/PT
  correspondence (and probably similar wall-crossings) {\it fails to
    hold} if we just use $\cO^{\vir}$.}
\begin{equation} \label{eq:dt-pt-correspondence}
  \sZ_{\Pairs}(Y) = \frac{\sZ_{\cat{DT}}(Y)}{\sZ_{\cat{DT}_0}(Y)}.
\end{equation}

\subsubsection{}

To remove the unwanted contributions of type 2, we can repeat the
DT/PT story as follows. A fiber of $Y \to \bP^1$ is an ADE surface
$S$, so it has an exceptional divisor $E$ for the resolution
$\pi\colon S \to \bC^2/\Gamma$. Let $\Pairs_{\text{exc}}(Y)$ be
the moduli of stable pairs on $Y$ supported only on $E$ for some
fiber. For our purposes, $\sZ_{\Pairs_{\text{exc}}}$ is the
correct series to normalize by.

Bryan--Steinberg (BS) theory \cite{Bryan2016} provide a geometric
approach to this normalization. Roughly, while PT theory allows the
cokernel $\cQ$ to be $0$-dimensional, BS theory allows $\cQ$ to
develop $1$-dimensional components supported only on $E$. Such pairs
$[\cO_Y \xrightarrow{s} \cF]$ are called {\it $\pi$-stable pairs}. A
precise definition is in section~\ref{sec:bs-theory-definition}. Since
it depends not only on $Y$ but also on the resolution $\pi$, we denote
the BS moduli space $\PiPairs(Y)$.

As with the DT/PT correspondence, the conjectural PT/BS correspondence
is that
\footnote{As with the DT/PT correspondence, at the K-theoretic level
  this also requires the symmetrized $\hat\cO^{\vir}$.}
\begin{equation} \label{eq:bs-pt-correspondence}
  \sZ_{\PiPairs}(Y) = \frac{\sZ_{\Pairs}(Y)}{\sZ_{\Pairs_{\text{exc}}}(Y)}.
\end{equation}
Assuming the DT/PT correspondence, one can replace the rhs by
$\sZ_{\cat{DT}}(X)/\sZ_{\cat{DT}_{\text{exc}}}(X)$ where
$\cat{DT}_{\text{exc}}$ is defined in exactly the same way as
$\Pairs_{\text{exc}}$.

\subsection{Main result(s)}

\subsubsection{}

The main result of this paper is the {\it BS/quasimaps correspondence}
of Theorem~\ref{thm:bs-quasimap-correspondence}, which implies that
\[ \sZ_{\PiPairs}(\cA_{m-1} \times \bP^1) = \sZ_{\QMaps}(\Hilb(\cA_{m-1})) \]
but is a more precise statement. Namely, for toric geometries, $\sZ$
factors as a product of contributions from torus-fixed points and
torus-invariant curves. These contributions are called {\it vertices}
$\sV$ and {\it edges} $\sE$ respectively, and one can match vertices
and edges for different enumerative theories {\it individually}. While
it is very straightforward to match edges for DT/PT/BS and quasimaps,
vertices have a certain combinatorial complexity. The more precise
content of Theorem~\ref{thm:bs-quasimap-correspondence} is that
\begin{equation} \label{eq:bs-quasimap-equivalence-P1}
  \sV_{\PiPairs}(\cA_{m-1} \times \bC) = \sV_{\QMaps}(\Hilb(\cA_{m-1}))
\end{equation}
Both \eqref{eq:dt-pt-correspondence} and
\eqref{eq:bs-pt-correspondence} should also be refined in this way,
using the DT \cite{Maulik2006} and PT \cite{Pandharipande2009a}
vertices.

For $3$-folds, DT/PT vertices are objects on $\bC^3$ and have three
{\it legs}, corresponding to what happens along non-compact
torus-fixed curves. Legs are where vertices glue onto edges. In BS
theory, the notion of a vertex depends on the resolution $\pi$. For $Y
= S \times \bP^1$, the BS vertex is an object on $S \times \bC$. It
has one set of $m$ legs from the $\bC$ direction, and two legs from
$S$. We say the BS vertex has {\it one leg} when these latter two legs
are empty. Then $\sZ_{\PiPairs}(S \times \bP^1)$ is the gluing of two
BS $1$-leg vertices along an edge. In this language,
\eqref{eq:bs-quasimap-equivalence-P1} says the BS $1$-leg vertex is
equal to the quasimap vertex. Note that in quasimap theory there is no
notion of $2$-leg or $3$-leg vertices.

\subsubsection{}

In section~\ref{sec:basic-geometry}, we set up notation for $Y =
\cA_{m-1} \times \bC$ and its torus action, and then in
section~\ref{sec:bs-vertex} we define the BS vertex. The proof of the
BS/quasimaps correspondence goes via equivariant localization, and so
section~\ref{sec:bs-vertex-boxes} gives an explicit combinatorial
description of the BS $1$-leg vertex as a weighted sum over certain 3d
box configurations similar to 3d partitions. An important detail,
discussed in section~\ref{sec:bs-fixed-loci-structure}, is that even
for the $1$-leg vertex the moduli of $\pi$-stable pairs has
torus-fixed loci of arbitrarily large dimension; such a phenomenon is
not present in DT or PT theory.

In section~\ref{sec:two-hilbert-schemes}, we begin by understanding
$\Hilb(\cA_{m-1})$ and a related space $\Hilb([\bC^2/\Gamma])$ as
Nakajima quiver varieties. Here $[\bC^2/\Gamma]$ is the CY3 orbifold
associated to $\cA_{m-1}$. While we will not need
$\Hilb([\bC^2/\Gamma])$ for the BS/quasimaps correspondence, it plays
a significant role later in the DT crepant resolution conjecture.
Section~\ref{sec:hilbert-schemes-equivariant-geometry} gives a
combinatorial description of the torus-equivariant geometry of both
Hilbert schemes using 2d box configurations similar to Young diagrams.
Then section~\ref{sec:quasimap-theory} defines the quasimap vertex,
and explicitly describe it as a weighted sum over labeled such 2d box
configurations.

\subsubsection{}

The statement and proof of the BS/quasimap correspondence occupies
section~\ref{sec:bs-quasimaps}. Although we prove it only for the
geometry $\cA_{m-1} \times \bP^1$ for ease of exposition, the
correspondence certainly extends beyond the type A case to any ADE
bundle over $\bP^1$. This is the most general setting in which
comparable BS and quasimap theories can be defined. For type D and
type E, the combinatorial interpretation of contributions to the
quasimap vertex in terms of colored boxes is less straightforward.

The proof of the BS/quasimaps correspondence involves constructing an
isomorphism of torus-fixed loci which respects the tangent-obstruction
theories. This is done using the torus-equivariant derived McKay
equivalence of section~\ref{sec:bs-quasimaps-Tvir-correspondence}.
That it matches the stability conditions defining $\pi$-stable pairs
and quasimaps is the content of
section~\ref{sec:stability-conditions}, where we also show that the
isomorphism of fixed loci extends to an isomorphism of BS and quasimap
$1$-leg moduli spaces, thereby showing that the two theories are truly
equivalent.

\subsubsection{}

The theory of quasimaps to Nakajima quiver varieties can be used to
study {\it 3d mirror symmetry}, also known as {\it symplectic duality}
\cite{Intriligator1996}. This is an intimate (conjectural)
relationship between two {\it mirror} Nakajima quiver varieties $X$
and $\check X$ which relates their quasimap vertices; see
section~\ref{sec:3d-mirror-symmetry} for more details. The
relationship between the two quasimap vertices, once pushed through
the BS/quasimaps correspondence, yields a remarkable range of known
results, e.g. the geometric engineering of certain Nekrasov partition
functions (section~\ref{sec:3d-mirror-symmetry-CY}), and the DT
crepant resolution conjecture (section~\ref{sec:DT-CRC}). In
particular, these results are very special limits of the full 3d
mirror symmetry, which yields a (conjectural) statement of the DT CRC
for equivariant K-theoretic vertices.

\subsection{Acknowledgements}

This project sprouted from an (ongoing) attempt to prove a
PT/quasimaps correspondence, to which I was first introduced by Noah
Arbesfeld, Andrei Okounkov, and Petr Pushkar. I would like to thank
them along with Jim Bryan, Yakov Kononov, and Andrey Smirnov for many
productive discussions. I am especially grateful to Jim Bryan and
Andrei Okounkov for reading a preliminary draft and for numerous
helpful suggestions which improved the readability and content of this
paper.

\section{Bryan--Steinberg pairs}

\subsection{The threefold}
\label{sec:basic-geometry}

\subsubsection{}

While many constructions in this paper can be done more generally, the
basic geometry of interest is
\[ Y \coloneqq \cA_{m-1} \times C \]
for a curve $C$ which will either be $\bC$ or $\bP^1$. The surface $S
\coloneqq \cA_{m-1}$ is the minimal resolution of the type A
singularity $\bC^2/\Gamma$. Here, $\bC^2$ has the canonical symplectic
form, and
\[ \Gamma \coloneqq \bZ/m \subset \SL(2, \bC) \]
acts by the symplectomorphism
\[ \xi \cdot (x, y) \coloneqq (\xi x, \xi^{-1} y). \]
By an abuse of notation, we conflate the coordinate functions $(x, y, z)$ on
$\bC^2 \times C$ with the weights of the torus
\begin{equation} \label{eq:full-torus-weights}
  \sT \coloneqq \bC^\times_x \times \bC^\times_y \times \bC^\times_z
\end{equation}
acting on it. \footnote{Our convention for weights is {\it opposite}
  to some parts of the literature, notably \cite{Okounkov2017} (which
  develops K-theoretic quasimap theory) and successive works. For
  example, if $\bC_z^\times$ acts on $\bA^1$, then for us the
  character of $\cO_{\bA^1}$ is $1/(1-z)$, whereas for
  \cite{Okounkov2017} it would be $1/(1-z^{-1})$. On the other hand,
  our convention does agree with older works such as \cite{Maulik2006}
  (which develops DT theory).}
Then the minimal resolution $\cA_{m-1}$ has exceptional divisor
consisting of a chain of $m-1$ copies of $\bP^1$, which we denote
$E_1, \ldots, E_{m-1}$, with weights as depicted in
Figure~\ref{fig:toric-diagram-Am-C}.

\begin{figure}[h]
  \centering
  \begin{tikzpicture}[scale=1.7, decoration={markings,mark=at position 0.5 with {\arrow{>}}}]
    \node[vertex] (p0) at (-3,0) {};
    \node[vertex] (p1) at (-2,0.5) {};
    \node[vertex] (p2) at (-1,0.75) {};
    \node (etc) at (0,0.85) {$\cdots$};
    \node[vertex] (pm2) at (1,0.75) {};
    \node[vertex] (pm1) at (2,0.5) {};
    \node[vertex] (pm) at (3,0) {};
    \draw[postaction={decorate}] (-4,-0.75) -- node[label=below:$y^{-m}$]{} (p0);
    \draw[thick,postaction={decorate}] (p0) -- node[label=below:$xy^{-m+1}$]{}node[label=above:$E_1$]{} (p1);
    \draw[thick,postaction={decorate}] (p1) -- node[label=below:$x^2y^{-m+2}$]{}node[label=above:$E_2$]{} (p2);
    \draw[thick,postaction={decorate}] (p2) -- (etc);
    \draw[thick,postaction={decorate}] (etc) -- (pm2);
    \draw[thick,postaction={decorate}] (pm2) -- node[label=below:$x^{m-2}y^{-2}$]{}node[label=above:$E_{m-2}$]{} (pm1);
    \draw[thick,postaction={decorate}] (pm1) -- node[label=below:$x^{m-1}y^{-1}$]{}node[label=above:$E_{m-1}$]{} (pm);
    \draw[postaction={decorate}] (pm) -- node[label=below:$x^m$]{} (4,-0.75);
    \draw[postaction={decorate}] (p0) -- node[label=left:$z$]{} (-3,1);
    \draw[postaction={decorate}] (p1) -- (-2,1.5);
    \draw[postaction={decorate}] (p2) -- (-1,1.75);
    \draw[postaction={decorate}] (pm2) -- (1,1.75);
    \draw[postaction={decorate}] (pm1) -- (2,1.5);
    \draw[postaction={decorate}] (pm) -- node[label=right:$z$]{} (3,1);
  \end{tikzpicture}
  \caption{Toric diagram of $\cA_{m-1} \times \bC$ (exceptional divisor of $\cA_{m-1}$ in bold)}
  \label{fig:toric-diagram-Am-C}
\end{figure}
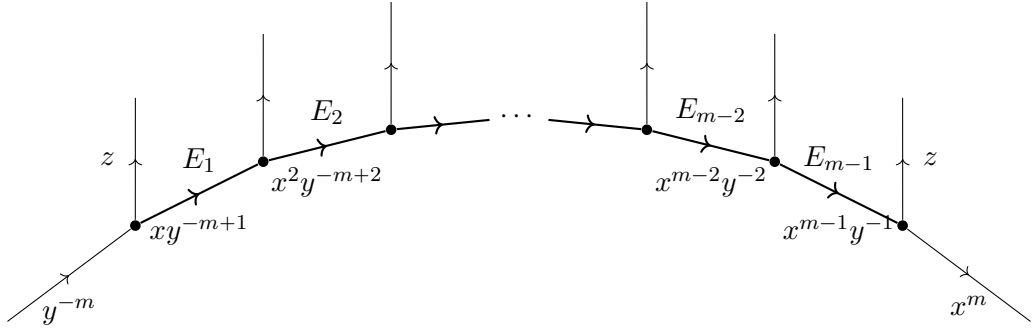

Let $p_0, \ldots, p_m$ denote the $\sT$-fixed points, such that $E_a$
connects $p_{a-1}$ and $p_a$. Let
\begin{align*}
  x_a &\coloneqq x^{a+1}y^{-m+a+1} \\
  y_a &\coloneqq x^{-a}y^{m-a}
\end{align*}
so that $(x_a, y_a, z)$ are coordinates for the toric chart $U_a =
\bC^3$ around $p_a$. 

\subsubsection{}
\label{sec:tori-definitions}

Often, it will be necessary to switch between working on $Y$ and
working on $S$. To prevent confusion, let
\[ \sT' \coloneqq \bC_x^\times \times \bC_y^\times \subset \sT \]
be the torus acting on the $\bC^2$ defining $S$. In general, we will
add a prime to any equivariant object when considering it on $S$
instead of $Y$. For example, let
\[ \sA \coloneqq \{xyz = 1\} \subset \sT \]
be the Calabi--Yau sub-torus. Then
\[ \sA' = \{(t, t^{-1})\} \subset \sT' = \sA' \times \bC_{\hbar}^\times, \]
where we use $\hbar \coloneqq 1/xy$ to denote the weight of the
symplectic form on $S$. Then $\sA'$ preserves the symplectic form on
$S$, and contains the $\Gamma$-action defining $S$.

\subsection{The 1-leg vertex}
\label{sec:bs-vertex}

\subsubsection{}
\label{sec:bs-theory-definition}

Very generally, let $Y_0$ be a quasi-projective $3$-fold with rational
Gorenstein singularities, and let $\pi\colon Y \to Y_0$ be a
resolution of singularities of relative dimension $\le 1$. Let
$\cat{Coh}_{\le i}(Y) \subset \cat{Coh}(Y)$ denote the full
sub-category of coherent sheaves with support in dimension $\le i$.
Associated to $\pi$ is a torsion pair $(\cat{T}, \cat{F})$ in
$\cat{Coh}_{\le 1}(Y)$, given by
\begin{align*}
  \cat{T} &\coloneqq \left\{\cQ \in \cat{Coh}_{\le 1}(Y) \; | \; R\pi_*\cQ \in \cat{Coh}_{\le 0}(Y_0)\right\} \\
  \cat{F} &\coloneqq \left\{\cF \in \cat{Coh}_{\le 1}(Y) \; | \; \Hom(\cQ, \cF) = 0 \text{ for all } \cQ \in \cat{T}\right\} = \cat{T}^\perp.
\end{align*}
A Bryan--Steinberg (BS) pair \cite{Bryan2016} for $\pi$, also called a
$\pi$-stable pair, is a map
\[ s\colon \cO_Y \to \cF \]
such that $\cF \in \cat{F}$ and $\coker(s) \in \cat{T}$. Equivalently,
a $\pi$-stable pair is a short exact sequence
\[ 0 \to \cO_C \to \cF \to \cQ \to 0, \]
where $\cO_C \coloneqq \im(s)$ and $\cQ \coloneqq \coker(s)$. We know
$\im(s)$ is the structure sheaf of some curve $C$ because it is a
quotient of $\cO_Y$ supported in dimension $1$.

\begin{remark}
  Due to \eqref{eq:motivation-ADE-fibration-over-P1}, we only want to
  consider $3$-folds $Y$ of the form $S \times \bP^1$, where the
  surface $S$ contains the rational Gorenstein singularities. For
  surfaces it is known that the only such singularities are of ADE
  type \cite[Theorem 7.5.1]{Ishii2014}.
\end{remark}

\subsubsection{}

Let $\PiPairs(Y)$ denote the moduli of $\pi$-stable pairs on $Y$. When
$Y = S \times \bP^1$, let $D \coloneqq S \times \{\infty\}$ be the
divisor at infinity and consider the open locus
\[ \PiPairs(Y)_{\text{nonsing }\infty} \subset \PiPairs(Y) \]
where the evaluation map
\begin{align*}
  \ev_\infty\colon \PiPairs(Y)_{\text{nonsing }\infty} &\to \Hilb(D, \text{points}) \\
  [\cO_Y \to \cF] &\mapsto \cF\big|_{\infty}
\end{align*}
lands in the Hilbert scheme of {\it points} on $D$, instead of the
Hilbert scheme of {\it curves}.

\begin{definition}
  The {\it BS $1$-leg vertex} is the series
  \[ \sV_{\PiPairs}(Q, \vec A) \coloneqq \sum_{\substack{n \in \bZ\\\beta \in H_2(S, \bZ)}} Q^n \vec A^\beta \ev_{\infty,*}\left(\PiPairs^{n,\beta}, \hat\cO^{\vir}\right) \in K_{\sT}(S)_{\text{localized}}((Q))[[\vec A]]. \]
  The variables $Q$ and $\vec A \coloneqq (A_1, \ldots, A_{m-1})$
  record certain discrete data indexing the connected components
  $\PiPairs^{n,\beta} \subset \PiPairs(Y)_{\text{nonsing }\infty}$:
  \begin{itemize}
  \item the box-counting variable $Q$ records the (possibly negative)
    renormalized volume $n \coloneqq \chi_{\text{normalized}}(\cF)$;
  \item the K\"ahler variables $\vec A$ record the degree $\beta
    \coloneqq \deg(\cF)$ along components $E_1, \ldots, E_{m-1}$ of the
    exceptional divisor.
  \end{itemize}
  This is all the same as in DT or PT theory; see \cite[Section
    4.4]{Maulik2006} for details.
\end{definition}

\subsubsection{}

The moduli $\PiPairs(Y)$ inherits the $\sT$-action on $Y$. For $p
\in \Hilb(S)$, let
\[ \PiPairs_p \coloneqq \ev_\infty^{-1}(p). \]
It has components $\PiPairs_p^{n,\beta}$ consisting of $\pi$-stable
pairs with $\chi_{\text{normalized}} = n$ and $\deg = \beta$. Then the
$\sT$-fixed locus decomposes as
\[ \PiPairs(Y)_{\text{nonsing }\infty}^{\sT} = \bigoplus_{p \in \Hilb(S)^{\sT'}} \bigoplus_{\beta \in H_2(S, \bZ)} \bigoplus_{n \in \bZ} \left(\PiPairs_p^{n,\beta}\right)^{\sT}, \]
and the $p$-th component of the BS $1$-leg vertex is therefore
\begin{equation} \label{eq:bs-vertex-definition}
  \sV^p_{\PiPairs}(Q, \vec A) = \sum_{\beta \in H_2(S, \bZ)} \sum_{n \in \bZ} \chi\left(\PiPairs_p^{n,\beta}, \hat\cO^{\vir}\right) Q^n \vec A^\beta.
\end{equation}
In section~\ref{sec:bs-fixed-loci-structure}, we roughly describe the
fixed loci $(\PiPairs_p^{n,\beta})^{\sT}$ and argue they are proper.
Hence the BS $1$-leg vertex is well-defined via localization. Note
that for a given $\beta$, the renormalized volume $n$ is bounded from
below, and so the coefficients of the power series in $\vec A$ are
Laurent series in $Q$.

\subsubsection{}
\label{sec:bs-vertex-Tvir}

The appropriate tangent-obstruction theory for a $\pi$-stable pair
arises from viewing it as a two-term complex
\[ I^\bullet \coloneqq [\cO_Y \to \cF] \in D^b\cat{Coh}(Y), \]
like for stable pairs \cite[Section 2]{Pandharipande2009}. Then there
is a universal formula
\begin{equation} \label{eq:bs-Tvir}
  \begin{split}
    T^{\vir}_{[I^\bullet]}
    &= \Ext^1(I^\bullet, I^\bullet)_0 - \Ext^2(I^\bullet, I^\bullet)_0 \\
    &= \chi(\cO_Y) - \chi(I^\bullet, I^\bullet)
  \end{split}
\end{equation}
for the virtual tangent space. Here $\chi(-, -) \coloneqq
\sum_i (-1)^i \Ext^i(-, -)$. As in DT or PT theory, at fixed points
$[I^\bullet] \in (\PiPairs)^{\sT}$ the $\sT$-character of $T^{\vir}$
can be obtained via \v Cech cohomology. The computation is identical
in DT/PT/BS theory and we will not repeat it here.

Usually for the vertex of a DT-like theory, one redistributes vertex
and edge contributions so that the $1$-leg vertex $\sV^p$ does not
include the contribution of the infinite leg(s) (see \cite[Section
  4.9]{Maulik2006}). We have not performed such a redistribution in
\eqref{eq:bs-Tvir}; in our setup, for $I^\bullet \in
\PiPairs_{\text{nonsing }\infty}^{\sT}$, the redistribution says to
use
\begin{equation} \label{eq:bs-Tvir-normalization}
  T^{\vir}_{[I^\bullet],\sim} \coloneqq T^{\vir}_{[I^\bullet]} - T_{\ev_\infty(I^\bullet)}\Hilb(S)
\end{equation}
instead of $T^{\vir}_{[I^\bullet]}$. However, this redistribution is
essentially responsible for the renormalization of the Euler
characteristic $n = \chi_{\text{normalized}}(\cF)$, and we {\it do}
perform this renormalization.

\subsection{Boxes and rods}
\label{sec:bs-vertex-boxes}

\subsubsection{}

Since the geometry $Y = \cA_{m-1} \times \bP^1$ is toric, a
$\sT$-fixed $\pi$-stable pair $[\cO_Y \to \cF]$ can be described by
toric data. Namely, $\cF$ can be described as a configuration of boxes
in each toric chart $U_a \subset Y$. This is the combinatorial
approach taken in DT and PT theory as well; see \cite[Section
  4.2]{Maulik2006} and \cite[Section 2]{Pandharipande2009a} for
details.

Our convention for box diagrams is as follows. All box diagrams will
be drawn on the toric skeleton of Figure~\ref{fig:toric-diagram-Am-C}.
A box drawn with smallest $x$-, $y$- and $z$-coordinates $(i,j,k)$ in
the chart $U_a = \bC^3$ indicates that the $\bC[x_a,y_a,z]$-module
$\cF(U_a)$ has an element of weight $\cube = x_a^i y_a^j z^k$. (We
often conflate the coordinates $(i,j,k)$ of a box with its weight
$x_a^i y_a^j z^k$.) Importantly, it is possible for $\cF(U_a)$ to
contain an $m$-dimensional vector subspace of weight $\cube$, where $m
> 1$. In this case, we label the box with the integer $m$, which we
think of as a multiplicity. \footnote{This phenomenon is not new to BS
  theory; the PT $3$-leg vertex is allowed to have certain boxes of
  multiplicity $2$, and this phenomenon is important there because it
  leads to positive-dimensional $\sT$-fixed loci.}

\subsubsection{}

Elements $\cF \in \PiPairs(Y)_{\text{nonsing }\infty}^{\sT}$
correspond to box configurations on the geometry $\cA_{m-1} \times
\bC$ with infinite legs along the $z$ direction, as in
Figure~\ref{fig:bs-fixed-point-generic-example}. These legs are
described by an $m$-tuple of partitions
\[ \vec\lambda = (\lambda_0, \ldots, \lambda_{m-1}) \]
corresponding to a $\sT'$-fixed point in $\Hilb(\cA_{m-1})$. More
precisely, the infinite leg in the chart $U_a$ is the module
\[ L_a \coloneqq \bC[x_a, y_a, z] / I_{\lambda_a} \]
where $I_{\lambda_a}$ is the ideal generated by
$x_a^{i(\square)}y_a^{j(\square)}$ for every $\square \notin \lambda_a$.

\begin{figure}[h]
  \centering
  \begin{tikzpicture}[scale=0.45]
    \draw[thick,->] (0,0) -- (0,6) node[label=left:$z$]{};
    \draw[thick,->] (10,0) -- (10,6) node[label=right:$z$]{};
    \draw[thick] (0,0) -- (10,0);
    \draw[thick,->] (0,0) -- (-2,-2) node[label=left:$y_0$]{};
    \draw[thick,->] (10,0) -- (12,-2) node[label=right:$x_1$]{};

    \draw[fill=gray!40] (-0.5,4) -- (-0.5,-0.5) -- (1.5,-0.5) -- (1.5,4);
    \draw[fill=gray!40] (1.5,4) -- (1.5,-0.5) -- (2,0) -- (2,4.5);
    \foreach \x/\y in {8/0,9.5/-0.5} {
      \draw[fill=gray!40] (\x,\y+4.5) -- (\x,\y) -- (\x+0.5,\y-0.5) -- (\x+0.5,\y+4);
    };
    \foreach \x/\y in {8.5/0,10/-0.5} {
      \draw[fill=gray!40] (\x,\y+4) -- (\x,\y-0.5) -- (\x+1,\y-0.5) -- (\x+1,\y+4);
    };
    \foreach \x in {0,1,2} {
      \draw (\x-0.5,-0.5) -- (\x-0.5,4);
    };
    \draw (2,0) -- (2,4.5);
    \foreach \x/\y in {8/0,9/0,9.5/-0.5,10.5/-0.5} {
      \draw (\x+0.5,\y-0.5) -- (\x+0.5,\y+4);
    };
    \draw (8,0) -- (8,4.5);
    \foreach \y in {0,1,2,3,4} {
      \draw (-0.5,\y-0.5) -- (1.5,\y-0.5) -- (2,\y);
      \draw (8,\y) -- (8.5,\y-0.5) -- (9.5,\y-0.5) -- (10,\y-1) -- (11,\y-1);
    };
    \draw[fill=white,decorate,decoration={snake,segment length=2mm,amplitude=0.5mm}] (-1.3,-1.2) rectangle ++(12.5,2.5);
    \node at (5,0) {???};
    \foreach \x/\y in {-0.5/4,0.5/4,1.5/4,2/4.5,8/4.5,8.5/4,9.5/4,10/3.5,11/3.5} {
      \draw[dashed] (\x,\y) -- (\x,\y+1.5);
    };
    \foreach \x/\y in {-0.5/4,0.5/4} {
      \draw[dotted] (\x,\y) -- ++(0.5,0.5);
    };
    \draw[dotted] (0,4.5) -- ++(2,0);
    \draw[dotted] (9.5,4) -- ++(-0.5,0.5);
    \draw[dotted] (11,3.5) -- ++(-1,1);
    \draw[dotted] (8,4.5) -- ++(2,0);
    \draw[dotted] (9.5,4) -- ++(1,0);
  \end{tikzpicture}
  \caption{An element of $\PiPairs_{(2),(2,1)}(\cA_1)_{\text{nonsing }\infty}$}
  \label{fig:bs-fixed-point-generic-example}
\end{figure}

Among the $\sT$-invariant curves in $Y$, the non-compact (resp.
compact) ones are called {\it external} (resp. {\it internal}) legs in
box configurations. In addition to the legs $L_a$ of weight $z$, there
are two other external legs with weights $x^m$ and $y^m$. In
principle, one can set up the theory so that these external legs can
be non-empty as well. The resulting vertices would be the BS $2$-leg
or $3$-leg vertices. In our setting they must be empty, and hence we
call the resulting series the $1$-leg vertex.

\subsubsection{}

The overarching goal is to characterize the valid box configurations
that can occur in the unspecified region of
Figure~\ref{fig:bs-fixed-point-generic-example}. It is productive to
first understand valid box configurations for $\cF$ and then to
precisely identify the $\cO_Y$-module structure dictating which boxes
generate which other ones. In other words, we first describe $\cF$ as
an element of $K_{\sT}(Y)$, and then as an element of
$\cat{Coh}_{\sT}(Y)$. Since the description is more combinatorially
involved than for DT or PT theory, we outline the main ingredients
here.
\begin{itemize}
\item Definition~\ref{def:rods-terminology} introduces {\it rods} and
  various terminology for them. They are the main new ingredient for
  BS theory, in comparison with DT and PT theory.

\item Lemmas~\ref{lem:bs-rod-degree-lower-bound} and
  \ref{lem:bs-rod-degree-upper-bound} characterize the rods that can
  appear in $\cF$ via the restrictions $\cQ \in \cat{T}$ and $\cF \in
  \cat{T}^\perp$ respectively.

\item Lemma~\ref{lem:bs-boxes-module-structure} describes the way in
  which $\cF(U_a)$ is built from boxes and rods, in a single chart.

\item Proposition~\ref{prop:bs-local-model} describes all possible
  $\cF$, in the case of a single non-trivial external leg $\lambda_a =
  \square$. Such $\cF$ are called {\it local models}, and essentially
  consist of the single leg along with some rods.

\item Proposition~\ref{prop:bs-local-model-filtration} characterizes
  all possible $\cF$ in general, in terms of local models.
\end{itemize}

\subsubsection{}

Let $0 \to \cO_C \to \cF \to \cQ \to 0$ be a $\sT$-fixed element in
$\PiPairs(Y)_{\text{nonsing }\infty}$. The new feature in BS theory
that is not present in DT or PT theories is the possibility of
$1$-dimensional components in $\cQ$. Since $\cQ$ is supported on $E$,
we now introduce all the necessary terminology for box configurations
pertaining to $1$-dimensional sheaves on $E$.

\begin{definition} \label{def:rods-terminology}
  Let $E_c \cong \bP^1$ be the components of the exceptional divisor
  $E$, and let $E_{ab} \coloneqq E_a \cup E_{a+1} \cup \cdots \cup
  E_b$. Let
  \[ \cR \coloneqq \cO_{E_{ab}}(d_a, d_{a+1}, \ldots, d_b) \]
  denote the line bundle on $E_{ab}$ such that $\cR|_{E_c} =
  \cO_{\bP^1}(d_c)$ for each $a \le c \le b$, with trivial gluing at
  nodes.
  \begin{itemize}
  \item A {\it rod} is a connected collection of boxes forming a line
    bundle of the form $\cR$, with any linearization.
  \item The {\it length} of the rod $\cR$ is $b-a+1$, and its degrees
    are $\vec d(\cR) \coloneqq (\deg_{E_c} \cR)_{a \le c \le b}$.
  \item In the chart $U_a$, we say the rod is pointing {\it
    rightward}; analogously, it points {\it leftward} in the chart
    $U_b$. Slightly abusing terminology, we say the leftmost/rightmost
    boxes in the rod {\it generate} the rod.
  \end{itemize}
\end{definition}

\begin{figure}[h]
  \centering
  \begin{tikzpicture}[rod/.style={fill=yellow,fill opacity=0.3}, scale=0.45]
    \draw[thick,->] (0,0) -- (-10,-5) -- (-11,-8) node[label=left:$y_0$]{};
    \draw[thick,->] (0,0) -- (10,-5) -- (11,-8) node[label=right:$x_2$]{};
    \draw[thick,->] (0,0) -- (0,5) node[label=left:$z$]{};
    \draw[thick,->] (-10,-5) -- (-10,0) node[label=left:$z$]{};
    \draw[thick,->] (10,-5) -- (10,0) node[label=right:$z$]{};

    \foreach \x/\y in {-10/-5,-9/-4.5,-8/-4} {
      \draw[rod] (\x,\y) -- ++(1,0.5) -- ++(-1/3,-1) -- ++(-1,-0.5) -- ++(1/3,1);
    };
    \foreach \x/\y in {-7/-3.5,-6/-3} {
      \draw[rod,fill=green] (\x,\y) -- ++(1,0.5) -- ++(-1/3,-1) -- ++(-1,-0.5) -- ++(1/3,1);
    };
    \foreach \x/\y in {-10/-2,-9/-1.5,-8/-1} {
      \draw[dotted] (\x,\y) -- ++(1,0.5) -- ++(-1/3,-1) -- ++(-1,-0.5) -- ++(1/3,1);
      \draw[dashed] (\x-1/3,\y-1) -- ++(0,2);
    };
    \draw[dashed] (-7-1/3,-0.5-1) -- ++(0,2);
    \draw[dashed] (-7,0.5-1) -- ++(0,2);
    \foreach \x in {0,1,2} {
      \foreach \y in {-4,-3,-2} {
        \draw (-10+\x-1/3,\y-1+\x/2) -- ++(1,0.5) -- ++(0,-1) -- ++(-1,-0.5) -- ++(0,1);
      };
    };
    \foreach \x/\y in {0,1,2,3,4} {
      \draw[rod] (-10+\x-1/3,-6+\x/2) -- ++(1,0.5) -- ++(0,-1) -- ++(-1,-0.5) -- ++(0,1);
    };
    \foreach \y in {-3.5,-2.5,-1.5} {
      \draw (-7,\y) -- ++(0,1) -- ++(-1/3,-1);
    };
    \draw[rod,fill=green] (-5,-2.5) -- ++(-1/3,-1) -- ++(7,3.5) -- ++(-5/6,5/12) -- ++(-6,-3);
    \draw[rod] (-5-1/3,-3.5) -- ++(0,-1) -- ++(6+1/6,3+1/12) -- ++(0,1);
    \draw[rod,fill=green] (5/6,-5/12) -- ++(5/6,5/12) -- ++(9,-4.5) -- ++(1/3,-1);
    \draw[rod] (5/6,-5/12) -- ++(0,-1) -- ++(10+1/6,-5-1/12) -- ++(0,1);
    \foreach \x in {3,4} {
      \draw (6+\x,-4-\x/2) -- ++(0,1) -- ++(-1/3,1);
    };
    \draw (-5/6,-5/12) -- ++(5/6,-5/12) -- ++(0,-1);
    \draw (3*5/6,-5/12) -- ++(-5/6,-5/12) -- ++(0,-1);
    \node[rotate=26] at (-2.6,-2.7) {$\cdots$};
    \node[rotate=-26] at (5.3,-3.2) {$\cdots$};
    \draw[very thick,red] (-9-1/3,-6.5) -- (5/6,-1-5/12) -- (11,-6.5);
    \draw (10,-5) -- (11,-5.5);

    \node (b1) at (-8,-8) {$z^{-1}$};
    \draw[->] (b1) to[bend left] (-9.8,-6.3);
    \node (b2) at (-5,-6) {$x_0 z^{-1}$};
    \draw[->] (b2) to[bend left] (-8.8,-5.8);
    \node (b3) at (3,2) {$y_1^{-1}z^{-1}$};
    \draw[->] (b3) to[bend right] (5/6,0);
    \node (b4) at (7,-7) {$x_2^{-1} y_2^{-1} z^{-1}$};
    \draw[->] (b4) to[bend right] (10.4,-5.8);
  \end{tikzpicture}
  \caption{A $\sT$-fixed point in $\PiPairs_{(3),\emptyset,\emptyset}(\cA_2)_{\text{nonsing }\infty}$}
  \label{fig:bs-rod-example}
\end{figure}

In Figure~\ref{fig:bs-rod-example}, the boxes colored yellow form a
rod $\cR$ of length $2$ and degree $\vec d = (1,-1)$ generated by the
box $\cube = z^{-1} \in U_0$. Alternatively, $\cR$ is the equivariant
line bundle $\cO_E(1,-1)$ with linearization
\begin{align*}
  \cR\big|_{p_0} &= z^{-1} \\
  \cR\big|_{p_1} &= y_1^{-1}z^{-1} = xy^{-2}z^{-1} \\
  \cR\big|_{p_2} &= x_2^{-1}y_2^{-1}z^{-1} = x^{-1}y^{-1}z^{-1}.
\end{align*}
To relate weights with degrees, it is helpful to recall that on a
$\bP^1$ whose coordinate around $0$ has weight $w$,
\[ \cO_{\bP^1}(d)\big|_\infty = w^d \cO_{\bP^1}(d)\big|_0. \]
For example, $\cR\big|_{p_2} = x_1^{-1} \cR\big|_{p_1}$ demonstrates
that $\cR$ has degree $-1$ on $E_2$.

\begin{definition}
  A box in a rod $\cR$ is {\it exposed} if it is $z$-torsion, i.e. it
  generates only a finite stack of additional boxes in the $z$
  direction. The exposed part of a rod forms possibly multiple
  disconnected rods.
  \begin{itemize}
  \item A {\it standard rightward rod} is a rod with degrees $\vec d =
    (0, 0, \ldots, 0, -1)$ and non-exposed generator on the left.
  \item A {\it standard leftward rod} is a rod with degrees $\vec d =
    (-1, 0, 0, \ldots, 0)$ and non-exposed generator on the right.
  \end{itemize}
  Standard rods are in some sense the minimal ones in $\cat{T}$; it is
  easiest to work only with standard rods for the combinatorial
  description of general $\cF$.
\end{definition}

In Figure~\ref{fig:bs-rod-example}, the exposed boxes in the rod $\cR$
are shaded green on one face, and form a rod $\cR' = \cO_E(-2,-1)
\subset \cR$ themselves (with appropriate linearization). The rod
$\cR'$ is generated by the box $\cube = x_0^3z^{-1}$. Note that $\cR$
also has the sub-rod generated by $\cube = x_0z^{-1}$. This sub-rod is
underlined in red in the figure, and is a standard rightward rod of
length $2$.

\subsubsection{}

All $0$-dimensional sheaves on $Y$ are in $\cat{T}$, so sheaves in
$\cat{T}^\perp$ are pure of dimension $1$. In particular, $\cF$ and
its subsheaf $\cO_C$ are pure. Consequently, $C$ is Cohen--Macaulay,
as in DT and PT theory. Thus $\cO_C$ contains {\it all} the boxes in
the infinite legs $L_a$, plus possibly some internal legs, and nothing
more. For consistency, we view all internal legs in $\cO_C$ as rods as
well.

While for stable pairs one can show that $C$ and the scheme-theoretic
support $C_{\cF} \coloneqq \supp(\cF)$ coincide \cite[Lemma
  1.6]{Pandharipande2009}, this is very much not true for $\pi$-stable
pairs, and in general
\[ C^{\text{red}} \neq C_{\cF}^{\text{red}}. \]
This is because $C_{\cF}$ receives contributions from $\cQ$, which may
contain rods that are not present in $\cO_C$.

\subsubsection{}
\label{sec:bs-local-model}

Consider the case when all external legs are trivial except for
$\lambda_a = \square$. Then we can give an explicit description of all
possible $\cF$. Importantly, the general case can be reduced to
understanding this special case.

\begin{definition}
  A {\it local model} for $\lambda_a = \square$ is an indecomposable
  sheaf $\cF \in \cat{Coh}_{\sT}(Y)$ which contains all boxes
  \[ \{\cube_k \coloneqq z^k \in U_a\}_{k \ge M} \]
  for some $M \in \bZ$ (possibly with multiplicity), such that $\cF$
  consists only of $\{\cube_k\}_{k \ge M}$ and finitely many rods
  generated by these boxes and:
  \begin{itemize}
  \item if $\cube_k$ generates a rod, it must be a standard left- or
    right-ward rod;
  \item if $\cube_k$ generates both a left- and right-ward rod, then
    $\cube_k$ has multiplicity $2$ and the $\cube_k$-weight space in
    $\cF(U_a)$ is
    \[ [z^k]\cF(U_a) = \bC v_L \oplus \bC v_R, \]
    where $v_L$ generates the left-ward rod and $v_R$ generates the
    right-ward rod;
  \item if $\ell \in \bZ$ is the largest integer such that
    $\cube_\ell$ has multiplicity $2$, then the multiplication-by-$z$
    map is
    \[ [z^\ell]\cF(U_a) \xrightarrow{\begin{psmallmatrix} 1 & 1 \end{psmallmatrix}} [z^{\ell+1}]\cF(U_a). \]
  \end{itemize}
  Let $\level(\cF) \coloneqq \ell$.
\end{definition}

Figure~\ref{fig:bs-local-model} is the box diagram for a general local
model $\cF$; that $\cF$ is a valid $\cO_Y$-module forces the
configuration of standard rods to be increasing in length as the
degree in $z$ increases, as shown. For visual clarity, we have
flattened the toric diagram and all boxes (cf.
Figure~\ref{fig:bs-rod-example}).

\begin{figure}[h]
  \centering
  \begin{tikzpicture}[scale=2.9]
    \foreach \x [count=\i] in {0,...,5} {
      \draw[->,thick] (\x,-1) -- (\x,1);
    };
    \node[above] at (0,1) {$L_0$};
    \node[above] at (1,1) {$L_{a-1}$};
    \node[above] at (2,1) {$L_a$};
    \node[above] at (3,1) {$L_{a+1}$};
    \node[above] at (4,1) {$L_{a+2}$};
    \node[above] at (5,1) {$L_{m-1}$};
    \foreach \x/\n in {0.1/1, 1.1/3, 2/6, 3/4, 4/2} {
      \foreach \y in {1,...,\n} {
        \draw (\x,-0.1*\y) rectangle ++(1,0.1);
        \foreach \cx in {0.1,0.2,0.3,0.7,0.8,0.9} {
          \draw (\x+\cx,-0.1*\y) -- (\x+\cx,-0.1*\y+0.1);
        };
        \node at (\x+0.5,-0.1*\y+0.05) {$\cdots$};
      };
    };
    \draw (2.1,0) -- (2.1,0.8);
    \foreach \y in {0.1,0.2,0.3,...,0.79} {
      \draw (2,\y) -- (2.1,\y);
    };
    \foreach \y in {-0.1,-0.2,-0.3} {
      \node at (2.05,\y+0.05) {\small $2$};
    }
    \fill[color=white] (0.35,-0.5) rectangle (0.65,0.5);
    \fill[color=white] (4.35,-0.5) rectangle (4.65,0.5);
    \node at (0.5,-0.05) {$\cdots$};
    \node at (4.5,-0.1) {$\cdots$};
  \end{tikzpicture}
  \caption{(Flattened) box diagram of a $\sT$-fixed pair with one non-zero leg $\lambda_a = \square$}
  \label{fig:bs-local-model}
\end{figure}
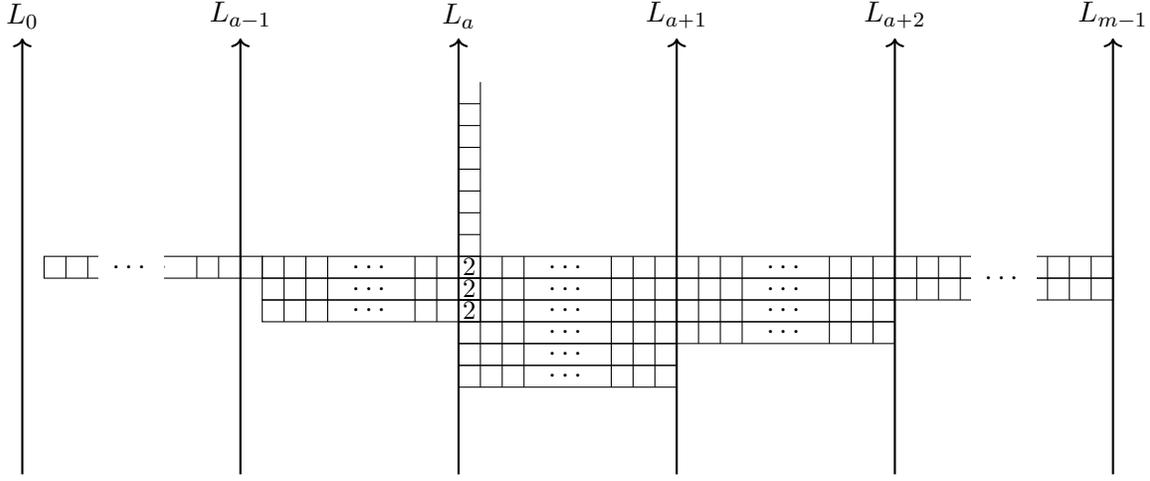

\begin{proposition}[Local model] \label{prop:bs-local-model}
  Let $\cF \in \PiPairs_{\vec\lambda}^{\sT}$ where $\vec\lambda =
  (\cdots, \emptyset, \emptyset, \lambda_a = \square, \emptyset,
  \emptyset, \cdots)$. Then $\cF$ is a local model for $\square \in
  \lambda_a$ with $\level(\cF) < 0$.
\end{proposition}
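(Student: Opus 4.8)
The plan is to read off the box configuration of the $\sT$-fixed sheaf $\cF$ chart by chart and then rigidify its $\cO_Y$-module structure using the two stability constraints $\cQ \in \cat{T}$ and $\cF \in \cat{T}^\perp$. I would begin with two structural consequences of $\cF \in \cat{T}^\perp$. The excerpt already observes that $\cF$ is pure of dimension one; in addition $\cF$ is indecomposable, since in any splitting $\cF = \cF_1 \oplus \cF_2$ the cyclic subsheaf $\cO_C = \im(s)$ lies in a single summand, forcing the other to be a nonzero subsheaf supported on the exceptional locus and hence in $\cat{T}$, contradicting $\cF \in \cat{T}^\perp$. Because $\lambda_a = \square$ is the unique nontrivial external leg, the only noncompact component of $\supp\cF$ is the $z$-axis through $p_a$, so in the chart $U_a$ the sheaf contains the semi-infinite column $\{\cube_k = z^k\}_{k \ge M}$ for some $M \in \bZ$, and by Lemma~\ref{lem:bs-boxes-module-structure} every further box of $\cF$ lies in a rod generated by one of these column boxes.

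Next I would classify the rods. For each rod hanging off the column, Lemma~\ref{lem:bs-rod-degree-lower-bound} bounds its degree vector from below (via $\cQ \in \cat{T}$) and Lemma~\ref{lem:bs-rod-degree-upper-bound} bounds it from above (via $\cF \in \cat{T}^\perp$); together these pin the degree to be exactly that of a standard leftward or rightward rod in the sense of Definition~\ref{def:rods-terminology}, with non-exposed generator the column box itself (that being the unique box of the rod on the $z$-axis through $p_a$). A column box $\cube_k$ can carry a rod on each side, and when it carries both, the weight-$z^k$ space $[z^k]\cF(U_a)$ must be two-dimensional, spanned by a generator $v_L$ of the leftward rod and a generator $v_R$ of the rightward rod; this is the multiplicity-two box.

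The crux is the multiplication-by-$z$ map at the top multiplicity-two level, together with the bound on that level. Let $\ell$ be the largest level carrying a multiplicity-two box, so level $\ell+1$ carries a single column box $w$ and multiplication by $z$ induces $[z^\ell]\cF(U_a) = \bC v_L \oplus \bC v_R \to \bC w$. Neither $v_L$ nor $v_R$ can be exposed: if, say, $z v_R = 0$, then $v_R$ would generate a $z$-torsion rod supported on $E$, i.e. a subsheaf of $\cF$ lying in $\cat{T}$, contradicting $\cF \in \cat{T}^\perp$. Hence both images are nonzero, and after rescaling $w$ the map is $\begin{psmallmatrix} 1 & 1 \end{psmallmatrix}$, exactly as the definition of a local model demands. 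Finally $\level(\cF) = \ell < 0$ follows by combining nonsingular-at-$\infty$ with the renormalization: the latter identifies the reduced column $\{\cube_k\}_{k \ge 0}$ with $\cO_C = \im(s)$, and the former forbids the column from acquiring extra generators or rods for $k \ge 0$, since otherwise $\ev_\infty(\cF) = \cF|_\infty$ would fail to land in $\Hilb(D, \text{points})$; thus every multiplicity-two box sits strictly below level zero.

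I expect the main obstacle to be the rod classification of the middle step: simultaneously imposing $\cQ \in \cat{T}$ and $\cF \in \cat{T}^\perp$ on a $\sT$-fixed module whose graded $z$-pieces change dimension, and extracting the exact standard-rod shapes together with the $\begin{psmallmatrix} 1 & 1 \end{psmallmatrix}$ gluing rather than a generic surjection. This is precisely what Lemmas~\ref{lem:bs-rod-degree-lower-bound} and \ref{lem:bs-rod-degree-upper-bound} are designed to control; once the local module is constrained as above, identifying it with a local model of level $\ell < 0$ is bookkeeping.
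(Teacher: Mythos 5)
Your middle steps track the paper's proof closely and are sound: purity plus Lemma~\ref{lem:bs-boxes-module-structure} gives the column-plus-rods picture, Lemmas~\ref{lem:bs-rod-degree-lower-bound} and \ref{lem:bs-rod-degree-upper-bound} squeeze every rod to be standard, and your derivation of the $\begin{psmallmatrix} 1 & 1 \end{psmallmatrix}$ gluing is correct and even more explicit than the paper's (a $z$-torsion standard rod has $H^1 = 0$, hence is a subsheaf of $\cF$ lying in $\cat{T}$, contradicting $\cF \in \cat{T}^\perp$). However, two of your justifications rest on a misconception: membership in $\cat{T}$ is \emph{not} the same as being supported on the exceptional locus. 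In your indecomposability argument, ``supported on the exceptional locus and hence in $\cat{T}$'' is false --- for instance $\cO_{E_1}(-2)$ is supported on $E$ yet lies in $\cat{T}^\perp$, since any map to it from $\cQ' \in \cat{T}$ has image that is simultaneously a quotient of $\cQ'$ (hence in $\cat{T}$) and a subsheaf $\cO_{E_1}(d)$ with $d \le -2$ (hence with $H^1 \neq 0$, not in $\cat{T}$). The correct reason a complementary summand $\cF_2$ is impossible is that it would be a direct summand of $\cQ = \coker(s) \in \cat{T}$, and torsion classes are closed under quotients; then $\Hom(\cF_2, \cF) \neq 0$ contradicts $\cF \in \cat{T}^\perp$.

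The genuine gap is your final step, $\level(\cF) < 0$, which you derive from nonsingularity at $\infty$. That condition cannot do this job: rods are $z$-torsion and supported on $E \times \{0\}$, so they are invisible to $\ev_\infty$. A configuration with a standard rod attached to the column box $z^{5}$ restricts on $D = S \times \{\infty\}$ to exactly the same point of $\Hilb(D, \text{points})$ as the bare leg, so it passes your test while violating the proposition. What actually rules it out is the pair structure, which is what the paper's proof invokes: since $s$ is $\sT$-equivariant, $s(1)$ has trivial weight, so $\cO_C = \im(s)$ (with $C = \{p_a\} \times \bC_z$) fills the column boxes $z^k$, $k \ge 0$, each with multiplicity one. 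If a rod were generated at a level $k \ge 0$, then in $\cQ = \cF/\cO_C$ its generator box is removed (or, at a multiplicity-two box, the images of the leftward and rightward generators become proportional, gluing the two rods together); either way $\cQ$ contains a rod of degree $(-1, 0, \ldots, 0, -1)$, or $(-2)$ in length one, violating Lemma~\ref{lem:bs-rod-degree-lower-bound}, i.e.\ $\cQ \notin \cat{T}$. This is precisely the content of the paper's closing sentence that the restriction on $\level(\cF)$ arises ``because there must be an inclusion $\cO_C \hookrightarrow \cF$''; the evaluation-map/renormalization reasoning you give does not substitute for it.
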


For readers familiar with PT theory, the allowed box configurations
for $\cF$ are the same as for the PT $1$-leg vertex for $\lambda =
\square$, except now boxes are allowed to emit standard
left-/right-ward rods in such a way that $\cF$ remains a valid
$\cO_Y$-module. When a box emits {\it both} left- and right-ward rods,
we will see that it necessarily gains a multiplicity.

\subsubsection{}

To begin the proof of Proposition~\ref{prop:bs-local-model}, it is
important to characterize the degrees/lengths of rods that can appear
in $\cQ$ and therefore in $\cF$. Let $\cQ \in \cat{T}$ be $\sT$-fixed,
and let $\cR$ be a rod in $\cQ$. The condition that $\cQ \in \cat{T}$
means that $H^1(E, \cR) = 0$.

\begin{lemma} \label{lem:bs-rod-degree-lower-bound}
  Let $\vec d(\cR) = (d_a, d_{a+1}, \ldots, d_b)$. Then $H^1(E, \cR) =
  0$ iff all $d_c \ge -1$ with equality for at most one $d_c$.
\end{lemma}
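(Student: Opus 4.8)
The plan is to compute $H^1(E,\cR)$ through the partial normalization of the nodal chain $E_{ab}$, reducing the vanishing to a combinatorial surjectivity statement. Let $\nu\colon \tilde E = \bigsqcup_{c=a}^{b} E_c \to E_{ab}$ be the normalization, so that $\nu^*\cR = \bigoplus_c \cO_{\bP^1}(d_c)$, and write $q_c \coloneqq E_c \cap E_{c+1}$ for the $c$-th node. Since the gluing is trivial, there is a short exact sequence of sheaves on $E$
\[ 0 \to \cR \to \nu_*\nu^*\cR \xrightarrow{\;\delta\;} \bigoplus_{c=a}^{b-1} \bC_{q_c} \to 0, \]
where $\delta$ records at each $q_c$ the difference of the two adjacent branch values. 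Taking cohomology yields the long exact sequence
\[ 0 \to H^0(E,\cR) \to \bigoplus_c H^0(\cO(d_c)) \xrightarrow{\;\alpha\;} \bigoplus_{c=a}^{b-1} \bC \to H^1(E,\cR) \to \bigoplus_c H^1(\cO(d_c)) \to 0. \]

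First I would extract the condition ``all $d_c \ge -1$.'' Because $H^1(\bP^1,\cO(d_c)) = 0$ precisely when $d_c \ge -1$, the rightmost term vanishes exactly under this hypothesis, which is therefore necessary for $H^1(E,\cR) = 0$. Granting it, the sequence identifies $H^1(E,\cR) = \coker(\alpha)$, so the entire question becomes: when is the node-difference map $\alpha$ surjective?

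Next I would analyze $\alpha$ combinatorially by dualizing. A covector $\phi = (\phi_a,\dots,\phi_{b-1})$ annihilating $\im(\alpha)$ is constrained component by component: a degree-$(-1)$ component contributes no section and so imposes \emph{no} constraint, effectively severing the chain at that spot; a degree-$0$ component has only constant sections and hence \emph{links} its two adjacent node-values by an equality $\phi_{c-1} = \phi_c$; and a boundary component carrying a section with nonzero value at its unique node \emph{pins} that end to zero. (In the regime relevant to rods in $\cat{T}$ the degrees are non-positive, so interior components only link or sever.) I would then argue that the severings are the sole source of slack: with at most one severing, every equality-class of node-values is still connected to a pinned end, forcing $\phi = 0$ and hence $\alpha$ surjective, so $H^1(E,\cR) = 0$; with two severings, the sub-chain of nodes strictly between them is isolated from both ends, and a nonzero constant covector survives on it, giving $\coker(\alpha) \neq 0$. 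Exhibiting this surviving covector explicitly, and dually a section $s$ with $\delta(s) \notin \im(\alpha)$, supplies the converse direction.

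I expect this surjectivity analysis to be the main obstacle, since it requires a careful propagation argument along the one-dimensional node graph that tracks whether each degree-$(-1)$ component sits at an endpoint or in the interior and how the equality-links chain the pinned boundary values together. Once the bookkeeping is organized this way, both directions of the biconditional follow, and the consistency check $\dim H^0(E,\cR) - \dim H^1(E,\cR) = \chi(E,\cR) = \sum_c d_c + 1$ confirms the dimension count is routine.
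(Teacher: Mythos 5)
Your proposal follows exactly the paper's route: the paper's entire proof consists of the normalization sequence $0 \to \cR \to \bigoplus_c \cR_c \to \cP \to 0$ plus ``a standard argument using the associated long exact sequence,'' and your write-up is a filling-in of precisely that argument. The identification of $H^1(E,\cR)$ with $\coker(\alpha)$ once all $d_c \ge -1$, and the dual analysis of the node-difference map (a degree $-1$ component severs, a degree $0$ component links, a boundary component of degree $\ge 0$ pins), is the bookkeeping the paper leaves implicit; your forward direction (at most one $d_c = -1$ forces $\phi = 0$, hence surjectivity) is correct for arbitrary degrees.

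One point deserves emphasis: your parenthetical restriction to non-positive degrees is not a convenience --- it is load-bearing for the converse. An interior component of degree $\ge 1$ neither links nor severs; its sections separate the two adjacent nodes, so it pins both node-values to zero. Consequently, for degrees $(-1,1,-1)$ the map $\alpha$ is onto (evaluation $H^0(\cO_{\bP^1}(1)) \to \bC_{q_1} \oplus \bC_{q_2}$ is an isomorphism, since a section of $\cO(1)$ vanishing at two points is zero), and $H^1(E,\cR) = 0$ even though two of the $d_c$ equal $-1$. So the ``only if'' direction of the lemma, read literally for arbitrary $\vec d$, is false; it becomes true exactly in the regime $d_c \le 0$, which is the only regime in which the paper ever invokes it (standard rods, exposed rods, and rods generated by a single box all have non-positive degrees). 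Your instinct to flag this regime is therefore correct and necessary; to make the argument airtight you should promote that parenthetical to a standing hypothesis of the statement, at which point your proof is complete --- and in this respect more careful than the paper's own sketch, which does not acknowledge the issue.
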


\begin{proof}
  The exceptional divisor $E$ is a nodal chain of its components $E_c
  \cong \bP^1$. Via normalization, the rod $\cR$ fits into an exact
  sequence
  \begin{equation}
    0 \to \cR \to \bigoplus_{c=a}^b \cR_c \to \cP \to 0
  \end{equation}
  where $\cR_c \cong \cO(d_c)$ is supported only on $E_c \cong \bP^1$
  and $\cP$ is supported only at the nodes $p_a$. The desired result
  follows from a standard argument using the associated long exact
  sequence.
\end{proof}

\subsubsection{}

The condition that $\cF \in \cat{T}^\perp$ also imposes a restriction
on the lengths of rods. This is because no subsheaf $\cE$ of an
exposed rod can be an element of $\cat{T}$. Otherwise $\cF$ would
contain the subsheaf generated by $\cE$, contradicting $\cF \in
\cat{T}^\perp$.

\begin{lemma} \label{lem:bs-rod-degree-upper-bound}
  Let $\cE$ be an exposed rod of length $\ell$. Then it is a subsheaf
  of the maximal exposed rod $\cE_{\text{max},\ell}$ of length $\ell$,
  where:
  \begin{itemize}
  \item if $\ell > 1$, then $\vec d(\cE_{\text{max},\ell}) = (-1, \,
    0, 0, \ldots, 0, 0, -1)$;
  \item if $\ell = 1$, then $\vec d(\cE_{\text{max},\ell}) = (-2)$.
  \end{itemize}
\end{lemma}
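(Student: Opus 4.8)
The plan is to read off the bound directly from the defining condition $\cF \in \cat{T}^\perp$. By definition of the torsion pair, $\Hom(\cQ, \cF) = 0$ for every $\cQ \in \cat{T}$, so $\cF$ admits \emph{no} nonzero subsheaf lying in $\cat{T}$ (an inclusion of such a subsheaf would be a nonzero map from a $\cat{T}$-object). Since $\cE$ is exposed, hence $z$-torsion, its top $z$-slice---the sections annihilated by the last power of $z$---is a genuine $\cO_Y$-subsheaf of $\cF$ (not merely a subquotient), and it is a line bundle on $E_{ab}$ with the same degree vector $\vec d(\cE) = (d_a, \ldots, d_b)$ as $\cE$. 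So it suffices to show that these degrees are bounded as claimed, and the governing principle is that no subsheaf of $\cE$ may lie in $\cat{T}$.

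I would extract the bounds by testing $\cE$ against its subsheaves supported on a single component $E_c$. The maximal subsheaf of $\cE$ set-theoretically supported on $E_c$ is cut out by requiring sections to vanish at each node of $E_{ab}$ adjacent to $E_c$; with the trivial gluing of Definition~\ref{def:rods-terminology}, vanishing at a node drops the degree by one, so this subsheaf is $\cO_{E_c}(d_c - \nu_c)$, where $\nu_c \in \{0,1,2\}$ is the number of nodes of $E_{ab}$ meeting $E_c$. By Lemma~\ref{lem:bs-rod-degree-lower-bound} in the one-component case (equivalently $H^1(\bP^1, \cO(e)) = 0 \iff e \ge -1$), this subsheaf lies in $\cat{T}$ as soon as $d_c - \nu_c \ge -1$. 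Forbidding a $\cat{T}$-subsheaf supported on $E_c$ therefore forces $d_c \le \nu_c - 2$ for every $c$.

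It then remains to read off $\nu_c$ from the chain. For $\ell > 1$ the two end components $E_a, E_b$ have $\nu = 1$, forcing $d_a, d_b \le -1$, while each interior component has $\nu = 2$, forcing $d_c \le 0$; for $\ell = 1$ there are no internal nodes, so $\nu_a = 0$ and $d_a \le -2$. These are precisely the degrees of $\cE_{\text{max},\ell}$. Finally, since a degree-wise inequality upgrades to an inclusion of sheaves---given $d_c \le d_c'$ for all $c$, twisting down $\cO_{E_{ab}}(\vec d\,')$ by an effective divisor of interior points (chosen away from the nodes) exhibits $\cO_{E_{ab}}(\vec d) \hookrightarrow \cO_{E_{ab}}(\vec d\,')$---the degree bounds produce the desired inclusion $\cE \hookrightarrow \cE_{\text{max},\ell}$.

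The step I expect to be the main obstacle is the first one: confirming that the tested subsheaves of the exposed rod are honestly subsheaves of $\cF$, so that $\cat{T}^\perp$ applies. This rests on exposedness, i.e. that $z$-torsion produces a top slice which is $z$-annihilated and hence a true $\cO_Y$-submodule, together with the node bookkeeping that yields the degree drop $\nu_c$ on the nodal chain (and the check that this maximal single-component subsheaf, rather than the whole rod, is what gives the sharp constraint). Everything else is the classical cohomology of line bundles on $\bP^1$ packaged through Lemma~\ref{lem:bs-rod-degree-lower-bound}.
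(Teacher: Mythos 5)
Your reduction of the lemma to the principle that $\cF$ admits no nonzero subsheaf lying in $\cat{T}$, together with the single-component bookkeeping (testing the maximal subsheaf supported on each $E_c$, whose degree drops by the number $\nu_c$ of adjacent nodes, against the $H^1$-criterion of Lemma~\ref{lem:bs-rod-degree-lower-bound}), is exactly the skeleton of the paper's argument, and the final upgrade from componentwise degree bounds to an inclusion $\cE \hookrightarrow \cE_{\text{max},\ell}$ is fine. The gap is at the step you yourself flagged as the main obstacle: the claim that the top $z$-slice of $\cE$ is a genuine $\cO_Y$-subsheaf of $\cF$ \emph{which is a line bundle on all of $E_{ab}$ with the same degree vector $\vec d(\cE)$}. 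Exposedness only says that each box of $\cE$ generates a \emph{finite} stack of additional boxes in the $z$-direction; it does not say these stacks have uniform height along the rod. When the heights vary, the $z$-annihilated part of the submodule generated by $\cE$ splits into pieces sitting at different $z$-levels, supported on proper subchains, with degrees different from $\vec d(\cE)$. For instance, for a length-two exposed rod $\cE = \cO_{E_{ab}}(d_a, d_b)$ with a height-two stack over $E_a$ and nothing above $E_b$, the top slice is $\cO_{E_a}(d_a) \oplus \cO_{E_b}(d_b - 1)$ (the second summand twisted down at the node, being the kernel of $z$ on $\cE$), not $\cO_{E_{ab}}(d_a, d_b)$; so the sheaf you test against $\cat{T}$ is not the one you claimed, and the $\nu_c$-bookkeeping is carried out on the wrong degrees. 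Nor can you simply assume the stacks are uniform: that structural fact (monotonicity of rod lengths in $z$) is established only later, in Proposition~\ref{prop:bs-local-model}, whose proof relies on this very lemma, so invoking it here would be circular.

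The paper's mechanism avoids this entirely: it never asks a subsheaf of the exposed rod to sit literally inside $\cF$. If some subsheaf $\cE' \subset \cE$ lay in $\cat{T}$, one passes to the $\cO_Y$-submodule of $\cF$ \emph{generated} by $\cE'$; this submodule is filtered by powers of $z$, with graded pieces that are quotients of ($z$-weight twists of) $\cE'$, hence it lies in $\cat{T}$ because $\cat{T}$ is a torsion class, closed under quotients and extensions. That nonzero $\cat{T}$-subsheaf of $\cF$ contradicts $\cF \in \cat{T}^\perp$ directly, for stacks of arbitrary and varying heights. Your argument can be repaired either by quoting this closure property in place of the top-slice claim, or by an induction on stack height (the rod $z\cE$ directly above $\cE$ is a quotient line bundle with the same degrees on a subchain, so the bounds can be propagated downward level by level), but as written the top-slice step fails.
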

  
\begin{proof}
  Follows from the above discussion and
  Lemma~\ref{lem:bs-rod-degree-lower-bound} describing elements of
  $\cat{T}$. The maximal exposed rods $\cE_{\text{max},\ell}$ those
  which are maximal under inclusion, i.e. such that increasing the
  degree on any component creates a sub-sheaf in $\cat{T}$.
\end{proof}

When a sheaf $\cF$ is pure of dimension $1$, violating this lemma is
the only way it can fail to be in $\cat{T}^\perp$. Note that some care
must be taken when there are rods consisting of boxes with
multiplicity. The following example shows that, in general, there
cannot be two rods which start at the same box.

\begin{example} \label{ex:bs-exposed-rod-multiplicity}
  Figure~\ref{fig:bs-exposed-rod-unstable} contains an exposed rod of
  degree $-1$, not just exposed rods of degree $-2$. One way to see
  this exposed rod is to write down the vector space at each box along
  with the relevant module maps. In this case, the $\cO_Y$-module
  structure is uniquely determined by the box configuration: one can
  always rescale the $\bC^2$ in the rod so that the map $\bC^2 \to
  \bC$ is as specified. Then the exposed rod of degree $-1$ is
  generated by the vector $\begin{psmallmatrix} 1
    \\ -1 \end{psmallmatrix}$ at each $\bC^2$.
\end{example}

\begin{figure}[h]
  \centering
  \begin{subfigure}[b]{0.4\textwidth}
    \centering
    \begin{tikzpicture}[scale=0.45]
      \draw[thick,->] (0,0) -- (0,4);
      \draw[thick,->] (10,0) -- (10,4);
      \draw[thick] (0,0) -- (10,0);
      \draw[thick,->] (0,0) -- (-1,-1);
      \draw[thick,->] (10,0) -- (11,-1);
      
      \draw (-0.5,-1.5) rectangle (0.5,2.5);
      \draw (-0.5,-1.5) rectangle (9.5,-0.5);
      \draw (1,0) -- (1,3);
      \foreach \x/\y in {1/0, 2/0, 3/0, 1/1, 1/2, 1/3, 0/3} {
        \draw (\x,\y) -- (\x-0.5,\y-0.5);
      };
      \foreach \x/\y in {7/0, 8/0, 9/0} {
        \draw (\x,\y) -- (\x+0.5,\y-0.5);
      };
      \foreach \x in {1.5,2.5,7.5,8.5} {
        \draw (\x,-0.5) -- (\x,-1.5);
      };
      \foreach \x/\y in {0.5/0.5, 0.5/1.5, 1/3} {
        \draw (\x,\y) -- (\x-1,\y);
      };
      \foreach \x in {0,1,2,8,9} {
        \node at (\x,-1) {$2$};
      }
      \node at (5,-1) {$\cdots$};
    \end{tikzpicture}
    \caption{Box diagram}
  \end{subfigure}%
  \hspace{1em}%
  \begin{subfigure}[b]{0.5\textwidth}
    \centering
    \[ \begin{tikzcd}[row sep=small, column sep=small, ampersand replacement=\&]
      \vdots \\
      \bC \ar{u} \\
      \bC^2 \ar{u}{\begin{psmallmatrix} 1 & 1 \end{psmallmatrix}} \ar{r} \& \bC^2 \ar{r} \& \cdots \& \bC^2 \ar{l} \& \bC^2 \ar{l}
    \end{tikzcd}. \]
    \vspace{-4mm}
    \caption{$\cO_Y$-module structure}
  \end{subfigure}
  \caption{Non-$\pi$-stable: contains a degree $-1$ exposed rod}
  \label{fig:bs-exposed-rod-unstable}
\end{figure}
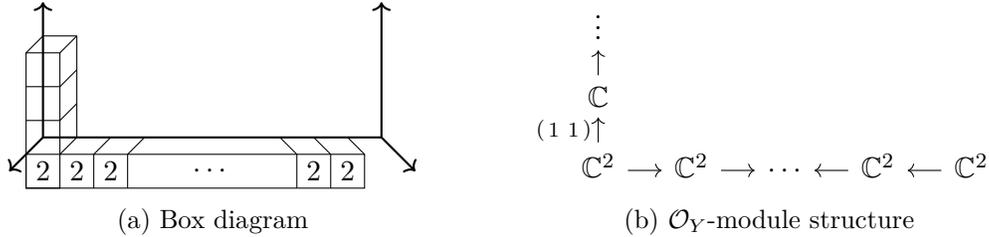

\subsubsection{}
\label{sec:bs-boxes-rods-assembly}

The next step is to understand the manner in which $\cF$ is built from
boxes and rods. The following lemma shows that, aside from the
presence of rods, $\cF(U_a)$ is essentially a PT $1$-leg vertex. Let
$\cF^\rb \subset \cF$ be the subsheaf consisting only of boxes which
are not part of rods, and similarly for $\cQ^\rb$. For short, let
$\cF_a$ denote $\cF(U_a)$ and similarly for other sheaves.

\begin{lemma} \label{lem:bs-boxes-module-structure}
  In the chart $U_a$, the module $\cF^\rb(U_a)$ is a $\sT$-invariant
  sub-module of the localization
  \[ M_a \coloneqq (L_a)_z = \bC[x_a, y_a, z, z^{-1}]/I_{\lambda_a} \]
  such that $z^n L_a \subset \cF^\rb(U_a)$ for $n \gg 0$.
\end{lemma}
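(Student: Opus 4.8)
The plan is to realize $\cF^\rb(U_a)$ inside $M_a$ by inverting $z$, and to read off both the submodule property and the cofiniteness bound $z^n L_a \subseteq \cF^\rb(U_a)$ from what survives this localization. The starting point is that $\cF \in \cat{T}^\perp$ is pure of dimension $1$, hence so is its subsheaf $\cF^\rb$; in particular $\cF^\rb$ has no $0$-dimensional subsheaves. First I would observe that $\cF^\rb(U_a)$ therefore has no $z$-torsion: a $z$-torsion element generates only a finite stack of boxes in the $z$-direction, and since it is not part of a rod it cannot extend in the exceptional $x_a$- or $y_a$-directions either, so it would span a $0$-dimensional subsheaf. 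Consequently the localization map $\cF^\rb(U_a) \hookrightarrow (\cF^\rb(U_a))_z$ is injective and realizes $\cF^\rb(U_a)$ as a $\bC[x_a,y_a,z]$-submodule of its $z$-localization.

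Next I would identify $(\cF^\rb(U_a))_z$ with $M_a$. Inverting $z$ retains exactly the box-columns that are infinite in the $z$-direction, and these all have profile $(i,j) \in \lambda_a$. Indeed, any box of $\cF$ with $(i,j) \notin \lambda_a$ lies in the exceptional region; being part of a pure $1$-dimensional sheaf supported there, the subsheaf it generates is a line bundle on some $E_{ab}$, i.e.\ a rod, and is hence excluded from $\cF^\rb$ by definition. Since the only nontrivial external leg is $\lambda_a$, the infinite columns of $\cF^\rb(U_a)$ are precisely those at profiles in $\lambda_a$, and each such column is nonempty far out in $z$; therefore $(\cF^\rb(U_a))_z = M_a$, with the module structure in which any multiplication leaving $\lambda_a$ becomes $0$. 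For the cofiniteness bound, the nonsingular-at-$\infty$ hypothesis forces $\cF$ to agree with the leg $L_a$ in $z$-degree $\gg 0$, and these far-out leg boxes belong to no rod, so $z^n L_a \subseteq \cF^\rb(U_a)$ for $n \gg 0$. Together with $\sT$-invariance, which is automatic at a $\sT$-fixed point, this proves the lemma.

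The step I expect to be the main obstacle is the claim that every box of $\cF$ outside the profile $\lambda_a$ is part of a rod, i.e.\ that a pure $1$-dimensional $\sT$-fixed sheaf supported on the exceptional locus decomposes into line bundles on the chains $E_{ab}$. This requires ruling out more exotic indecomposable $\sT$-equivariant sheaves on the nodal chain $E$ and correctly bookkeeping boxes of multiplicity $2$, whose left- and right-ward rod contributions must be separated so that the residual non-rod part still embeds into $M_a$. A related subtlety is that $\cF^\rb$ and $\cF$ may differ by non-exposed rod boxes lying below the leg, as in Figure~\ref{fig:bs-rod-example}, where the rod generator $z^{-1}$ sits beneath $L_0$; one must check that deleting these does not change the $z$-localization, which holds because they lie in columns already present in $\cF^\rb$.
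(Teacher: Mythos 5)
Your argument is correct, and it shares only its first step with the paper's proof: both establish that $\cF^\rb(U_a)$ is $z$-torsion-free from purity (a box not belonging to a rod is torsion in the $x_a$, $y_a$ directions, so $z$-torsion as well would make it generate a $0$-dimensional submodule, contradicting $\cF \in \cat{T}^\perp$). After that the routes genuinely diverge. The paper never localizes: it embeds $\cF^\rb_a$ by a PT-style reflexive-hull argument, observing that the composition $\cF^\rb_a \hookrightarrow \cF_a \to \Hom\left(\Hom(\cF_a, L_a), L_a\right)$ is generically an isomorphism (hence injective by purity), then applying $\Hom(-,L_a)$ to $0 \to (\cO_C)_a \to \cF_a \to \cQ_a \to 0$ and using that $\cQ_a$ is $z$-torsion to identify $\Hom(\cF_a,L_a)$ with an ideal $I_Z \subset L_a$ cutting out a zero-dimensional $Z$, so that $\cF^\rb_a \subset \Hom(I_Z,L_a) \subset M_a$. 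Your direct identification $(\cF^\rb(U_a))_z \cong M_a$ is more elementary and makes the role of the nonsingular-at-$\infty$ hypothesis transparent: the colimit of each column under multiplication by $z$ is its large-$z$ value, which is one-dimensional exactly on profiles in $\lambda_a$ (rod boxes occupy only finitely many $z$-levels, so discarding them does not change this). What the paper's route buys is a sharper normal form for the image, namely $\Hom(I_Z, L_a)$ for an explicit zero-dimensional $Z$, which is the shape of the statement familiar from PT theory. One correction to your own assessment: the step you flag as the main obstacle is not needed in the strength you state it. You do not need a box with profile outside $\lambda_a$ to generate a line bundle on some $E_{ab}$ (and that claim is indeed unjustified as written); you only need that a box not part of a rod is torsion in the surface directions --- the same essentially definitional assertion the paper relies on --- since then nonsingularity at $\infty$ (columns outside $\lambda_a$ die in large $z$-degree) together with purity already excludes such boxes from $\cF^\rb_a$, with no classification of $\sT$-equivariant sheaves on the nodal chain required. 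The multiplicity-$2$ bookkeeping you raise is a real subtlety, but it is glossed over equally by the paper's proof, so it does not count against your version.
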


\begin{proof}
  The key observation is that $\cF^{\rb}_a$ is $z$-torsion-free. This
  is because boxes in $\cF^{\rb}_a$ are necessarily torsion in the
  $x_a$ and $y_a$ directions, so if a box were also $z$-torsion then
  $\cF$ would fail to be pure of dimension $1$.

  The only way for a box $\cube \in \cF_a$ to be $z$-torsion-free is
  to have $z^n \cdot \cube \in L_a$ for $n \gg 0$. Hence the
  composition
  \[ \cF^\rb_a \hookrightarrow \cF_a \to \Hom\left(\Hom(\cF_a, L_a), L_a\right) \]
  is generically an isomorphism, and therefore an inclusion by the
  purity of $\cF^{\rb}_a$. To compute $\Hom\left(\Hom(\cF_a, L_a),
  L_a\right)$, apply $\Hom(-, L_a)$ to $0 \to (\cO_C)_a \to \cF_a \to
  \cQ_a \to 0$. Since everything in $\cQ_a$ is $z$-torsion, this
  yields an inclusion
  \[ 0 \to \Hom(\cF_a, L_a) \to \Hom((\cO_C)_a, L_a) = L_a. \]
  So $\Hom(\cF_a, L_a)$ is an ideal $I_Z \subset L_a$. The subscheme
  $Z$ it cuts out must be zero-dimensional since $\supp \cQ^\rb_a$ is.
  Thus $\Hom(I_Z, L_a)$ is a sub-module of $M_a$ satisfying the
  specified criterion.
\end{proof}

\begin{remark}
  Let $\cF^z$ be the quotient of $\cF$ consisting of boxes which are
  $z$-torsion-free. Then there is a short exact sequence
  \[ 0 \to \cF^\rb \to \cF^z \to \cF^{z,\rr} \to 0, \]
  where $\cF^{z,\rr}$ consists of $z$-torsion-free boxes which are
  part of rods. Every rod must contain some boxes which are
  $z$-torsion-free, otherwise the sub-sheaf of $\cF$ generated by the
  pre-image of the rod is also an element in $\cat{T}$, contradicting
  $\cF \in \cat{T}^\perp$.

  As with boxes in $\cF^b$, boxes $\cube \in \cF^{z,\rr}_a$ must have
  $z^n \cdot \cube \in L_a$ for $n \gg 0$. However, $\cF^z_a$ is not
  necessarily a sub-module of $M_a$; the proof of
  Lemma~\ref{lem:bs-boxes-module-structure} fails because $\cF^z$ is
  not a subsheaf of $\cF$. This therefore allows for boxes with
  multiplicity to occur in $\cF^{z,\rr}$.
\end{remark}

\subsubsection{}
\label{sec:bs-local-model-proof}

We can now give the proof of Proposition~\ref{prop:bs-local-model},
which describes all possible local models $\cF$.

\begin{proof}[of Proposition~\ref{prop:bs-local-model}]
  By Lemmas~\ref{lem:bs-rod-degree-lower-bound} and
  \ref{lem:bs-rod-degree-upper-bound}, the only rods that can appear
  in a local model are standard rods. For example, the simplest case
  of a right-ward rod of length $1$ must have degree $\ge -1$ while
  its exposed part must have degree $\le -2$. Therefore it must be
  exactly of degree $-1$, i.e. a standard rod.

  By the discussion of section~\ref{sec:bs-boxes-rods-assembly}, these
  standard rods must be generated by boxes $\cube$ which are
  $z$-torsion-free, which therefore eventually generate $L_a$. Since
  $\cF$ is an $\cO_Y$-module, once a standard rod appears, all rods
  which are stacked on top of it in the $z$ direction must have
  non-decreasing length. Finally, note that a single box cannot
  generate both leftward and rightward rods; such a rod would violate
  Lemma~\ref{lem:bs-rod-degree-lower-bound} and is not in $\cat{T}$.
  Hence if a leftward and a rightward standard rod start at the same
  box, that box has multiplicity $2$. Taking all these constraints
  into account, $\cF$ must be as described. The restriction on
  $\level(\cF)$ arises because there must be an inclusion $\cO_C
  \hookrightarrow \cF$, where here $C = \{p_a\} \times \bC_z$.
\end{proof}

\subsubsection{}
\label{sec:bs-local-model-filtration}

The description in Proposition~\ref{prop:bs-local-model} of the local
model is not specific to the $\square_0 \coloneqq x_a^0 y_a^0$ column
in $L_a$. In general, a local model for $\square = x_a^i y_a^j \in
\lambda_a$ is given by multiplying a local model for $\square_0$ by
$\square$.

\begin{proposition} \label{prop:bs-local-model-filtration}
  Let $[\cO_Y \to \cF]$ be a $\sT$-fixed $\pi$-stable pair with legs
  $\vec\lambda$. Then there is a (not necessarily unique) filtration
  of length $n = |\vec\lambda|$
  \[ \emptyset = \cF_n \subset \cF_{n-1} \subset \cdots \subset \cF_0 = \cF \]
  whose associated graded pieces are local models for each $\square
  \in \lambda_a$ (for $0 \le a < m$).
\end{proposition}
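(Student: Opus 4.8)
The plan is to build the filtration by repeatedly peeling off a single local model, one box $\square \in \lambda_a$ at a time, using the combinatorial structure established in Proposition~\ref{prop:bs-local-model} and the module-theoretic description of Lemma~\ref{lem:bs-boxes-module-structure}. First I would stratify the legs: fix a total order on the $n = |\vec\lambda|$ boxes appearing across all $\lambda_a$, refining the partial order by $z$-degree (equivalently, ordering boxes so that the ``outermost'' or highest-$z$ contributions are peeled first). The idea is that, for a $\sT$-fixed $\pi$-stable pair, the sheaf $\cF$ decomposes along the $z$-torsion-free directions into columns indexed by the boxes $\square \in \lambda_a$, and each such column — together with the rods it emits — is precisely a local model in the sense of Proposition~\ref{prop:bs-local-model}. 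The filtration quotients $\cF_{k}/\cF_{k+1}$ should then realize these individual local models one at a time.

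The key steps, in order, are as follows. First I would use Lemma~\ref{lem:bs-boxes-module-structure} to write $\cF^{\rb}_a$ as a $\sT$-invariant submodule of the localization $M_a = (L_a)_z$ with $z^n L_a \subset \cF^{\rb}_a$ for $n \gg 0$, so that the $z$-torsion-free part of $\cF$ is organized by the columns of $\lambda_a$; each column corresponds to a box $\square = x_a^i y_a^j \in \lambda_a$. Second, I would identify, for the top box $\square$ in the chosen order, the subsheaf $\cG \subset \cF$ generated by that column and the standard rods it emits. By the remark following Proposition~\ref{prop:bs-local-model}, multiplying a local model for $\square_0$ by the monomial $x_a^i y_a^j$ produces a local model for $\square$; I would check that $\cG$ is exactly such a local model. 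Third, I would verify that the quotient $\cF/\cG$ is again a $\sT$-fixed $\pi$-stable pair (now with one fewer box in its legs), so that the construction can be iterated. This requires checking that $\cF/\cG$ still lies in $\cat{T}^\perp$ and that its cokernel against $\cO_Y$ stays in $\cat{T}$ — i.e. that quotienting by a local model does not destroy purity or introduce a forbidden rod. Iterating $n$ times yields the desired filtration $\emptyset = \cF_n \subset \cdots \subset \cF_0 = \cF$ with local-model graded pieces.

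The main obstacle I expect is the interaction between rods of different columns when a single box carries \emph{multiplicity} $2$, i.e. when it generates both a leftward and a rightward standard rod (as in the $\level$ discussion in Proposition~\ref{prop:bs-local-model}). In that situation the weight space $[z^k]\cF(U_a) = \bC v_L \oplus \bC v_R$ is genuinely two-dimensional, and peeling off one local model must cleanly separate the $v_L$ and $v_R$ contributions without breaking the module structure $\begin{psmallmatrix} 1 & 1 \end{psmallmatrix}$ at the level $\ell$. The danger, illustrated by Example~\ref{ex:bs-exposed-rod-multiplicity}, is that a careless choice of subsheaf creates an exposed rod of degree $-1$, violating Lemma~\ref{lem:bs-rod-degree-upper-bound} and landing outside $\cat{T}^\perp$. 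I would handle this by ordering the peeling so that, at each multiplicity-$2$ box, the rod being removed is the one generated by a specified basis vector compatible with the module map, and by invoking Lemmas~\ref{lem:bs-rod-degree-lower-bound} and \ref{lem:bs-rod-degree-upper-bound} to confirm that both $\cG$ and $\cF/\cG$ have only admissible (standard) rods. The non-uniqueness asserted in the statement is exactly the freedom in these basis choices, so I would not attempt to pin down a canonical filtration — only to exhibit one.
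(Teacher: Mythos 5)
Your overall strategy---peel off one local model at a time as a subsheaf $\cG \subset \cF$, verify that the quotient $\cF/\cG$ is again a $\sT$-fixed $\pi$-stable pair with one fewer column, and induct with Proposition~\ref{prop:bs-local-model} as the base case---is the same as the paper's. But your construction of $\cG$ has a genuine gap: you take ``the subsheaf generated by that column and the standard rods it emits,'' and this fails to be a local model in precisely the interesting cases, namely when a rod emitted by the chosen column is glued at its far end to another column. In Example~\ref{ex:bs-nontrivial-fixed-loci} at the point $[1:1]$, where $\cF = z^{-1}\cO_C$ with an internal leg, the subsheaf generated by the left column contains the full degree-$0$ rod $\cO_E$, and the far endpoint of that rod generates the neighboring column under multiplication by $z$; so your $\cG$ swallows the other leg and equals all of $\cF$. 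The paper's definition is subtly but crucially different: $\cG$ is the \emph{largest local model at $\square$ that embeds as a subsheaf of} $\cF$. Because a standard rod has degree $-1$ at its far end, its sections vanish there and it stays a subsheaf even when the ambient rod of $\cF$ continues into another column---this is what makes the peeling well defined, and it is not recoverable from ``the subsheaf generated by'' the column.

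The second gap is that the verification that $\cF' = \cF/\cG$ remains $\pi$-stable---the actual content of the paper's proof---is only named in your proposal, not performed. The paper argues: $\cQ' \in \cat{T}$ is automatic since standard rods satisfy Lemma~\ref{lem:bs-rod-degree-lower-bound}; and if $\cF' \notin \cat{T}^\perp$, one takes $\cT \in \cat{T}$ with $\Hom(\cT, \cF') \neq 0$ and splits into $\dim\supp\cT = 0$ (leftover boxes extending a removed standard rod) and $\dim\supp\cT = 1$ (a newly exposed rod whose preimage in $\cF$ was not exposed, which can happen only through boxes of multiplicity $> 1$ as in Example~\ref{ex:bs-exposed-rod-multiplicity}); in both cases one exhibits an exposed rod in $\cF$ itself violating Lemma~\ref{lem:bs-rod-degree-upper-bound}, contradicting $\cF \in \cat{T}^\perp$. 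Note the logical shape: the \emph{maximality} of $\cG$ is what lets any failure of the quotient be pushed back onto $\cF$, where stability of $\cF$ gives the contradiction. Your instinct that multiplicity-$2$ boxes and Example~\ref{ex:bs-exposed-rod-multiplicity} are the danger point is correct, but your proposed remedy---choosing the peeling order and basis vectors so that the removal is ``clean''---shifts the burden to proving that such a good choice exists, which your outline never does; the paper's maximal-choice-plus-contradiction argument is what replaces that existence claim.
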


\begin{proof}
  Pick the smallest $a$ such that $\lambda_a$ is non-empty. Pick
  $\square \in \lambda_a$ not supporting any other squares in
  $\lambda_a$. Let $\cG$ be the {\it largest} local model at $\square$
  which is still a subsheaf of $\cF$. By largest, we mean with
  standard leftward and rightward rods of maximal possible length
  generated at every box $z^k \cdot \square$. The claim is that the
  resulting quotient $\cF' \coloneqq \cF/\cG$ is still $\pi$-stable.
  Then $\cF'$ contains strictly fewer columns in its infinite legs,
  and induction with Proposition~\ref{prop:bs-local-model} as the base
  case finishes the proof.

  Let $0 \to \cO_{C'} \to \cF' \to \cQ' \to 0$ be the resulting pair.
  There are two ways $\cF'$ can fail to be $\pi$-stable: $\cQ' \notin
  \cat{T}$ or $\cF' \notin \cat{T}^\perp$. Clearly $\cQ' \in \cat{T}$
  iff $\cQ \in \cat{T}$, since all standard rods satisfy
  Lemma~\ref{lem:bs-rod-degree-lower-bound}. So suppose $\cF' \notin
  \cat{T}^\perp$. Then $\Hom(\cT, \cF') \neq 0$ for some $\cT \in
  \cat{T}$. If $\dim \supp \cT = 0$, then the image of $\cT$ in $\cF'$
  must consist of boxes extending some standard rod that was removed
  as part of $\cG$. Then there is an exposed rod in $\cF$ violating
  Lemma~\ref{lem:bs-rod-degree-upper-bound}, contradicting $\cF \in
  \cat{T}^\perp$.

  The only remaining possibility is that $\dim \supp \cT = 1$. Namely,
  removing $\cG$ created an exposed rod $\cR'$ in $\cF'$ whose
  pre-image in $\cF$ is {\it not} an exposed rod. Let $\cube$ denote
  the box in $\cR'$ lying in the column corresponding to $\square$. By
  Lemma~\ref{lem:bs-rod-degree-upper-bound}, making $\cube$ into a
  $z$-torsion-free box turns $\cR'$ into a standard rod $\cR$ in
  $\cF$, which should have been removed as part of $\cG$. The only way
  for it to still be part of $\cF'$ is if it overlaps with a standard
  rod already in $\cG$, in the form of boxes with multiplicity in
  $\cR$. As in Example~\ref{ex:bs-exposed-rod-multiplicity}, these
  multiplicity $> 1$ boxes therefore contain an exposed rod violating
  Lemma~\ref{lem:bs-rod-degree-upper-bound}, again contradicting $\cF
  \in \cat{T}^{\perp}$.
\end{proof}

\subsubsection{}

The converse to Proposition~\ref{prop:bs-local-model-filtration} is
also true, and important for us when we want to build $\pi$-stable
pairs out of specific local models.

\begin{proposition} \label{prop:bs-local-model-converse}
  Suppose $\cF \in \cat{Coh}(Y)$ admits an inclusion $\cO_C
  \hookrightarrow \cF$ where $C$ has external legs $\vec\lambda$, and
  that it also admits a filtration
  \[ \emptyset = \cF_n \subset \cF_{n-1} \subset \cdots \subset \cF_0 = \cF \]
  of length $n = |\vec\lambda|$ whose associated graded pieces are
  local models for each $\square \in \lambda_a$ (for $0 \le a < m$).
  Then the induced
  \[ s\colon \cO_Y \twoheadrightarrow \cO_C \hookrightarrow \cF \]
  is a $\pi$-stable pair.
\end{proposition}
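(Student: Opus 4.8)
The plan is to verify directly the two defining conditions of a $\pi$-stable pair: that $\cF \in \cat{F} = \cat{T}^\perp$ and that $\coker(s) \in \cat{T}$. Since $s$ is the composite $\cO_Y \twoheadrightarrow \cO_C \hookrightarrow \cF$, its image is $\cO_C$ and its cokernel is $\cQ \coloneqq \cF/\cO_C$, so these are the only two memberships to check. The single tool driving the argument is that $(\cat{T}, \cat{F})$ is a torsion pair: $\cat{T}$ is closed under quotients and extensions, and $\cat{F} = \cat{T}^\perp$ is closed under subobjects and extensions. The strategy is to deduce each membership for $\cF$ (resp.\ $\cQ$) from the corresponding property of the local-model graded pieces $G_i \coloneqq \cF_i/\cF_{i+1}$ of the given filtration, letting closure under extensions do the globalization.

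First I would record the relevant properties of a single local model $G$. Each $G$ is pure of dimension $1$: its boxes lie either in an infinite leg or in standard rods, all of which are supported on $E$, so $G$ carries no $0$-dimensional torsion. The rods of $G$ are standard, so by Lemma~\ref{lem:bs-rod-degree-lower-bound} each lies in $\cat{T}$, and the exposed rods occurring inside $G$ satisfy the degree bound of Lemma~\ref{lem:bs-rod-degree-upper-bound} — precisely because the local-model multiplicity rule (the $\begin{psmallmatrix} 1 & 1 \end{psmallmatrix}$ condition) forbids the degree $-1$ exposed rod of Example~\ref{ex:bs-exposed-rod-multiplicity}. By the discussion following Lemma~\ref{lem:bs-rod-degree-upper-bound}, a pure one-dimensional sheaf satisfying this bound lies in $\cat{T}^\perp$; hence $G \in \cat{F}$. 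Finally, $G$ contains its leg column as a subsheaf lying inside $\cO_C$, with quotient supported on $E$ and filtered by its standard rods, hence lying in $\cat{T}$.

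With these in hand the assembly is short. For $\cF \in \cat{F}$: the given filtration has graded pieces $G_i \in \cat{F}$, so $\cF \in \cat{F}$ by closure under extensions. For $\cQ \in \cat{T}$: the filtration $\cF_\bullet$ induces the filtration $\cQ_i \coloneqq (\cF_i + \cO_C)/\cO_C$ of $\cQ$, and the modular law identifies $\cQ_i/\cQ_{i+1}$ with the quotient of $G_i$ by its part lying in $\cO_C$. Since that part is exactly the leg column of $G_i$ — the legs of the various $G_i$ reassemble $\cO_C$, while their rods reassemble $\cQ$ — each graded piece $\cQ_i/\cQ_{i+1}$ is the standard-rod part of $G_i$, which lies in $\cat{T}$. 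Closure of $\cat{T}$ under extensions then gives $\cQ \in \cat{T}$, and $s$ is therefore $\pi$-stable.

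The point needing the most care is the claim that each local model lies in $\cat{F}$, since this is where the combinatorics enters: one must check that stacking standard rods under the multiplicity rules never manufactures an exposed rod violating Lemma~\ref{lem:bs-rod-degree-upper-bound}. I expect this to be the main obstacle, though it is essentially the content already extracted in the analysis behind Proposition~\ref{prop:bs-local-model}. A subsidiary technical point is the identification of $\cF_i \cap \cO_C$ with the leg column of $G_i$. The virtue of routing everything through torsion-pair closure is that, once the graded pieces are controlled, no bookkeeping of how rods from different local models overlap is needed: any overlap that would create a destabilizing exposed rod is automatically excluded by the hypothesis that the graded pieces are genuine local models, consistently with the failure of the configuration in Example~\ref{ex:bs-exposed-rod-multiplicity} to admit such a filtration.
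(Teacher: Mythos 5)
Your global strategy --- verify the two memberships $\cF \in \cat{T}^\perp$ and $\cQ = \cF/\cO_C \in \cat{T}$ by reducing to the graded pieces of filtrations and invoking closure of the torsion pair under extensions --- is in spirit the same as the paper's proof, which is a one-sentence appeal to the fact that local models were engineered to satisfy the constraints of Lemma~\ref{lem:bs-rod-degree-lower-bound} and Lemma~\ref{lem:bs-rod-degree-upper-bound}. Your first half ($\cF \in \cat{T}^\perp$ from purity, the exposed-rod bound, and closure of $\cat{T}^\perp$ under extensions) is fine at the paper's level of rigor.

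The second half, however, contains a genuine error at its key step. You identify the part of $G_i$ lying in $\cO_C$ with \emph{the full leg column} of $G_i$, so that the graded pieces of $\cQ$ become the rods of $G_i$ stripped of their generating boxes. But a standard rod minus its generator is exactly a \emph{maximal exposed rod}: its degrees are $(-1,0,\ldots,0,-1)$ (or $(-2)$ in length one), and by Lemma~\ref{lem:bs-rod-degree-lower-bound} such sheaves have $H^1 \neq 0$, i.e.\ they are precisely \emph{not} in $\cat{T}$ --- indeed, that they must not lie in $\cat{T}$ is the entire content of Lemma~\ref{lem:bs-rod-degree-upper-bound}. So the graded pieces as you identify them fail to be in $\cat{T}$, and the extension argument collapses. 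What is actually true is that $\cO_C$, being the structure sheaf of a curve with external legs $\vec\lambda$, occupies only the non-negative $z$-levels of the leg columns (together with any internal legs of $C$), whereas the columns of the local models extend below level $0$. Given that the inclusion $\cO_C \hookrightarrow \cF$ exists, every rod either is generated at a negative level, or retains a generator in $\cQ$ because its generating box has multiplicity $2$ (only one of the two vectors is absorbed into $\cO_C$), or is swallowed by an internal leg of $C$. Hence the graded pieces of $\cQ$ are iterated extensions of skyscraper boxes and standard rods \emph{retaining their generators}, with degrees $(0,\ldots,0,-1)$, and these are in $\cat{T}$ by Lemma~\ref{lem:bs-rod-degree-lower-bound}. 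Note that this is exactly where the hypothesis that $\cO_C \hookrightarrow \cF$ exists must do real work: your write-up never uses it in the $\cat{T}$ half, yet it is what rules out the configurations (e.g.\ a multiplicity-one box at level $\ge 0$ generating a rod) whose quotient would contain an exposed rod and fail to lie in $\cat{T}$.
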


\begin{proof}
  We engineered the notion of a local model to satisfy the constraints
  of Lemma~\ref{lem:bs-rod-degree-lower-bound} and
  Lemma~\ref{lem:bs-rod-degree-upper-bound} for $\cQ$ and $\cF$ to be
  in $\cat{T}$ and $\cat{T}^\perp$ respectively.
\end{proof}

\subsection{T-fixed loci}
\label{sec:bs-fixed-loci-structure}

\subsubsection{}

In general, the $\sT$-fixed loci in $\PiPairs$ can have arbitrarily
large dimension. Since the virtual tangent space \eqref{eq:bs-Tvir}
for $\PiPairs$ depends only on the box configuration, and there is no
ambiguity in forming a local model from a given box configuration,
these fixed loci must arise from the ambiguity in reconstructing the
$\cO_Y$-module $\cF$ from the local models. More precisely, note that
for fixed $a$, the local models for each $\square \in \lambda_a$ form
a $\cO_Y$-module $\cF_a$ uniquely (cf. the proof of
Lemma~\ref{lem:bs-quasimap-module-condition}), essentially because the
$\cO_Y$-module maps between boxes in different local models are
induced from those of the leg $L_a$. By
Proposition~\ref{prop:bs-local-model-filtration}, the ambiguity in
reconstructing $\cF$ is therefore solely in how to attach different
$\cF_a$ to each other. In other words, given fixed $\cF_a$ for $0 \le
a < m$, the space of all possible $\cF$ is controlled by the groups
$\Ext^1(\cF_a, \cF_b)$ for $0 \le a, b < m$.

\subsubsection{}

In particular, all non-trivial extensions between $\cF_a$ and $\cF_b$
with $a < b$ arise when there are two boxes $\cube \in \cF_a$ and
$\cube \in \cF_b$ with the same $\sT$-weight, and either $\cube \in
\cF_a$ generates a standard rightward rod or $\cube \in \cF_b$
generates a standard leftward rod. The simplest example is as follows.

\begin{example} \label{ex:bs-nontrivial-fixed-loci}
  Consider the vertex for $(\square, \square) \in \cA_1^{\sT}$. One
  $\sT$-fixed component for this vertex has the following box
  configuration.
  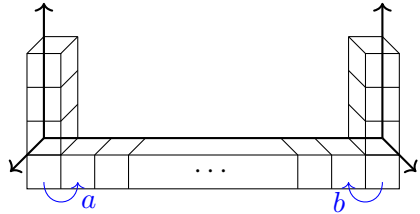
\begin{figure}[h]
    \centering
    \begin{tikzpicture}[scale=0.45]
      \draw[thick,->] (0,0) -- (0,4);
      \draw[thick,->] (10,0) -- (10,4);
      \draw[thick] (0,0) -- (10,0);
      \draw[thick,->] (0,0) -- (-1,-1);
      \draw[thick,->] (10,0) -- (11,-1);

      \draw (-0.5,-0.5) rectangle (0.5,2.5);
      \draw (9.5,-0.5) rectangle (10.5,2.5);
      \draw (-0.5,-1.5) rectangle (10.5,-0.5);
      \draw (1,0) -- (1,3);
      \draw (9,0) -- (9,3);
      \foreach \x/\y in {1/0, 2/0, 3/0, 1/1, 1/2, 1/3, 0/3} {
        \draw (\x,\y) -- (\x-0.5,\y-0.5);
      };
      \foreach \x/\y in {7/0, 8/0, 9/0, 9/1, 9/2, 9/3, 10/3} {
        \draw (\x,\y) -- (\x+0.5,\y-0.5);
      };
      \foreach \x in {0.5,1.5,2.5,7.5,8.5,9.5} {
        \draw (\x,-0.5) -- (\x,-1.5);
      };
      \foreach \x/\y in {0.5/0.5, 0.5/1.5, 1/3, 10.5/0.5, 10.5/1.5, 10/3} {
        \draw (\x,\y) -- (\x-1,\y);
      };
      \node at (5,-1) {$\cdots$};
      \draw[blue,->] (0,-1.3) to[looseness=2,in=-90,out=-90] node[label=right:$a$]{} (1,-1.3);
      \draw[blue,->] (10,-1.3) to[looseness=2,in=-90,out=-90] node[label=left:$b$]{} (9,-1.3);
    \end{tikzpicture}
    \caption{Box configuration for a positive-dimensional fixed locus}
  \end{figure}
  Here the $a$ and $b$ arrows represent $\cO_Y$-module maps. Note that
  every point $[a : b] \in \bP^1$ yields a distinct $\cO_Y$-module
  $\cF$. The points $0 \in \bP^1$ and $\infty \in \bP^1$ correspond to
  the direct sum of two local models: one with a single box and
  another with a standard leftward/rightward rod. The point $[1 : 1]$
  corresponds to $z^{-1} \cO_C$, where $C$ is the CM curve with the
  two specified external legs and one internal leg.
\end{example}

\subsubsection{}

The existence of positive-dimensional fixed loci should not be
surprising, since already in PT theory fixed loci can be arbitrary
products of $\bP^1$'s \cite[Theorem 1]{Pandharipande2009a}. Note
however that it is heavily dependent on the underlying threefold
geometry being of the form $\tot(\cO_{\bP^1} \oplus \cO_{\bP^1}(d))$,
i.e. having a trivial $z$-direction. When this is not the case, such
as for the conifold $\tot(\cO(-1) \oplus \cO(-1))$, the fixed locus
should consist of only isolated fixed points.

That the fixed loci are always proper will follow from the properness
of the corresponding quasimap fixed loci and the isomorphism of fixed
loci constructed in the BS/quasimaps correspondence. It seems difficult to
prove properness directly in $\PiPairs$.

\section{Two Hilbert schemes}

\subsection{As Nakajima quiver varieties}
\label{sec:two-hilbert-schemes}

\subsubsection{}

Let $Q$ be a quiver with vertex set $I$. The associated Nakajima
quiver variety $\cM_{\vec\theta}$ depends on a GIT stability parameter
${\vec\theta} \in \bR^I$. We briefly review the construction.

\begin{definition}[{\cite[Section 2]{Nakajima1994}}, {\cite[Section 5]{Ginzburg2012}}]
  For two $I$-graded vector spaces $V = \bigoplus_{i \in I} V_i$ and
  $W = \bigoplus_{i \in I} W_i$, let
  \begin{equation} \label{eq:quiver-rep-vector-spaces}
    \cat{Rep}_Q^{\text{framed}} \coloneqq \bigoplus_{\text{edge } i \to j} \Hom(V_i, V_j) \oplus \bigoplus_{i \in I} \Hom(W_i, V_i)
  \end{equation}
  be the space of {\it framed} representations of the quiver $Q$,
  where $W$ is the framing. Then $T^*\cat{Rep}_Q^{\text{framed}}$ has
  a standard symplectic form, and the standard action by $G_V
  \coloneqq \prod_{i \in I} \GL(V_i)$ is Hamiltonian with moment map
  which we denote $\mu$. The GIT quotient
  \[ \cM_{\vec\theta} \coloneqq \mu^{-1}(0) \sslash_{\vec\theta} G_V \]
  is called the {\it algebraic} symplectic reduction. The resulting
  Nakajima quiver variety has components $\cM_{\vec\theta}(\vec v,
  \vec w)$ indexed by the dimension vectors $\vec v$ and $\vec w$ of
  $V$ and $W$.
\end{definition}

When ${\vec\theta}$ is generic, $\cM_{\vec\theta}$ is smooth and is a
fine moduli space of quiver representations. It therefore has a
universal family
\[ \rV = \bigoplus_{i \in I} \rV_i. \]
(The same goes for $\rW = \bigoplus_{i \in I} \rW_i$, which is a
trivial bundle and less interesting.)

\subsubsection{}
\label{sec:quiver-variety-stability-conditions}

By the general theory of GIT quotients, $\cM_{\vec\theta}(\vec v, \vec w)$
is actually independent of ${\vec\theta}$ within chambers in $\bR^I$ cut out
by a certain hyperplane arrangement. For Nakajima quiver varieties,
these chambers are exactly the Weyl chambers of the (generalized)
Kac--Moody algebra associated to $Q$. 

The varieties $\cM_{\vec\theta}$ and $\cM_{{\vec\theta}'}$ for ${\vec\theta}$ and
${\vec\theta}'$ in different stability chambers are related as follows. Let
$(B, i) \in \cat{Rep}_Q^{\text{framed}}$ and $(\bar B, j)$ be in the
cotangent fiber. Points in $\cM_{\vec\theta}$ are represented by tuples $(B,
\bar B, i, j)$ satisfying the moment map equation $\mu = 0$ and the
stability condition imposed by ${\vec\theta}$. There is an $S^1$-action
given by
\begin{equation} \label{eq:S1-hyperkahler-rotation}
  t \cdot (B, \bar B, i, j) \coloneqq (t B, t^{-1} \bar B, i, j)
\end{equation}
which respects the equation $\mu = 0$, but may change stability
chambers.

\begin{theorem}[{\cite[4.1.3]{Nagao2009}}] \label{thm:hyperkahler-rotation-diffeomorphism}
  For generic ${\vec\theta}, {\vec\theta}'$, the varieties $\cM_{\vec\theta}$ and
  $\cM_{{\vec\theta}'}$ are $S^1$-equivariantly diffeomorphic.
\end{theorem}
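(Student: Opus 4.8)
The plan is to realize both $\cM_{\vec\theta}$ and $\cM_{{\vec\theta}'}$ as hyperk\"ahler quotients of the flat quaternionic vector space $\mathbb{M} \coloneqq T^*\cat{Rep}_Q^{\text{framed}}$ by the maximal compact subgroup $K \coloneqq \prod_{i \in I} U(V_i) \subset G_V$, and then to deform one into the other through a family of such quotients. Writing $\vec\mu = (\mu_{\bR}, \mu_{\bC})$ for the hyperk\"ahler moment map of $K$, where $\mu_{\bC} = \mu$ is the complex moment map and $\mu_{\bR}$ the K\"ahler one, the Kempf--Ness theorem identifies the GIT quotient with the real quotient
\[ \cM_{\vec\theta} \cong \left(\mu_{\bR}^{-1}(\vec\theta) \cap \mu_{\bC}^{-1}(0)\right)/K, \]
so that ${\vec\theta}$ is exactly the level of $\mu_{\bR}$ while the complex level is $0$. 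More generally, for a central value $\vec\zeta = (\zeta_{\bR}, \zeta_{\bC}) \in \fz \otimes \bR^3$, where $\fz \cong \bR^I$ is the center of $\Lie K$, set $X_{\vec\zeta} \coloneqq \vec\mu^{-1}(\vec\zeta)/K$, so that $\cM_{\vec\theta} = X_{({\vec\theta},0)}$. A direct check shows that, for $|t| = 1$, the action \eqref{eq:S1-hyperkahler-rotation} is a unitary, holomorphic linear automorphism of $\mathbb{M}$ commuting with $K$ and fixing both $\mu_{\bR}$ and $\mu_{\bC}$ pointwise; hence it preserves each level set $\vec\mu^{-1}(\vec\zeta)$ and acts \emph{fiberwise} on the family of the $X_{\vec\zeta}$.

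First I would pin down the walls in the parameter space $P \coloneqq \fz \otimes \bR^3$. A standard argument shows that $K$ acts freely on $\vec\mu^{-1}(\vec\zeta)$, equivalently that $X_{\vec\zeta}$ is smooth of the expected dimension, precisely when $\vec\zeta(\alpha) \neq 0 \in \bR^3$ for every root $\alpha$, i.e. every dimension vector $0 < \alpha \le \vec v$ supporting a subrepresentation. Each such condition is the vanishing of a vector in $\bR^3$, so the wall $W_\alpha \coloneqq \{\vec\zeta : \vec\zeta(\alpha) = 0\}$ has real codimension $3$. Since ${\vec\theta}$ and ${\vec\theta}'$ are generic we have ${\vec\theta}(\alpha) \neq 0$ for all $\alpha$, so both $({\vec\theta},0)$ and $({\vec\theta}',0)$ lie in the regular locus $P^\circ \coloneqq P \setminus \bigcup_\alpha W_\alpha$; and because the $W_\alpha$ have codimension $3$, the complement $P^\circ$ is connected. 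I then fix a path $\gamma$ in $P^\circ$ from $({\vec\theta},0)$ to $({\vec\theta}',0)$. Note that this path must leave the slice $\{\zeta_{\bC} = 0\}$: within that slice the walls cut out the honest Weyl chambers of the Kac--Moody root system, which is exactly why the straight segment between two different chambers fails, and why the codimension-$3$ phenomenon is essential.

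Over $P^\circ$ the quotients assemble into a smooth family $\mathcal{X} \to P^\circ$ with fiber $X_{\vec\zeta}$, and I would produce the diffeomorphism by parallel transport along $\gamma$. The one genuine subtlety, which I expect to be the main obstacle, is that the fibers are \emph{non-compact}, so Ehresmann's theorem does not apply verbatim: I would instead equip $\mathbb{M}$ with a complete $K$-invariant hyperk\"ahler metric and argue that the resulting horizontal lift of $\dot\gamma$ is a \emph{complete} vector field on $\mathcal{X}$, so that its time-one flow is a well-defined diffeomorphism $\cM_{\vec\theta} \cong \cM_{{\vec\theta}'}$. This completeness is precisely the point where one must invoke the properness properties of the moment map of a quiver variety (following Nakajima), rather than any compactness. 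Finally, for $S^1$-equivariance it suffices to average the chosen Ehresmann connection over the compact group $S^1$, so that the horizontal distribution commutes with the fiberwise $S^1$-action on $\mathcal{X}$ established above; the transport then intertwines the two fiberwise $S^1$-actions, giving the desired $S^1$-equivariant diffeomorphism.
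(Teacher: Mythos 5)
The paper itself offers no proof of this statement: it is quoted directly from Nagao \cite[4.1.3]{Nagao2009}, so the only fair comparison is with the argument in that reference. Your strategy is precisely that argument: realize $\cM_{\vec\theta}$ as a hyperk\"ahler quotient, observe that the full parameter space is $\bR^3 \otimes \bR^I$ with non-free loci along codimension-$3$ walls indexed by the positive roots $\alpha \le \vec v$ of the associated Kac--Moody algebra, conclude the regular locus is connected, and transport along a path (necessarily leaving the slice $\zeta_{\bC}=0$, as you correctly emphasize), with $S^1$-equivariance coming from the fact that the action \eqref{eq:S1-hyperkahler-rotation} with $|t|=1$ preserves the flat metric and both $\mu_{\bR}$ and $\mu_{\bC}$, hence descends to every quotient in the family. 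Your verification of these invariance properties is correct, and averaging the Ehresmann connection over the compact $S^1$ is a legitimate way to obtain equivariance of the transport.

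The one genuine gap is the step you yourself flag: local triviality of the family with non-compact fibers. Your proposed fix --- ``invoke the properness properties of the moment map of a quiver variety'' --- does not work as stated, because the hyperk\"ahler moment map on $T^*\cat{Rep}_Q^{\text{framed}}$ is a quadratic map with non-compact fibers and is \emph{not} proper; the properness statements available for quiver varieties concern other maps (e.g.\ the projective morphism $\cM_{\vec\theta} \to \cM_0$ to the affine quotient), and these do not directly bound horizontal lifts. What is actually needed is an a priori estimate showing the horizontal lift of $\dot\gamma$ grows at most linearly in the distance to a basepoint (so its flow cannot escape to infinity in finite time), or a fibration theorem of Hermann type after arranging a suitable submersion metric; in the quiver setting such control can be extracted from the degree-$2$ homogeneity of $\mu$ under the dilation $(B, \bar B, i, j) \mapsto (sB, s\bar B, si, sj)$, which makes the family conical near infinity. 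This is exactly the content of the wall-crossing diffeomorphism statements in Kronheimer's and Nakajima's work on hyperk\"ahler quotients, on which Nagao's proof rests, so the honest options are to cite that result or to supply the estimate; as written, your sketch asserts rather than proves the key completeness. A minor further imprecision: the walls should be indexed by positive roots of the Kac--Moody algebra (Nakajima's genericity condition), not by arbitrary dimension vectors of subrepresentations, though only the sufficiency of genericity for freeness is needed for your argument.
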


\begin{remark}
  For algebraic symplectic reductions, it is straightforward to verify
  that both ${\vec\theta}$ and $-{\vec\theta}$ yield isomorphic spaces. Due to
  this, our stability chambers/criteria may differ from convention by
  a sign.
\end{remark}

\subsubsection{}

The quiver $Q$ we are interested in arises from the McKay
correspondence, which associates to every surface singularity of type
ADE an affine Dynkin diagram of the corresponding type.

\begin{definition}
  If $\Gamma \subset \SL(2, \bC)$ defines the singularity, let $R_0 =
  \bC, R_1, \ldots, R_{m-1}$ denote the irreducible representations of
  $\Gamma$, and $\bC^2$ be the natural representation of $\Gamma$.
  Decompose
  \[ R_i \otimes \bC^2 \cong \bigoplus R_j^{\oplus a_{ij}}. \]
  Then the matrix $A \coloneqq (a_{ij})$ is the adjacency matrix for
  the Dynkin diagram.
\end{definition}

For $\cA_{m-1}$, the resulting quiver is the affine (or cyclic) type A
quiver in Figure~\ref{fig:cyclic-type-A-quiver}. After framing and
doubling, we view $T^*\cat{Rep}_Q^{\text{framed}}$ as the moduli of
representations of the quiver shown in
Figure~\ref{fig:cyclic-type-A-quiver-doubled-framed}. The dimension
vectors
\[ \vec v^0 \coloneqq (\dim R_0, \dim R_1, \ldots, \dim R_{m-1}), \quad \vec w^0 \coloneqq (1, 0, 0, \ldots, 0) \]
will be especially relevant. For $\cA_{m-1}$, clearly $\dim R_i = 1$
for all $i$. In Lie-theoretic language, $\vec v^0$ is the affine root
of the associated affine Lie algebra.

\begin{figure}[h]
  \centering
  \begin{subfigure}[b]{0.45\textwidth}
    \centering
    \begin{tikzpicture}[decoration={markings,mark=at position 0.5 with {\arrow{>}}}]
      \foreach \i in {0,1,...,5} {
        \node[vertex] (v\i) at (60*\i:1) {};
      }
      \foreach \i in {0,1,...,5} {
        \pgfmathtruncatemacro{\ii}{int(mod(\i+1,6))}
        \draw[postaction={decorate}] (v\i) to[bend right] (v\ii);
      }
    \end{tikzpicture}
    \caption{Affine type A quiver}
    \label{fig:cyclic-type-A-quiver}
  \end{subfigure}%
  ~
  \centering
  \begin{subfigure}[b]{0.45\textwidth}
    \centering
    \begin{tikzpicture}[decoration={markings,mark=at position 0.5 with {\arrow{>}}}]
      \node[vertex,shape=rectangle,minimum size=6pt] (w) at (2,0) {};
      \foreach \i in {0,1,...,5} {
        \node[vertex] (v\i) at (60*\i:1) {};
      }
      \draw[postaction={decorate}] (w) to[bend right] (v0);
      \draw[postaction={decorate}] (v0) to[bend right] (w);
      \foreach \i in {0,1,...,5} {
        \pgfmathtruncatemacro{\ii}{int(mod(\i+1,6))}
        \draw[postaction={decorate}] (v\i) to[bend right] (v\ii);
        \draw[postaction={decorate}] (v\ii) to[bend right] (v\i);
      }
    \end{tikzpicture}
    \caption{Doubled framed affine type A quiver}
    \label{fig:cyclic-type-A-quiver-doubled-framed}
  \end{subfigure}
  \caption{Quivers associated to the $A_5$ surface singularity}
\end{figure}
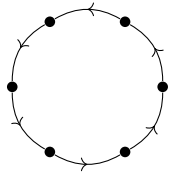
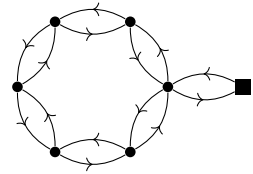

\subsubsection{}

For affine ADE quivers, there are two stability chambers we are
interested in. The first is
\begin{equation} \label{eq:orbifold-stability-chamber}
  C_+ \coloneqq \{{\vec\theta} \in \bR^m \; | \; \theta_i > 0 \text{ for all } 0 \le i < m\}.
\end{equation}
The following theorem rephrases a well-known generalization of the
description of $\Hilb(\bC^2)$ as a Nakajima quiver variety.

\begin{theorem}[{\cite[Theorem 4.4]{Nakajima1999}}] \label{thm:Hilb-orbifold-stability-chamber}
  For any integer $n \ge 0$ and any ${\vec\theta} \in C_+$, there is an
  isomorphism
  \[ \cM_{\vec\theta}(n \vec v^0, \vec w^0) \cong \Hilb^n([\bC^2/\Gamma]). \]
\end{theorem}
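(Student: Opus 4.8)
The plan is to realize both sides as GIT quotients of the same space of $\Gamma$-equivariant ADHM data, so that the asserted isomorphism becomes almost tautological once the moment-map equation and the stability condition are matched. By definition, $\Hilb^n([\bC^2/\Gamma])$ is the moduli of $\Gamma$-invariant ideals $I \subset \bC[x,y]$ whose quotient $\bC[x,y]/I$ is isomorphic, as a $\Gamma$-representation, to $n$ copies of the regular representation $\bC[\Gamma] = \bigoplus_i R_i$; since $\dim R_i = 1$ this is the condition $\dim(\bC[x,y]/I)_i = n$ for every $i$, i.e. the underlying length is $nm$ with isotypic dimension vector $n\vec v^0$. Setting $V \coloneqq \bC[x,y]/I$, multiplication by $x$ and by $y$ furnish the ADHM maps, and the framing is the image of $1$.

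First I would set up the linear-algebra dictionary. Decompose $V = \bigoplus_i V_i$ into $\Gamma$-weight spaces. Because $x$ has $\Gamma$-weight $+1$ and $y$ has weight $-1$, multiplication by $x$ gives maps $B_i \colon V_i \to V_{i+1}$ and multiplication by $y$ gives $\bar B_i \colon V_i \to V_{i-1}$; together these are exactly the arrow/co-arrow data of the doubled affine type A quiver of Figure~\ref{fig:cyclic-type-A-quiver-doubled-framed}. The framing $W = W_0 = \bC$ enters via $i \colon W_0 \to V_0$, $1 \mapsto [1]$, with $j \colon V_0 \to W_0$ the only other framing map. Taking $\Gamma$-invariants of the single ADHM relation $[B_1,B_2] + ij = 0$ produces precisely the vertex-by-vertex moment-map equation cutting out $\mu^{-1}(0)$ for this quiver, so the two zero loci coincide.

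Next I would match the stability conditions. For $\vec\theta \in C_+$ (all $\theta_i > 0$), King $\theta$-stability of the framed representation should be equivalent to the cyclicity condition that $V$ admits no proper $B,\bar B$-invariant graded subspace containing $\im(i)$, i.e. that $i(W_0)$ generates $V$ under the quiver maps. This is exactly the condition singling out cyclic $\bC[x,y]$-modules, hence ideals. The technical heart is the standard lemma that $\mu^{-1}(0) \cap \{\theta\text{-stable}\}$ forces $j = 0$: given the moment-map relation and generation by $\im(i)$, one shows the $B,\bar B$-invariant subspace on which $j$ vanishes is all of $V$. Granting $j = 0$, a $\theta$-stable point yields a surjection $\bC[x,y] \twoheadrightarrow V$, $p \mapsto p(B,\bar B)\,i(1)$, whose kernel is a $\Gamma$-equivariant ideal $I$ with $\bC[x,y]/I \cong V$; conversely the construction of the previous paragraph sends such an $I$ back to a $\theta$-stable point. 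I would also record that this description of stability is constant throughout the chamber $C_+$, consistent with the chamber structure recalled in section~\ref{sec:quiver-variety-stability-conditions}.

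Finally I would upgrade this bijection to an isomorphism of schemes. Since $\vec\theta$ is generic, $\cM_{\vec\theta}$ is a \emph{fine} moduli space carrying a universal family, and the universal quotient $\bC[x,y]\otimes\cO \twoheadrightarrow \rV$ built from $(\rV, B, \bar B, i)$ defines a flat family of $\Gamma$-clusters, hence a morphism $\cM_{\vec\theta}(n\vec v^0,\vec w^0) \to \Hilb^n([\bC^2/\Gamma])$; the inverse morphism comes from the tautological ADHM data on the Hilbert scheme. These are mutually inverse, giving the isomorphism. I expect the main obstacle to be the stability step: pinning down that King's GIT stability for the \emph{framed, doubled} quiver with framing only at the vertex $0$ coincides on the nose with cyclic generation for every $\vec\theta \in C_+$, together with the forced vanishing $j = 0$. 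Everything else is bookkeeping of the $\Gamma$-grading.
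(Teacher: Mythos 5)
The paper offers no proof of this statement: it is quoted directly as \cite[Theorem 4.4]{Nakajima1999}, so there is no internal argument to compare against. Your proposal correctly reconstructs the standard argument behind that citation---the $\Gamma$-isotypic decomposition of ADHM data recovering the doubled framed affine type A quiver, the identification of King stability for every $\vec\theta \in C_+$ with cyclic generation by $\im(i)$ together with the forced vanishing $j=0$ on $\mu^{-1}(0)$, and the upgrade from a pointwise bijection to a scheme isomorphism via universal families---and it is correct (including your care in imposing the regular-representation condition $\dim(\bC[x,y]/I)_i = n$, which the dimension vector $n\vec v^0$ encodes), up to the sign-convention ambiguity in labeling stability chambers that the paper itself acknowledges in section~\ref{sec:quiver-variety-stability-conditions}.
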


Here, the Hilbert scheme of $n$ points on the stack $[\bC^2/\Gamma]$
is equivalently the Hilbert scheme of collections of $nm$ points on
$\bC^2$ which are $\Gamma$-fixed. For this reason, it is sometimes
called the $\Gamma$-equivariant Hilbert scheme. In this language, a
$\Gamma$-invariant ideal $I \subset \bC[x,y]$ cutting out the $nm$
points corresponds to the quiver data
\begin{align} \label{eq:orbifold-quiver-data}
  \begin{split}
    V_k &= \Hom_\Gamma(R_k, \bC[x,y]/I) \\
    W_0 &= \bC.
  \end{split}
\end{align}
The quiver maps are given by $i(1) \coloneqq [1] \in \bC[x,y]/I$ and
$j \coloneqq 0$, and the $B, \bar B\colon V_i \to V_j$ are
multiplication by $x$ or $y$, as per the McKay correspondence. For the
affine type A quiver defining $\cA_{m-1}$, all counterclockwise arrows
are multiplication by $x$ and all clockwise arrows are multiplication
by $y$.

\subsubsection{}

The second stability chamber we want is given in terms of the
level-$0$ hyperplane
\[ D_\delta \coloneqq \{{\vec\theta} \in \bR^m \; | \; \vec v^0 \cdot {\vec\theta} = 0\}. \]
On $D_\delta$, there is a chamber structure defined by hyperplanes
$D_\alpha \coloneqq \{\alpha \cdot {\vec\theta} = 0\}$ for finite roots
$\alpha$; in particular,
\[ C \coloneqq \{{\vec\theta} \in D_\delta \; | \; \theta_i > 0 \text{ for all } 1 \le i < m\} \]
is a chamber. Let $C_-(m)$ be the unique chamber in $\bR^m$ lying on
the positive side $\{\vec v^0 \cdot {\vec\theta} > 0\}$ of $D_\delta$ with
$C$ as its face. For example, in type A
\begin{equation} \label{eq:regular-stability-chamber}
  (-m+1+\epsilon, 1, 1, \ldots, 1) \in C_-(m)
\end{equation}
for sufficiently small $\epsilon > 0$.

\begin{theorem}[{\cite[Theorem 4.2]{Nakajima2007}, \cite[Theorem 4.9]{Kuznetsov2007}}] \label{thm:Hilb-regular-stability-chamber}
  For any integer $n \ge 0$ and ${\vec\theta} \in C_-(m)$, there is an
  isomorphism
  \[ \cM_{\vec\theta}(n \vec v^0, \vec w^0) \cong \Hilb^n(S) \]
  where $S \to \bC^2/\Gamma$ is the minimal resolution.
\end{theorem}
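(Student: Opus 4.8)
The statement is \cite{Nakajima2007,Kuznetsov2007}; I outline the strategy I would follow. The plan is to realize the isomorphism directly through the derived McKay correspondence of Bridgeland--King--Reid, which provides an equivalence
\[ \Phi\colon D^b\cat{Coh}(S) \xrightarrow{\ \sim\ } D^b\cat{Coh}_\Gamma(\bC^2), \]
under which $\cO_S$ corresponds to $\cO_{\bC^2}$ with its tautological $\Gamma$-structure, the tautological bundles $\rV_0,\dots,\rV_{m-1}$ correspond to the irreducibles $R_0,\dots,R_{m-1}$, and the skyscraper of a generic point of $S$ corresponds to the structure sheaf of a free $\Gamma$-orbit, whose global sections form the regular representation. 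Taking $\Gamma$-isotypic components identifies the right-hand side with the derived category of modules over the preprojective algebra $\Pi$ of the affine type A quiver. I take as input the $n=1$ case, Kronheimer's realization $S\cong\cM_{\vec\theta}(\vec v^0,\vec w^0)$ together with the bundles $\rV_k$; everything else is built on top of it.

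First I would produce the morphism $\cM_{\vec\theta}(n\vec v^0,\vec w^0)\to\Hilb^n(S)$. A $\theta$-stable point is a tuple $(B,\bar B,i,j)$ with $\mu=0$, equivalently a framed $\Pi$-module $M$ of dimension vector $n\vec v^0$ with framing $\vec w^0$; transporting the associated $\Gamma$-equivariant sheaf through $\Phi^{-1}$ yields an object on $S$. The claim I must verify is that for $\vec\theta\in C_-(m)$ the stability forces $j=0$ and makes the framing vector $i$ cyclic, so that $\Phi^{-1}(M)$ is quasi-isomorphic to an honest ideal sheaf $\cI_Z\subset\cO_S$ of a length-$n$ subscheme $Z$, the framing recording the inclusion $\cI_Z\hookrightarrow\cO_S$. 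Performing this construction over the universal family $\rV$ produces a flat family of ideals and hence the desired morphism.

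For the inverse I would run the same dictionary backwards over the Hilbert scheme. Let $\mathcal{Z}\subset S\times\Hilb^n(S)$ be the universal subscheme; applying $\Phi$ relatively to $\cO_{\mathcal{Z}}$ and setting $V_k\coloneqq\Hom_\Gamma(R_k,\Phi(\cO_{\mathcal{Z}}))$ gives a family of vector spaces of rank $n$ (a generic $Z$ spreads over $n$ free orbits, contributing $R_k$ exactly $n$ times), while multiplication by $x,y$ supplies $B,\bar B$, the image of $1\in\cO_S$ supplies $i$, and $j=0$. Commutativity of $\bC[x,y]$ gives $\mu=0$ for free, and surjectivity of $\cO_S\twoheadrightarrow\cO_Z$ translates into cyclicity of $i$, i.e. $\theta$-stability for $\vec\theta\in C_-(m)$. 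These two constructions are mutually inverse because $\Phi$ is an equivalence. As a consistency check, the Nakajima dimension formula gives $\dim\cM_{\vec\theta}(n\vec v^0,\vec w^0)=2\,n\,(\vec v^0\cdot\vec w^0)=2n$, using that $\vec v^0$ is the null root and hence lies in the radical of the affine Cartan form; this matches $\dim\Hilb^n(S)=2n$. Since both spaces are smooth and the two maps are mutually inverse bijections respecting the universal families, they assemble into an isomorphism of schemes.

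The main obstacle is the stability matching of the previous two paragraphs: one must show that it is precisely the chamber $C_-(m)$ --- lying on the positive side $\{\vec v^0\cdot\vec\theta>0\}$ of the level-$0$ wall $D_\delta$ with the finite-root chamber $C$ as a face --- that renders $\Phi$ t-exact on the relevant objects, so that ideal sheaves of points go to genuine modules concentrated in a single degree with cyclic framing and $j=0$. This is exactly the delicate point distinguishing $C_-(m)$ from $C_+$: the latter carries the same dimension vector $n\vec v^0$ and framing $\vec w^0$ but produces the orbifold Hilbert scheme $\Hilb^n([\bC^2/\Gamma])$ of Theorem~\ref{thm:Hilb-orbifold-stability-chamber} instead. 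Controlling which King-stability chamber corresponds to the geometric cyclicity condition on $S$, and verifying that $\Phi(\cO_Z)$ is a sheaf rather than a genuine complex for all finite $Z$ (including those meeting the exceptional locus), is where the real work lies.
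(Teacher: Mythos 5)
The paper itself contains no proof of this theorem: it is imported verbatim from \cite{Nakajima2007} and \cite{Kuznetsov2007}, and the only surrounding material is the record of the induced quiver data \eqref{eq:regular-quiver-data}. Measured against the cited route, your overall strategy --- realizing the isomorphism through the derived McKay equivalence, reading quiver data off the tautological bundles $\rV_k$, and reducing everything to a matching of stability chambers --- is essentially Kuznetsov's, and it is consistent with how the paper later uses the equivariant McKay equivalence and the formula $V_k = H^0(\cA_{m-1}, \rV_k \otimes \cO_Z)$; your dimension count is also correct. One side remark: the point you defer (whether the transform of $\cO_Z$ is a sheaf when $Z$ meets the exceptional locus) is actually the easy part, since skyscrapers on $S$ transform to $\Gamma$-clusters and the heart is closed under extensions; the object that genuinely can spread over two cohomological degrees is the transform of the ideal sheaf $\cI_Z$.

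The genuine gap is that the stability matching, which you correctly identify as the crux, is resolved incorrectly, and the error would derail the construction. You assert that $C_-(m)$-stability of $(B,\bar B,i,j)$ with $\mu=0$ amounts to ``$j=0$ and $i$ cyclic.'' Cyclicity of the framing is precisely the stability condition of the \emph{other} chamber $C_+$: it is the defining property of $\Hilb^n([\bC^2/\Gamma])$ in Theorem~\ref{thm:Hilb-orbifold-stability-chamber} and \eqref{eq:orbifold-quiver-data}. If $C_-(m)$-stability were also equivalent to cyclicity, the two stable loci and hence the two GIT quotients would coincide, and your construction would produce the orbifold Hilbert scheme rather than $\Hilb^n(S)$ --- obliterating exactly the distinction the theorem is about. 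Stability in $C_-(m)$ is genuinely weaker. A concrete counterexample using only the paper's data: take $m=n=2$ and the fixed point $(\square,\square) \in \Hilb^2(\cA_1)^{\sT'}$, i.e. $Z = \{p_0, p_1\}$. By \eqref{eq:universal-family-Am} and \eqref{eq:orbifold-box-diagram-fixed-points}, its quiver data is the direct sum of the two one-point representations of Figure~\ref{fig:quiver-fixed-points}: $V_0 = \bC e_0 \oplus \bC f_0$, $V_1 = \bC e_1 \oplus \bC f_1$, the only nonzero arrows being $e_0 \mapsto e_1$ and $f_0 \mapsto f_1$, with the diagonal framing $i(1) = e_0 + f_0$ of section~\ref{sec:quiver-data-as-boxes}. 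The submodule generated by $\im(i)$ is spanned by $e_0+f_0$, $e_1$, $f_1$, of dimension vector $(1,2) \neq (2,2)$: the framing is not cyclic, yet this is a stable point, indeed a $\sT'$-fixed point of $\Hilb^2(\cA_1)$. The correct description of $C_-(m)$-stability is the King criterion \eqref{eq:king-stability-criterion} with $(\vec\theta,\theta_\infty) = (-m+1+\epsilon, 1, \ldots, 1, -\epsilon)$, which permits proper submodules $V' \supseteq \im(i)$ so long as $\sum_{k \ge 1} \dim V'_k > (m-1)\dim V'_0$, rather than forbidding them outright. For the same reason, your claim that surjectivity of $\cO_S \twoheadrightarrow \cO_Z$ ``translates into cyclicity of $i$'' fails: the McKay equivalence does not intertwine generation as an $\cO_S$-module with generation under the quiver maps (the latter are multiplication by $x,y$ on the orbifold side), as the example above shows. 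Repairing the proposal requires redoing both directions of the dictionary with the true King condition for $C_-(m)$; that analysis is the actual content of the cited proofs.
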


Let $Z \subset \cA_{m-1}$ be a length-$n$ subscheme. Then the
corresponding quiver data is
\begin{align} \label{eq:regular-quiver-data}
  \begin{split}
    V_k &= H^0(\cA_{m-1}, \rV_k \otimes \cO_Z) \\
    W_0 &= H^0(\cA_{m-1}, \cO_{\cA_{m-1}}) = \bC,
  \end{split}
\end{align}
where $\rV = \bigoplus \rV_k$ is the universal family for
$\Hilb^1(\cA_{m-1}) = \cA_{m-1}$. The quiver maps between the $V_k$
are induced from the quiver maps of the universal family. For the
framing, $j = 0\colon V_0 \to W_0$ always while $i\colon W_0 \to V_0$
is more subtle. If $(V_0)_0 \subset V_0$ denotes the subspace of
trivial $\sT'$-weight, then for $\Hilb([\bC^2/\Gamma])$ we have $\dim
(V_0)_0 = 1$ with generator $i(1)$. But for $\Hilb(\cA_{m-1})$, it
turns out that $\dim (V_0)_0 > 1$ in general (see
section~\ref{sec:quiver-data-as-boxes}), in which case the framing is
given by the diagonal map $i(1) = (1, 1, \ldots, 1) \in (V_0)_0$.

\subsection{T-equivariant geometry}
\label{sec:hilbert-schemes-equivariant-geometry}

\subsubsection{}

Both $\Hilb([\bC^2/\Gamma])$ and $\Hilb(\cA_{m-1})$ inherit the
standard action of the torus $\sT'$ on $\bC^2$, from
section~\ref{sec:tori-definitions}. K-theoretic computations on these
spaces require understanding $\sT'$-fixed points and their tangent
spaces. In particular, for quasimap theory, it is productive to work
with both Hilbert schemes as moduli of quiver representations, and
therefore it is necessary to determine the quiver data $V = \bigoplus
V_k$ at each fixed point.

\subsubsection{}

The $\Gamma$-equivariant Hilbert scheme of $n$ points is a subscheme
\[ \Hilb^n([\bC^2/\Gamma]) \subset \Hilb^{nm}(\bC^2). \]
Hence $\sT'$-fixed points in $\Hilb^n([\bC^2/\Gamma])$ can be
identified with certain Young diagrams of size $nm$.

\begin{definition}
  The {\it color} or {\it content} of a square $\square = x^i y^j \in
  \lambda$ is
  \[ c(\square) \coloneqq i(\square) - j(\square) \in \bZ/m = \Gamma \]
  and records its $\Gamma$-weight. A {\it uniformly colored} Young
  diagram $\lambda$ has the same number of squares of each color.
\end{definition}

Let $\lambda \in \Hilb^n([\bC^2/\Gamma])^{\sT'}$ be a fixed point. The
$\Gamma$-invariance of $\lambda$ implies it is uniformly colored. From
section~\ref{eq:orbifold-quiver-data}, $V_k$ is therefore the
character of all color-$k$ boxes in $\lambda$. The quiver data for
$\lambda$ will be denoted
\[ V(\lambda) = \bigoplus_k V_k(\lambda). \]

\subsubsection{}

Similarly, a $\sT'$-fixed point $Z \in \Hilb^n(\cA_{m-1})$ is a
collection of $m$ Young diagrams $\lambda_0, \ldots, \lambda_{m-1}$,
one for each $\sT'$-fixed point $p_i \in \cA_{m-1}$, so that
\begin{equation} \label{eq:structure-sheaf-regular-Hilb}
  \cO_Z\big|_{U_a} = \bC[x_a, y_a] / I(\lambda_a).
\end{equation}
The ideal $I(\lambda_a)$ has generators $\{x_a^{i(\square)}
y_a^{j(\square)}\}_{\square \in \lambda_a}$. Since $Z$ has length $n$
we require $\sum_{a=0}^{m-1} |\lambda_a| = n$, but otherwise the
$\lambda_a$ can be arbitrary Young diagrams. From now on, we will
abuse notation and write
\[ Z = \vec\lambda \coloneqq (\lambda_0, \ldots, \lambda_{m-1}) \]
to denote the fixed point. The quiver data for $\vec\lambda$ will be
denoted
\[ V(\vec\lambda) = \bigoplus_a V_a(\vec\lambda). \]
To describe $V_a(\vec\lambda)$ combinatorially, using
\eqref{eq:regular-quiver-data}, it remains to determine the universal
family $\rV$ on $\cA_{m-1}$.

\subsubsection{}
\label{sec:universal-family-Am}

Since $\cA_{m-1}$ is toric, an easy way to determine (equivariantly)
the universal line bundles $\rV_k$ on $\cA_{m-1}$ is to identify their
weights at the fixed points $p_a$. This can be done from first
principles by explicitly determining the corresponding $\sT'$-fixed
quiver representations.

\begin{proposition}
  For $0 \le a, k < m$,
  \begin{equation} \label{eq:universal-family-Am}
    \rV_k\big|_{p_a} = \begin{cases} y^{m-k} & a < k \\ x^k & a \ge k. \end{cases}
  \end{equation}
\end{proposition}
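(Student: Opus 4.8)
The plan is to compute, for each of the $m$ torus-fixed points $p_a \in \cA_{m-1}$, the $\sT'$-fixed quiver representation it defines under the identification $\cA_{m-1} = \cM_{\vec\theta}(\vec v^0, \vec w^0)$ of Theorem~\ref{thm:Hilb-regular-stability-chamber} (with $\vec\theta \in C_-(m)$), and then read off the $\sT'$-weight of $V_k$. Since $\vec v^0 = (1,\ldots,1)$, each $\rV_k$ is a line bundle whose fiber at a fixed point is $V_k$, so $\rV_k\big|_{p_a}$ is exactly the weight $\chi_k$ of $V_k$ in that representation; thus it suffices to determine these representations. I write the representation as $V_k \cong \bC$ for $0 \le k < m$, with the $x$-arrows $X_k\colon V_k \to V_{k+1}$ of weight $x$ and the $y$-arrows $Y_k\colon V_{k+1}\to V_k$ of weight $y$ (per the McKay dictionary recalled after \eqref{eq:regular-quiver-data}), framing $i\colon \bC \to V_0$, and $j = 0$.

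First I would impose $\sT'$-fixedness together with the moment map. Fixedness produces weights $\chi_k$ on each $V_k$, and since stability forces $i \ne 0$, equivariance of $i$ pins $\chi_0 = 1$. Writing the scalar maps as $X_k = \xi_k$, $Y_k = \eta_k$, the moment map at vertex $k$ reads $\xi_{k-1}\eta_{k-1} = \xi_k \eta_k$, so the product $\xi_k\eta_k$ is a vertex-independent constant $c$. As a $G_V$-invariant it has $\sT'$-weight $xy$, a nontrivial character; hence at a fixed point $c = 0$, so on each edge at most one of $X_k, Y_k$ is nonzero.

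Next I would use stability. For $\vec\theta = (-m+1+\epsilon, 1, \ldots, 1)\in C_-(m)$ as in \eqref{eq:regular-stability-chamber}, King's criterion forces $V$ to be generated by $i(1)\in V_0$; combined with the previous step, the $m$ edges must organize into a forward chain of $x$-arrows $0 \to 1 \to \cdots \to a$ and a backward chain of $y$-arrows $0 \to m-1 \to \cdots \to a+1$ with a single empty edge between $a$ and $a+1$, for some $a\in\{0,\ldots,m-1\}$. Propagating $\chi_0 = 1$ through the equivariance of the nonzero arrows ($\chi_{k+1} = x\,\chi_k$ across an $x$-arrow, $\chi_k = y\,\chi_{k+1}$ across a $y$-arrow) yields $\chi_k = x^k$ for $0 \le k \le a$ and $\chi_k = y^{m-k}$ for $a < k < m$. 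A direct check of King's inequalities (every proper $B$-invariant subrepresentation is supported on a suffix of one of the two chains and hence has strictly positive $\vec\theta$-weight, while $V$ is generated by $i$ so no proper subrepresentation contains $\im i$) confirms each such representation is stable; as they are pairwise distinct and $\cA_{m-1}$ has exactly $m$ torus-fixed points, these are all of them.

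Finally I would match the representation with empty edge $a$ to the geometric fixed point $p_a$ and conclude $\rV_k\big|_{p_a} = \chi_k$, which is \eqref{eq:universal-family-Am}. The identification of labels is the one genuinely fiddly point: I would compute the $\sT'$-character of $T_{[\,\cdot\,]}\cM_{\vec\theta}$ from the quiver data and check it equals $x_a + y_a$, the tangent weights of $\cA_{m-1}$ at $p_a$ read from Figure~\ref{fig:toric-diagram-Am-C}; as a consistency test one can instead verify $\deg_{E_j}\rV_k = \delta_{jk}$ using $\cO_{\bP^1}(d)\big|_\infty = w^d\,\cO_{\bP^1}(d)\big|_0$ with $w$ the weight of the coordinate along $E_j$ at the relevant endpoint, which already forces the labeling to be the stated one. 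The main obstacle is thus not the weight computation itself but this bookkeeping: correctly extracting King stability in the chamber $C_-(m)$ and verifying that the combinatorial index $a$ of the empty edge coincides with the index $a$ of the toric chart $(x_a, y_a)$, rather than some permutation of it.
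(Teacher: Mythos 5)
Your overall route is the same as the paper's: identify $\sT'$-fixed points of $\cM_{\vec\theta}(\vec v^0,\vec w^0)$ with $\sT'$-fixed stable quiver representations, use equivariance to constrain which arrows can be nonzero, use King stability in the chamber $C_-(m)$ (with the extended parameter $\theta_\infty$) to force generation by $i(1)\in V_0$, and propagate the weight $\chi_0=1$ along the nonzero arrows. Your stability verification agrees with the paper's, and your worry about matching the representation with empty edge $a$ to the chart $U_a$ is legitimate and is settled by either of your proposed checks; the paper in fact glosses over this point. (For the record, computing the tangent character from \eqref{eq:tangent-space-quiver-variety} at the representation labeled $a$ gives $x_a^{-1}+y_a^{-1}$, which is the character of $T_{p_a}\cA_{m-1}$ in the paper's conventions, where the weights $x_a, y_a$ are carried by coordinate functions rather than tangent vectors — exactly the bookkeeping you flagged.)

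There is, however, one genuine gap. You claim that ``at most one of $X_k,Y_k$ is nonzero on each edge'' (from the moment map and the weight of the invariant $c$) combined with generation by $i(1)$ forces the two-chain structure with a single empty edge. That implication is false as stated: the configuration in which \emph{all} $m$ edges carry nonzero $x$-arrows — the full directed cycle — satisfies both constraints (all $\eta_k=0$, so $c=0$, and every vertex is reachable from $V_0$), and it is even a stable point of $\cM_{\vec\theta}$ satisfying the moment map equation; it simply fails to be $\sT'$-fixed. Ruling it out requires the weight argument applied to cycles of length $m$, not only to the $2$-cycles $\xi_k\eta_k$: the composition around the loop is a $G_V$-invariant of weight $x^m$ (resp.\ $y^m$), hence must vanish at a fixed point, so some arrow in the loop is zero. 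Equivalently, your own weight propagation around a full cycle would give $\chi_0 = x^p y^{-q}\chi_0$ with $p+q=m>0$, a contradiction. This is precisely the paper's key observation — at a $\sT'$-fixed point every non-trivial cycle of arrows vanishes, so the support of the representation is a tree — and it is strictly stronger than what your moment-map computation delivers. With that one-line repair, your proof is complete and coincides with the paper's.
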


Non-equivariantly, this means $\rV_0 = \cO_{\cA_{m-1}}$ and $\rV_k =
\cO_{E_k}(1)$ for $1 \le k < m$. This is in agreement with \cite[Lemma
  2.1]{Kapranov2000}, where it is shown that $\rV_k = \cO_{E_k}(1)
\oplus \cO_{E_k}^{\dim R_k - 1}$ for $k \neq 0$ in the general ADE
case.

\begin{proof}
  The dimension vector is $\vec v^0 = (1, 1, \ldots, 1)$, so all
  vector spaces $V_i$ are one-dimensional. In this setting, the key
  observation is that non-trivial cycles in the quiver, namely any
  composition of arrows which starts and ends at the same vertex $v$,
  must be zero. Such a non-zero cycle would have a non-trivial
  $\sT'$-weight which cannot be canceled by the $G_V$ freedom at $v$,
  and therefore cannot be $\sT'$-fixed. Hence only trees can be
  $\sT'$-fixed, and it suffices to figure out which maps $B_{ij}$ or
  $\bar B_{ij}$ are non-zero. (The $G_V$ freedom can be used to scale
  all maps in non-zero trees to $1$.)

  GIT stability for framed quiver representations has a reformulation
  in terms of sub-object stability, by work of King and Crawley-Bovey
  \cite[Section 3.2]{Ginzburg2012}. In our setting, we need to extend
  the stability condition by $\theta_\infty \coloneqq -{\vec\theta}
  \cdot \dim V$ to include the extra framing vertex $W_0$. Then the
  reformulated stability criterion is that
  \begin{equation} \label{eq:king-stability-criterion}
    ({\vec\theta}, \theta_\infty) \cdot (\dim V', \dim W_0') > 0
  \end{equation}
  for non-trivial sub-representations $(V', W_0')$ (see the proof of
  \cite[Lemma 7.2.10]{Okounkov2017}). For us, using
  \eqref{eq:regular-stability-chamber}, this extended stability
  condition is
  \[ ({\vec\theta}, \theta_\infty) = (-m+1+\epsilon, 1, 1, \ldots, 1, -\epsilon), \]
  and $\dim V'_0 \in \{0, 1\}$. It follows that $V_0$ must generate
  all other $V_k$. So the only $\sT'$-fixed quiver representations are
  those in Figure~\ref{fig:quiver-fixed-points}, with characters
  \begin{equation} \label{eq:m-hook}
    \hk_a \coloneqq y^{m-a} + y^{m-a-1} + \cdots + y + 1 + x + \cdots + x^a
  \end{equation}
  for $0 \le a < m$. Each of these characters is the restriction of
  the universal family $\rV$ to a fixed point $p_a$; the line bundle
  $\rV_k$ is the summand with $\bZ/m$-weight $k$.
\end{proof}

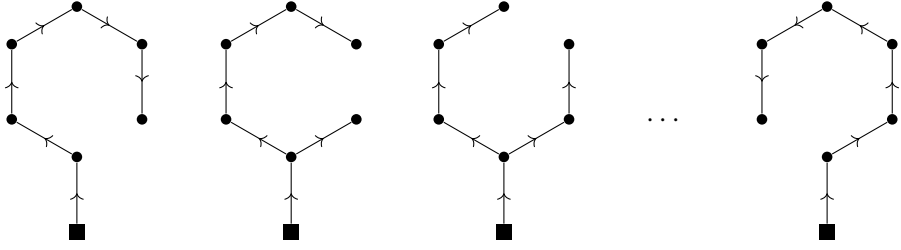
\begin{figure}[h]
  \centering
  \begin{tabular}{ccccc}
    \begin{tikzpicture}[decoration={markings,mark=at position 0.5 with {\arrow{>}}},rotate=-90]
      \node[vertex,shape=rectangle,minimum size=6pt] (w) at (2,0) {};
      \node[vertex] (v0) at (0:1) {};
      \node[vertex] (v1) at (60:1) {};
      \node[vertex] (v2) at (120:1) {};
      \node[vertex] (v3) at (180:1) {};
      \node[vertex] (v4) at (240:1) {};
      \node[vertex] (v5) at (300:1) {};
      \draw[postaction={decorate}] (w) -- (v0);
      \draw[postaction={decorate}] (v0) -- (v5);
      \draw[postaction={decorate}] (v5) -- (v4);
      \draw[postaction={decorate}] (v4) -- (v3);
      \draw[postaction={decorate}] (v3) -- (v2);
      \draw[postaction={decorate}] (v2) -- (v1);
    \end{tikzpicture} \hspace{1em}
    &
    \begin{tikzpicture}[decoration={markings,mark=at position 0.5 with {\arrow{>}}},rotate=-90]
      \node[vertex,shape=rectangle,minimum size=6pt] (w) at (2,0) {};
      \node[vertex] (v0) at (0:1) {};
      \node[vertex] (v1) at (60:1) {};
      \node[vertex] (v2) at (120:1) {};
      \node[vertex] (v3) at (180:1) {};
      \node[vertex] (v4) at (240:1) {};
      \node[vertex] (v5) at (300:1) {};
      \draw[postaction={decorate}] (w) -- (v0);
      \draw[postaction={decorate}] (v0) -- (v1);
      \draw[postaction={decorate}] (v0) -- (v5);
      \draw[postaction={decorate}] (v5) -- (v4);
      \draw[postaction={decorate}] (v4) -- (v3);
      \draw[postaction={decorate}] (v3) -- (v2);
    \end{tikzpicture} \hspace{1em}
    &
    \begin{tikzpicture}[decoration={markings,mark=at position 0.5 with {\arrow{>}}},rotate=-90]
      \node[vertex,shape=rectangle,minimum size=6pt] (w) at (2,0) {};
      \node[vertex] (v0) at (0:1) {};
      \node[vertex] (v1) at (60:1) {};
      \node[vertex] (v2) at (120:1) {};
      \node[vertex] (v3) at (180:1) {};
      \node[vertex] (v4) at (240:1) {};
      \node[vertex] (v5) at (300:1) {};
      \draw[postaction={decorate}] (w) -- (v0);
      \draw[postaction={decorate}] (v0) -- (v1);
      \draw[postaction={decorate}] (v1) -- (v2);
      \draw[postaction={decorate}] (v0) -- (v5);
      \draw[postaction={decorate}] (v5) -- (v4);
      \draw[postaction={decorate}] (v4) -- (v3);
    \end{tikzpicture} \hspace{1em} 
    &
    $\cdots$ \hspace{1em}
    &
    \begin{tikzpicture}[decoration={markings,mark=at position 0.5 with {\arrow{>}}},rotate=-90]
      \node[vertex,shape=rectangle,minimum size=6pt] (w) at (2,0) {};
      \node[vertex] (v0) at (0:1) {};
      \node[vertex] (v1) at (60:1) {};
      \node[vertex] (v2) at (120:1) {};
      \node[vertex] (v3) at (180:1) {};
      \node[vertex] (v4) at (240:1) {};
      \node[vertex] (v5) at (300:1) {};
      \draw[postaction={decorate}] (w) -- (v0);
      \draw[postaction={decorate}] (v0) -- (v1);
      \draw[postaction={decorate}] (v1) -- (v2);
      \draw[postaction={decorate}] (v2) -- (v3);
      \draw[postaction={decorate}] (v3) -- (v4);
      \draw[postaction={decorate}] (v4) -- (v5);
    \end{tikzpicture}
  \end{tabular}
  \caption{$\sT'$-fixed quiver representations for $\cA_5$ (only non-zero maps shown)}
  \label{fig:quiver-fixed-points}
\end{figure}

\subsubsection{}
\label{sec:quiver-data-as-boxes}

Plugging \eqref{eq:structure-sheaf-regular-Hilb} and
\eqref{eq:universal-family-Am} into \eqref{eq:regular-quiver-data}
yields the following description for $V = \bigoplus_k V_k$. Given a
fixed point $\vec\lambda = (\lambda_0, \ldots, \lambda_{m-1}) \in
\Hilb(\cA_{m-1}, n)$,
\begin{equation} \label{eq:orbifold-box-diagram-fixed-points}
  V(\vec\lambda) = \sum_{a=0}^{m-1} V^{(a)}(\vec\lambda), \quad V^{(a)}(\vec\lambda) \coloneqq \hk_a \sum_{\square \in \lambda_a} x_a^{i(\square)} y_a^{j(\square)}.
\end{equation}
In terms of diagrams, this means to take each $\lambda_a$ and draw it
using $(x_a, y_a)$ coordinates, but with each box replaced with the
$m$-hook $\hk_a$ from \eqref{eq:m-hook}. Then $V$ is the sum of all
the resulting diagrams.
Figure~\ref{fig:quiver-data-fixed-point-example} illustrates an
example. Note that $V$ may have multiple boxes with the {\it same}
$\sT'$-weight, which is not a phenomenon that occurs for
$\Hilb([\bC^2/\Gamma])$. We indicate this on diagrams by labeling
boxes with their multiplicities whenever the multiplicity is greater
than $1$.
\begin{figure}[h]
  \centering
  \begin{tikzpicture}[scale=0.5]
    \begin{scope}
      \foreach \x/\y in {0/0, 0/1, -2/1, -1/1, 1/0, 2/0} {
        \draw (\y,\x) rectangle ++(1,1);
      }
      \draw[->,thick] (0,0) -- (4,0) node[label=right:$x$]{};
      \draw[->,thick] (0,0) -- (0,4) node[label=above:$y$]{};
    \end{scope}
    \node at (5.5,2) {$+$};
    \begin{scope}[xshift=7cm]
      \foreach \x/\y in {0/0, 0/1, 0/2} {
        \draw (\y,\x) rectangle ++(1,1);
      }
      \draw[->,thick] (0,0) -- (4,0) node[label=right:$x$]{};
      \draw[->,thick] (0,0) -- (0,4) node[label=above:$y$]{};
    \end{scope}
    \node at (13,2) {$=$};
    \begin{scope}[xshift=15cm]
      \foreach \x/\y/\c in {0/0/2, 0/1/2, 0/2/1, -2/1/1, -1/1/1, 1/0/1, 2/0/1} {
        \draw (\y,\x) rectangle ++(1,1);
        \ifnum \c > 1 {
          \node at (0.5+\y, 0.5+\x) {\c};
        } \else {}
        \fi;
      }
      \draw[->,thick] (0,0) -- (4,0) node[label=right:$x$]{};
      \draw[->,thick] (0,0) -- (0,4) node[label=above:$y$]{};
    \end{scope}
  \end{tikzpicture}
  \caption{Quiver data for ({\tiny\protect\yng(2)}, $\emptyset$, {\tiny\protect\yng(1)}) $\in \Hilb(\cA_2)$}
  \label{fig:quiver-data-fixed-point-example}
\end{figure}

As a quiver representation, $V(\vec\lambda)$ is clearly also a sum of
the quiver representations $V^{(a)}(\vec\lambda)$. Each of these
quiver representations has a non-zero multiplication by $x$ (resp.
$y$) map whenever a box has a neighboring box to its right (resp.
above it). In other words, each box in $V^{(a)}(\vec\lambda)$
generates all boxes above and to the right of it.

\begin{example}
  The quiver representation corresponding to
  Figure~\ref{fig:quiver-data-fixed-point-example} is
  \[ \begin{tikzcd}[ampersand replacement=\&]
    \& \bC \ar{d}{{\begin{psmallmatrix} 1 \\ 1 \\ 0 \end{psmallmatrix}}} \\
    \& \bC \oplus \bC \oplus \bC xy^{-2} \ar[bend left=5]{ddr}{{\begin{psmallmatrix} & & y \\ & 0 &  \\ y & & \end{psmallmatrix}}} \ar[swap,bend right=5]{ddl}{{\begin{psmallmatrix} x \\ & x \\ & & 0 \end{psmallmatrix}}} \\ \\
    \bC x \oplus \bC x \oplus \bC y^2 \ar[swap,bend right=5]{uur}{0} \ar[swap,bend right=3]{rr}{{\begin{psmallmatrix} 0 \\ & x \\ & & 0 \end{psmallmatrix}}} \& \& \bC xy^{-1} \oplus \bC x^2 \oplus \bC y \ar[bend left=5]{uul}{0} \ar[swap,bend right=3]{ll}{{\begin{psmallmatrix} y \\ & 0 \\ & & y \end{psmallmatrix}}}.
  \end{tikzcd} \]
\end{example}

\subsubsection{}

In fact $\sT'$-fixed points for $\Hilb([\bC^2/\Gamma])$ and
$\Hilb(\cA_{m-1})$ have a strong relationship.
Theorem~\ref{thm:hyperkahler-rotation-diffeomorphism} implies there is
a bijection between $\sT'$-fixed points of these two spaces which is
$\sA'$-equivariant, i.e. the $\sA'$-characters of the corresponding
quiver data must be equal. This bijection was described
non-equivariantly in \cite[Theorem 5.3]{Kuznetsov2007} and
$\sA'$-equivariantly in \cite[Theorem 4.5]{Nagao2009}, but now we can
also see how the fully equivariant $\sT'$-characters of fixed points
compare.

Recall that an $m$-colored partition $\lambda$ can also be described
by its {\it $m$-core} $c(\lambda)$ and {\it $m$-quotient}
$(q_0(\lambda), \ldots, q_{m-1}(\lambda))$ (see \cite[Section
  2.7]{James1981} for an introduction). Uniformly $m$-colored
partitions are precisely those with trivial $m$-core, so let
\begin{align*}
  \vec q\colon \left\{\text{uniformly } m\text{-colored partitions}\right\} &\xrightarrow{\sim} \left\{\text{all partitions}\right\}^{\times m} \\
  \lambda &\mapsto (q_0(\lambda), \ldots, q_{m-1}(\lambda))
\end{align*}
be the bijection.

\begin{lemma}[{\cite[Proposition 2.5.3]{Nagao2009}}] \label{lem:orbifold-fixed-point-bijection}
  The bijection $\vec q$ is exactly the bijection
  \[ \Hilb([\bC^2/\Gamma])^{\sT'} \cong \Hilb(\cA_{m-1})^{\sT'} \]
  under our identification of fixed points with Young diagrams, and
  \begin{equation} \label{eq:A-equivariant-fixed-point-bijection}
    V_k(\lambda) \equiv V_k(\vec q(\lambda)) \bmod{\hbar}
  \end{equation}
\end{lemma}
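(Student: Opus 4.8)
Throughout I read the congruence \eqref{eq:A-equivariant-fixed-point-bijection} as an equality after restriction to the Calabi--Yau torus $\sA'$, i.e.\ after the specialization $\hbar = 1/xy \mapsto 1$, under which $x \mapsto t$, $y \mapsto t^{-1}$; this is exactly the $\sA'$-equivariance referred to just before the lemma. The plan is to (i) reduce the character statement to a single combinatorial identity about box \emph{contents}, (ii) prove that identity by peeling $m$-ribbons, and (iii) upgrade it to the stated equality of bijections using that $\sA'$-characters separate $\sT'$-fixed points. For a box $\square = x^{i}y^{j}$ write $\mathrm{ct}(\square) = i(\square)-j(\square)$ for its integer content, so its color is $\mathrm{ct}(\square)\bmod m$ and its $\sA'$-weight is $t^{\mathrm{ct}(\square)}$. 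On the orbifold side \eqref{eq:orbifold-quiver-data} gives $V_k(\lambda)|_{\sA'} = \sum_{\square\in\lambda,\ \mathrm{ct}(\square)\equiv k} t^{\mathrm{ct}(\square)}$. On the resolution side I restrict \eqref{eq:orbifold-box-diagram-fixed-points}: from $x_a = x^{a+1}y^{-m+a+1}$ and $y_a = x^{-a}y^{m-a}$ one gets $x_a|_{\sA'} = t^{m}$ and $y_a|_{\sA'} = t^{-m}$ for every $a$, while \eqref{eq:universal-family-Am} gives $\rV_k|_{p_a}\big|_{\sA'} = t^{s_a(k)}$ with $s_a(k)=k$ for $a\ge k$ and $s_a(k)=k-m$ for $a<k$, so that
\[ \hk_a\big|_{\sA'} = \sum_{k=0}^{m-1} t^{s_a(k)} = \sum_{s=a-m+1}^{a} t^{s} \]
is a window forming a complete residue system mod $m$.

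Extracting the residue-$k$ part of $\hk_a|_{\sA'}$ (the single monomial $t^{s_a(k)}$) shows that the color-$k$ statement \eqref{eq:A-equivariant-fixed-point-bijection} is precisely the residue-$k$ component of the ungraded identity
\begin{equation} \label{eq:content-factorization}
  \sum_{\square\in\lambda} t^{\mathrm{ct}(\square)} = \sum_{a=0}^{m-1}\Big(\sum_{s=a-m+1}^{a} t^{s}\Big)\sum_{\square'\in q_a} t^{m\,\mathrm{ct}(\square')},\qquad (q_0,\dots,q_{m-1})=\vec q(\lambda).
\end{equation}
So it suffices to prove \eqref{eq:content-factorization}. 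Here the trivial $m$-core of $\lambda$ (uniform coloring) is what removes any core-correction term.

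To prove \eqref{eq:content-factorization} I would induct on $n = |q_0|+\cdots+|q_{m-1}|$, peeling one $m$-ribbon at a time. The abacus description of the $m$-quotient identifies removals of an $m$-ribbon (border strip of size $m$) from $\lambda$ with removals of a single corner box $\square'$ from exactly one component $q_a$, the result again having trivial $m$-core and quotient $(q_0,\dots,q_a\setminus\square',\dots,q_{m-1})$. The key geometric fact is that a border strip of size $m$ occupies $m$ consecutive diagonals, so the contents of its boxes form a window of $m$ consecutive integers; with the standard labelling this window is exactly $\{a-m+1,\dots,a\}+m\,\mathrm{ct}(\square')$. Hence the removed ribbon contributes $\big(\sum_{s=a-m+1}^{a} t^{s}\big)\,t^{m\,\mathrm{ct}(\square')}$ to the left-hand side of \eqref{eq:content-factorization}, which is exactly the term indexed by $\square'$ on the right. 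Summing over all peeled ribbons, with base case $\lambda=\emptyset$, gives \eqref{eq:content-factorization}.

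Finally I would upgrade to the statement about bijections. By the $\sA'$-equivariance of the hyperkähler-rotation diffeomorphism (Theorem~\ref{thm:hyperkahler-rotation-diffeomorphism}, made explicit in \cite[Theorem 4.5]{Nagao2009}), the geometric fixed-point bijection sends $\lambda$ to the unique $\vec\mu$ with $V_k(\lambda)|_{\sA'}=V_k(\vec\mu)|_{\sA'}$ for all $k$; the computation above shows $\vec q(\lambda)$ has exactly these $\sA'$-characters. It then remains to check that a $\sT'$-fixed point is determined by the graded collection $\big(V_k(\cdot)|_{\sA'}\big)_k$: on the orbifold side this is immediate since these characters recover all diagonal lengths and hence $\lambda$, and on the resolution side the overlapping windows $\hk_a|_{\sA'}$ can be disentangled into the individual content generating functions of the $\mu_a$ by a telescoping argument using that contents of a partition are bounded. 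This forces $\vec q$ to coincide with the geometric bijection, proving the lemma.

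The only place real care is needed is the bookkeeping in \eqref{eq:content-factorization}: pinning down that the $m$-ribbon attached to $\square'\in q_a$ has content window $\{a-m+1,\dots,a\}+m\,\mathrm{ct}(\square')$ under the \emph{same} labelling of quotient components, fixed points $p_a$, and colors used in \eqref{eq:orbifold-box-diagram-fixed-points}. Matching these orientation and sign conventions (equivalently, normalizing $\vec q$ so that runner $a$ corresponds to $p_a$ and to the color singled out by $\hk_a$) is exactly what makes the residue-$k$ shift come out as $s_a(k)$ rather than some permutation of it, and is the step I expect to be the main obstacle.
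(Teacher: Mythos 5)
The paper contains no proof of this lemma at all: it is imported verbatim from \cite[Proposition 2.5.3]{Nagao2009}, so there is no internal argument to compare yours against, and your proposal is best judged as a self-contained replacement for that citation. Judged as such, it is correct. The reduction to the ungraded content identity is legitimate, since both sides of \eqref{eq:A-equivariant-fixed-point-bijection} decompose by residue class mod $m$ and the residue-$k$ part of the window $\hk_a|_{\sA'}$ is the single monomial $t^{s_a(k)}$; the abacus/ribbon-peeling argument is the standard proof of that identity (a size-$m$ border strip covers $m$ consecutive diagonals, and moving a bead down one step on runner $a$ removes a corner box of $q_a$); and your final separation step does work: writing $D_a(u) \coloneqq \sum_{\square' \in \mu_a} u^{\mathrm{ct}(\square')}$ with $u = t^m$, the color-$k$ character equals $t^k\bigl(\sum_{a \ge k} D_a + u^{-1}\sum_{a < k} D_a\bigr)$, so consecutive differences in $k$ yield $(1-u^{-1})D_k$, which determines each $D_k$ (Laurent polynomials form an integral domain) and hence each $\mu_a$ via its diagonal lengths. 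Two comments. First, the convention-matching you flag as the main obstacle is real but self-resolving: because the windows $\{a-m+1,\dots,a\}$ are pairwise distinct and the $\sA'$-characters separate fixed points (by your own argument), there is exactly one labeling of the quotient components for which the identity can hold, and your computation is precisely what pins it down; the paper itself fixes the normalization of $\vec q$ no more precisely than this. Second, you implicitly (and correctly) use the $m$-term hook $1 + x + \cdots + x^a + y + \cdots + y^{m-a-1}$ dictated by \eqref{eq:universal-family-Am}, rather than formula \eqref{eq:m-hook} as literally printed, which has $m+1$ terms; that off-by-one in the paper would otherwise break your window count, and it is worth stating the correction explicitly. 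The only ingredient you take on faith is the $\sA'$-equivariance of the geometric fixed-point bijection, which the paper asserts (via Theorem~\ref{thm:hyperkahler-rotation-diffeomorphism}) in the paragraph immediately preceding the lemma, so that reliance is legitimate.
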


Importantly, \eqref{eq:A-equivariant-fixed-point-bijection} is only
true mod $\hbar$, i.e. only in $K_{\sA'}(\pt)$, and is {\it not} true
in $K_{\sT'}(\pt)$. This means $V(\lambda)$ and $V(\vec q(\lambda))$
may differ (and can only differ) by shifting boxes along the diagonals
of same color. More precisely, to get the Young diagram represented by
$V(\lambda)$ from $V(\vec q(\lambda))$, we shift boxes along their
diagonals until we get a valid Young diagram, as in
Figure~\ref{fig:T-equivariant-fixed-point-correspondence}.
\begin{figure}[h]
  \centering
  \begin{tikzpicture}[scale=0.5]
    \begin{scope}
      \foreach \x/\y/\c in {0/0/3, 0/1/3, 0/2/1, -1/1/1, -2/1/1, 1/0/2, 2/0/2, 2/-1/1, 3/-1/1} {
        \draw (\y,\x) rectangle ++(1,1);
        \ifnum \c > 1 {
          \node at (0.5+\y, 0.5+\x) {\c};
          \draw[->,blue,dotted] (0.8+\y,0.8+\x) -- ++(\c-1.4,\c-1.4);
        } \else {
          \draw[->,blue,dotted] (0.8+\y,0.8+\x) -- ++(\c-0.4,\c-0.4);
        } \fi;
      }
      \draw[->,thick] (0,0) -- (6,0) node[label=right:$x$]{};
      \draw[->,thick] (0,0) -- (0,6) node[label=above:$y$]{};
    \end{scope}
    \node at (9,3) {$\mapsto$};
    \begin{scope}[xshift=13cm]
      \foreach \x/\n in {0/4, 1/4, 2/4, 3/2, 4/1} {
        \foreach \y in {1,...,\n} {
          \draw (\y,\x) rectangle ++(-1,1);
        }
      }
      \draw[->,thick] (0,0) -- (6,0) node[label=right:$x$]{};
      \draw[->,thick] (0,0) -- (0,6) node[label=above:$y$]{};
    \end{scope}
  \end{tikzpicture}
  \caption{$\sT'$-equivariant correspondence for ({\tiny\protect\yng(2)}, {\tiny\protect\yng(1,1)}, {\tiny\protect\yng(1)}) $\in \Hilb(\cA_2)$}
  \label{fig:T-equivariant-fixed-point-correspondence}
\end{figure}

As for obtaining $V(\vec q(\lambda))$ from $V(\lambda)$, the easiest
way is somewhat indirect: compute $\vec q(\lambda)$ directly from
$V(\lambda)$ and then apply
\eqref{eq:orbifold-box-diagram-fixed-points}. It is not clear if there
is a nice combinatorial description of which boxes in $V(\lambda)$
need to be shifted (and by how much) to obtain $V(\vec q(\lambda))$
directly.

\subsubsection{}

There is a universal formula for the tangent space of both
$\Hilb([\bC^2/\Gamma])$ and $\Hilb(\cA_{m-1})$ at fixed points. This
is because they are both open subsets of the stack
$[\mu^{-1}(0)/G_V]$, whose tangent bundle in K-theory can be written
as follows. If $V$ is the quiver data of a fixed point, then
\begin{equation} \label{eq:tangent-space-quiver-variety}
  T_V\Hilb = \overbrace{T^*\left(V_0 + \frac{1}{x}\sum_{i=0}^{m-1} \Hom(V_i, V_{i+1})\right)}^{\text{deformations of quiver rep}} - \overbrace{\left(1 + \hbar\right) \sum_{i=0}^{m-1} \Hom(V_i, V_i)}^{\text{moment map and quotient}}.
\end{equation}
Note that $T^*V_i = V_i + \hbar V_i^\vee$. The $x$ factor in
front of $\Hom(V_i, V_{i+1})$ is due to the multiplication-by-$x$
maps.

\subsection{The quasimap vertex}
\label{sec:quasimap-theory}

\subsubsection{}

To consider maps from a curve $C$ to a Nakajima quiver variety, we
upgrade from quiver representations into vector spaces to quiver
representations into vector bundles on $C$. Such objects are called
{\it quiver bundles} \cite{Gothen2005}.

\begin{definition}
  For two vector bundles $\cV = \bigoplus \cV_i$ and $\cW = \bigoplus
  \cW_i$ on $C$, \eqref{eq:quiver-rep-vector-spaces} can be upgraded
  to
  \[ \cRep_Q^{\text{framed}} \coloneqq \bigoplus_{\text{edge } i \to j} \cHom(\cV_i, \cV_j) \oplus \bigoplus_{i \in I} \cHom(\cW_i, \cV_i). \]
  A section $s$ of $T^*\cRep_Q^{\text{framed}}$ satisfying $\mu(s) =
  0$ is a {\it quasimap} in the sense of \cite{Ciocan-Fontanine2014};
  it is equivalent to a map
  \[ f\colon C \to [\mu^{-1}(0)/G_V]. \]
  Since we want maps to the open locus $\cM_{\vec\theta} \subset
  [\mu^{-1}(0)/G_V]$, we say a quasimap is {\it singular} at $p \in C$
  if $f(p) \notin \cM_{\vec\theta}$. If $f$ is singular only at
  finitely many points on $C$, it is a {\it stable} quasimap.
\end{definition}

\subsubsection{}
\label{sec:quasimap-vertex}

Let $\QMaps(\cM_{\vec\theta})$ denote the moduli of stable quasimaps to the
Nakajima quiver variety $\cM_{\vec\theta}$ for $C = \bP^1$. Consider the
open locus
\[ \QMaps(\cM_{\vec\theta})_{\text{nonsing } \infty} \subset \QMaps(\cM_{\vec\theta}) \]
consisting of all quasimaps which are non-singular at $\infty \in
\bP^1$. On this locus there is a well-defined evaluation map
\[ \ev_\infty\colon \QMaps(\cM_{\vec\theta})_{\text{nonsing }\infty} \to \cM_{\vec\theta} \]
sending $f$ to $\ev_\infty(f) \coloneqq f(\infty)$.

\begin{definition}[{\cite[Section 7.2]{Okounkov2017}}]
  The {\it quasimap vertex} is the series
  \[ \cat{V}_{\cat{QMaps}}(\vec \fz) \coloneqq \sum_{\vec d} \vec \fz^{\vec d} \ev_{\infty,*}\left(\QMaps^{\vec d}, \hat\cO^{\vir}\right) \in K_{\sT}(\cM_{\vec\theta})_{\text{localized}}[[\vec \fz^{\pm 1}]]. \]
  The variables $\vec\fz \coloneqq (\fz_0, \ldots, \fz_{m-1})$ record
  the degree $\deg f \coloneqq (\deg \cV_i) \in \bZ^m$ of the
  quasimap, which indexes connected components $\QMaps^{\vec d}
  \subset \QMaps(\cM_{\vec\theta})_{\text{nonsing }\infty}$.
\end{definition}

\begin{remark}
  Quasimaps to $\Hilb([\bC^2/\Gamma])$ are studied modulo
  $\sA'$-equivariance in \cite{Dinkins2019}, where a formula for the
  quasimap vertex is given for the general $A_\infty$ quiver (which is
  a limiting case of affine $A_n$ quivers). The extension to full
  $\sT$-equivariance is straightforward. However, note that while the
  stability chamber \eqref{eq:orbifold-stability-chamber} defining
  $\Hilb([\bC^2/\Gamma])$ is valid on the $A_\infty$ quiver, the
  stability chamber \eqref{eq:regular-stability-chamber} defining
  $\Hilb(\cA_{m-1})$ is not. It truly depends on $m$ and therefore
  cannot be investigated using the $A_\infty$ quiver.
\end{remark}

\subsubsection{}

The domain $C = \bP^1$ for quasimaps has an action by the
$\bC^\times_z$ in $\sT$ in \eqref{eq:full-torus-weights}, and
corresponds to the $C$ in $Y = S \times C$. For $p \in
\cM_{\vec\theta}$, let
\[ \QMaps_p \coloneqq \ev_\infty^{-1}(p). \]
It has components $\QMaps_p^{\vec d}$ consisting of quasimaps of
degree $\vec d$. It is known \cite[Corollary 7.2.5]{Okounkov2017} that
the summands of
\[ \QMaps(\cM_{\vec\theta})^{\sT}_{\text{nonsing }\infty} = \bigoplus_{p \in \cM_{\vec\theta}^{\sT}} \bigoplus_{\vec d} \left(\QMaps_p^{\vec d}\right)^{\bC^\times_z} \]
are proper, and therefore the $p$-th component
\begin{equation} \label{eq:quasimap-vertex-definition}
  \sV^p_{\QMaps}(\vec\fz) = \sum_{\vec d} \chi\left(\QMaps_p^{\vec d}, \hat\cO^{\vir}\right) \vec\fz^{\vec d}
\end{equation}
of the quasimap vertex is well-defined via localization.

\subsubsection{}
\label{sec:quasimaps-fixed-loci}

As with fixed points on the Nakajima quiver variety
$\cM_{\vec\theta}$, we conflate $\sT$-fixed quasimaps with their
quiver data. Fix a point $V \in \cM_{\vec\theta}^{\sT'}$, and let $f
\in \QMaps_V^{\vec d}$ be a $\sT$-fixed quasimap defined by its quiver
data $\cV$.

\begin{lemma}[{\cite[Section 7.1]{Okounkov2017}}]
  The map $f$ is constant, and $\cV$ consists of line bundles
  $\cO_{\bP^1}(d)$ linearized with weights $z^{-d}, 1$ at $0, \infty
  \in \bP^1$ respectively.
\end{lemma}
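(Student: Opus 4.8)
The plan is to treat the two factors of $\sT = \sT' \times \bC^\times_z$ separately, exploiting that $\sT'$ acts only on the target $\cM_{\vec\theta}$ while $\bC^\times_z$ acts only on the domain $\bP^1$.

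First I would prove that $f$ is constant using the $\sT'$-factor. Since $\sT'$ acts trivially on the domain, restricting the $\sT$-fixedness to elements $t' \in \sT'$ (with $t_z = 1$) affects only the target: the condition $t'_* f \cong f$ descends to the GIT quotient, where the $G_V$-gauge realizing the isomorphism is already quotiented out, so the associated rational map $\bar f$ to $\cM_{\vec\theta}$, defined away from the finitely many singular points, satisfies $t' \cdot \bar f = \bar f$ as maps for every $t'$. Hence $\bar f$ takes values in $\cM_{\vec\theta}^{\sT'}$. Because this fixed locus is finite (in our setting the fixed points are the tuples $\vec\lambda$ of partitions from section~\ref{sec:hilbert-schemes-equivariant-geometry}) and the nonsingular locus of $\bP^1$ is connected, $\bar f$ is constant, necessarily equal to $\ev_\infty(f) = V$.

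Next I would analyze $\cV$ using the $\bC^\times_z$-factor. Being $\sT$-fixed equips the quiver bundle $\cV = \bigoplus_i \cV_i$ with a $\sT$-equivariant structure in which $\sT$ acts on the base $\bP^1$ through its quotient $\bC^\times_z$, fixing $0$ and $\infty$. By the equivariant Birkhoff--Grothendieck theorem, any such bundle splits $\sT$-equivariantly into a direct sum of equivariant line bundles $\cO_{\bP^1}(d)$. It then remains to pin down the $\bC^\times_z$-linearization of each summand, which is determined by the degree $d$ together with the weight at a single fixed point via the relation $\cO_{\bP^1}(d)|_\infty = w^d\,\cO_{\bP^1}(d)|_0$ recorded in section~\ref{sec:bs-vertex-boxes}, with $w = z$ the weight of the domain coordinate at $0$. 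Normalizing the $\bC^\times_z$-weight at $\infty$ to be trivial---forced by the requirement that $\ev_\infty(f) = V$ be a genuine point of $\cM_{\vec\theta}$ carrying precisely the $\sT'$-weights of $V$---then yields weights $z^{-d}$ at $0$ and $1$ at $\infty$, as claimed.

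The main obstacle I anticipate is the stacky bookkeeping: since $\sT$-fixedness holds only up to a $G_V$-gauge isomorphism, one must argue that this freedom obstructs neither the pointwise $\sT'$-invariance of $\bar f$ nor the existence of a genuinely $\sT$-equivariant splitting of $\cV$. The cleanest route is to phrase everything in terms of the $\sT$-equivariant structure on the universal quiver bundle over the fixed locus, rather than pointwise on $\bP^1$. A secondary point is checking that the quiver maps $B, \bar B, i, j$ are compatible with the equivariant splitting, but this is automatic once the fiber weights at $\infty$ are matched to the $\sT'$-weights of $V$.
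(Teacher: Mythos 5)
The paper does not actually prove this lemma---it is quoted verbatim from \cite[Section 7.1]{Okounkov2017}---so your proposal can only be compared with the standard argument. Your first two steps are sound: restricting $\sT$-fixedness to $\sT' \times \{1\}$ forces the induced map to $\cM_{\vec\theta}$, defined on the connected cofinite nonsingular locus, to take values in the finite fixed locus $\cM_{\vec\theta}^{\sT'}$, hence to be constant equal to $\ev_\infty(f)$; and once a genuine $\sT$-equivariant structure on $\cV$ is available, the equivariant Birkhoff--Grothendieck splitting together with the relation $\cO_{\bP^1}(d)\big|_\infty = z^d\, \cO_{\bP^1}(d)\big|_0$ reduces the whole statement to computing the fiber weights at a single fixed point of $\bP^1$.

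The gap is in that last computation, which is the real content of the lemma. You treat the triviality of the $\bC^\times_z$-weights at $\infty$ as a ``normalization'' that is simultaneously ``forced'' by $\ev_\infty(f) = V$; but these two framings are incompatible, and neither is justified as written. There is in fact nothing to normalize: stable quasimaps to Nakajima quiver varieties have no automorphisms (a gauge transformation fixing the section is the identity on the dense open locus where the section is stable, hence everywhere), so the isomorphisms $\phi_t\colon t^*f \cong f$ are unique, the equivariant structure on $\cV$ is canonical, and the weight at $\infty$ is a fact to be proven. The proof uses stability at $\infty$, which your argument never invokes: since $f$ is nonsingular there, the fiber $(\cV\big|_\infty, B\big|_\infty, i\big|_\infty, j\big|_\infty)$ is a stable framed representation, and stable points of $\mu^{-1}(0)$ have trivial $G_V$-stabilizers. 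Under the canonical equivariant structure the framing $i\colon \cW \to \cV$ is invariant with $\cW$ trivially linearized, and the quiver maps carry only $\sT'$-weights; hence for $t \in \bC^\times_z$ the structure map $\phi_t\big|_\infty$ is an automorphism of this stable framed representation and therefore the identity, i.e. every weight at $\infty$ is trivial. (Equivalently: $\im(i\big|_\infty)$ has trivial $z$-weight and generates $\cV\big|_\infty$ under the $z$-invariant quiver maps.) Relatedly, your closing remark that compatibility of $B, \bar B, i, j$ with the splitting ``is automatic once the fiber weights at $\infty$ are matched'' has the logic backwards: the compatibility is built into the canonical equivariant structure, and it is this compatibility plus stability that forces the weights, not the matching of weights that yields compatibility.
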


\begin{definition}
  For a square $\vsquare \in V$, let $d_{\vsquare}$ be the degree of
  its corresponding line bundle
  \[ \cO_{\bP^1}(d_{\vsquare}) \subset \cV. \]
  If $\vsquare$ has multiplicity, e.g. for $\Hilb(\cA_{m-1})$, then we
  write $d_{\vsquare}^{(a)}$ for the degree coming from $\vsquare \in
  V^{(a)}(\vec \lambda)$.
\end{definition}

To represent $\cV$ diagrammatically, we write (lists of) degrees in
each box of the diagram for $V$, as in
Figure~\ref{fig:quasimaps-nontrivial-fixed-loci}. When there are
multiple degrees in a box, we disregard the ordering of the list of
degrees.

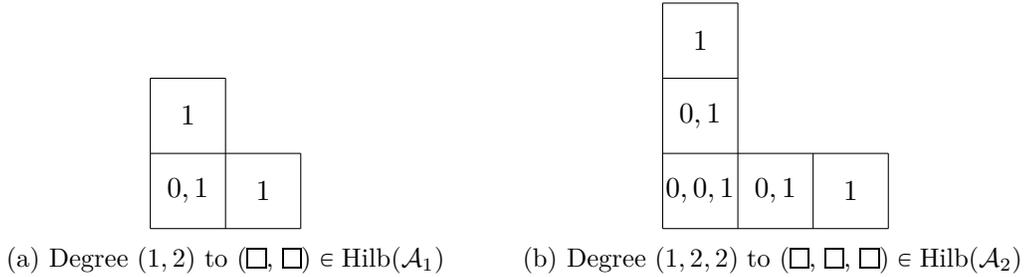
\begin{figure}[h]
  \centering
  \begin{subfigure}[b]{0.45\textwidth}
    \centering
    \begin{tikzpicture}
      \draw (0,0) -- (0,2);
      \draw (1,0) -- (1,2);
      \draw (2,0) -- (2,1);
      \draw (0,0) -- (2,0);
      \draw (0,1) -- (2,1);
      \draw (0,2) -- (1,2);
      \node at (0.5,0.5) {$0,1$};
      \node at (1.5,0.5) {$1$};
      \node at (0.5,1.5) {$1$};
    \end{tikzpicture}
    \caption{Degree $(1,2)$ to ({\tiny\protect\yng(1)}, {\tiny\protect\yng(1)}) $\in \Hilb(\cA_1)$}
    \label{fig:quasimaps-nontrivial-fixed-loci-1}
  \end{subfigure}%
  ~
  \begin{subfigure}[b]{0.45\textwidth}
    \centering
    \begin{tikzpicture}
      \draw (0,0) -- (0,3);
      \draw (1,0) -- (1,3);
      \draw (2,0) -- (2,1);
      \draw (3,0) -- (3,1);
      \draw (0,0) -- (3,0);
      \draw (0,1) -- (3,1);
      \draw (0,2) -- (1,2);
      \draw (0,3) -- (1,3);
      \node at (0.5,0.5) {$0,0,1$};
      \node at (1.5,0.5) {$0,1$};
      \node at (2.5,0.5) {$1$};
      \node at (0.5,1.5) {$0,1$};
      \node at (0.5,2.5) {$1$};
    \end{tikzpicture}
    \caption{Degree $(1,2,2)$ to ({\tiny\protect\yng(1)}, {\tiny\protect\yng(1)}, {\tiny\protect\yng(1)}) $\in \Hilb(\cA_2)$}
    \label{fig:quasimaps-nontrivial-fixed-loci-2}
  \end{subfigure}
  \caption{Some $\sT$-fixed components of quasimap moduli space}
  \label{fig:quasimaps-nontrivial-fixed-loci}
\end{figure}

\subsubsection{}

There is a universal formula for the {\it virtual} tangent space of
$\QMaps$ at $\cV$. It is completely analogous to
\eqref{eq:tangent-space-quiver-variety}, but requires an extra
pushforward over the domain $\bP^1$:
\begin{equation} \label{eq:quasimaps-Tvir}
  T^{\vir}_{\cV}\QMaps = H^\bullet\left(T^*\left(\cV_0 + \frac{1}{x}\sum_{i=0}^{m-1} \Hom(\cV_i, \cV_{i+1})\right) - \left(1 + \hbar\right) \sum_{i=0}^{m-1} \Hom(\cV_i, \cV_i)\right).
\end{equation}

Usually for the quasimap vertex, one normalizes by subtracting the
contribution of $T_V\cM_{\vec\theta}$ from $T^{\vir}_{\cV}$. This is
equivalent to replacing $H^\bullet$ in \eqref{eq:quasimaps-Tvir} with
\begin{equation} \label{eq:quasimap-Tvir-normalization}
  H^\bullet_\sim(\cO(n)) \coloneqq H^\bullet(\cO(n)) - 1
\end{equation}
for non-equivariant line bundles $\cO(n)$ on $\bP^1$. The contribution
$T_V\cM_{\vec\theta}$ is the freedom of the point $V$ to move around
in $\cM_\theta$, and removing it is analogous to the redistribution
for the BS vertex discussed in \ref{sec:bs-vertex-Tvir}. As we do not
perform the redistribution for the BS vertex, we also do not normalize
the quasimap vertex.

\subsubsection{}

From here on, we focus on the case of quasimaps to $\Hilb(\cA_{m-1})$.
While the moduli of quasimaps to $\Hilb([\bC^2/\Gamma])$ has isolated
$\sT$-fixed points, the moduli of quasimaps to $\Hilb(\cA_{m-1})$ can
have arbitrarily high-dimensional $\sT$-fixed loci. Using the formula
\eqref{eq:bs-Tvir} for $T^{\vir}$, one can read off a combinatorial
formula for the virtual dimension of $\sT$-fixed loci.

\begin{lemma} \label{lem:quasimap-Tvir-fixed-term}
  The $\sT$-fixed term in $T^{\vir}_{\cV}$ is:
  \begin{enumerate}
  \item the total number of pairs $d_\vsquare^{(a)} >
    d_{\vsquare'}^{(b)}$ where $\vsquare'$ is either $\vsquare$ or $xy
    \cdot \vsquare$, minus
  \item the total number of pairs $d_\vsquare^{(a)} >
    d_{\vsquare'}^{(b)}$ where $\vsquare'$ is either $x \cdot
    \vsquare$ or $y \cdot \vsquare$, minus
  \item the total number of negative degrees at $\vsquare = (0,0)$.
  \end{enumerate}
\end{lemma}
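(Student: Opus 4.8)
The plan is to evaluate \eqref{eq:quasimaps-Tvir} at the fixed quasimap $\cV$ and extract its $\sT$-invariant part directly. Writing $\Theta$ for the virtual bundle inside $H^\bullet$ and expanding $T^*X = X + \hbar X^\vee$ (using $T^*\cV_0 = \cV_0 + \hbar\cV_0^\vee$ and the fact that dualizing $\tfrac1x\cHom(\cV_i,\cV_{i+1})$ gives $x\cHom(\cV_{i+1},\cV_i)$), one obtains
\[ \Theta = \cV_0 + \hbar \cV_0^\vee + \frac{1}{x}\sum_i \cHom(\cV_i,\cV_{i+1}) + \hbar x \sum_i \cHom(\cV_{i+1},\cV_i) - (1+\hbar)\sum_i \cHom(\cV_i,\cV_i). \]
By the fixed-point description, $\cV_i = \sum_\vsquare w_\vsquare \cO_{\bP^1}(d_\vsquare)$ where the sum runs over color-$i$ boxes (counted with their multiplicity slots) and $w_\vsquare$ is the $\sT'$-weight of $\vsquare$. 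Since $\sT = \sT' \times \bC_z^\times$ and $H^\bullet$ only records the $z$-grading coming from $\bP^1$, the $\sT$-fixed part of $T^{\vir}_\cV = H^\bullet(\Theta)$ is the $z$-fixed part of $H^\bullet$ applied to the sub-bundle of $\Theta$ of trivial $\sT'$-weight.

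The first key computation is the $z$-fixed part of $H^\bullet(\cO_{\bP^1}(d))$ for the linearization $\cO(d)|_0 = z^{-d}$, $\cO(d)|_\infty = 1$. By K-theoretic localization on $\bP^1$ (with the paper's convention, so that the coordinate at $0$ has weight $z$),
\[ \chi(\bP^1, \cO_{\bP^1}(d)) = \frac{z^{-d}}{1-z} + \frac{1}{1-z^{-1}}, \]
whose weight-zero coefficient is $1$ for $d \ge 0$ and $0$ for $d < 0$. The second step is to match the $\sT'$-trivial terms of $\Theta$ with box pairs. Each $\cHom$ becomes a double sum over boxes with weight $w_{\vsquare'}/w_\vsquare$, and the prefactors force the surviving (trivial-weight) pairs into four relative positions: $\tfrac1x\cHom(\cV_i,\cV_{i+1})$ gives $\vsquare' = x\vsquare$; $\hbar x\,\cHom(\cV_{i+1},\cV_i)$ gives $\vsquare' = y\vsquare$ (since $\hbar x = y^{-1}$); $-\cHom(\cV_i,\cV_i)$ gives $\vsquare' = \vsquare$, i.e. the same geometric box, including distinct multiplicity slots; and $-\hbar\,\cHom(\cV_i,\cV_i)$ gives $\vsquare' = xy\,\vsquare$. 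The term $\cV_0$ contributes single boxes at the origin, and $\hbar\cV_0^\vee$ would contribute only boxes at $(-1,-1)$, which do not occur in these diagrams. Each surviving pair contributes $\pm\cO_{\bP^1}(d_{\vsquare'}-d_\vsquare)$ with the sign of its group.

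Applying the first computation termwise then gives the answer, after one bookkeeping observation. Rather than the honest weight-zero count (which is $1$ for $d\ge 0$), I use the function equal to $-1$ for $d<0$ and $0$ otherwise; these differ by the constant $1$, and the total signed number of trivial-weight terms of $\Theta$ equals the $\sT'$-weight-zero virtual rank of $\Theta$, which vanishes because fiberwise $\Theta$ is exactly $T_V\Hilb(\cA_{m-1})$ of \eqref{eq:tangent-space-quiver-variety} and its trivial-weight part is the tangent space to the (isolated) $\sT'$-fixed points, hence $0$. With this normalization, each pair records a strict degree inequality: the same-position and $xy$-shifted pairs (from the two negative groups) contribute positively and assemble into count (1); the $x$- and $y$-shifted pairs contribute negatively and assemble into count (2); and the origin singletons contribute negatively and give count (3), yielding $(1)-(2)-(3)$.

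The main obstacle is not any single computation but the sign-and-constant bookkeeping: correctly tracking how the prefactors $\tfrac1x, \hbar x, \hbar$ and the overall signs in $\Theta$ convert each trivial-weight term into the correct strict inequality, handling the multiplicity slots in the same-position group, and in particular recognizing that the naive weight-zero count differs from the stated formula by a constant equal to the $\sT'$-fixed rank of $\Theta$, which must then be identified with $\dim \Hilb(\cA_{m-1})^{\sT'} = 0$. Verifying that no boxes of $V(\vec\lambda)$ lie at position $(-1,-1)$ (so that $\hbar\cV_0^\vee$ genuinely contributes nothing) is a small additional check that follows from the explicit hook weights in \eqref{eq:orbifold-box-diagram-fixed-points}.
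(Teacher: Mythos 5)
Your proof is correct and follows essentially the same route as the paper: your ``bookkeeping observation'' --- trading the weight-zero count $1$ for $d \ge 0$ for the count $-1$ for $d<0$ at the cost of the total $\sT'$-trivial virtual rank of $\Theta$, which vanishes because $V$ is an isolated $\sT'$-fixed point --- is precisely the paper's normalization $H^\bullet_\sim(\cO(n)) = H^\bullet(\cO(n)) - 1$, whose discrepancy $T_V\cM_{\vec\theta}$ has no $\sT$-fixed weight, after which both arguments count negative-degree line bundles with opposite sign in the same five groups of trivial-weight terms. The only difference is one of explicitness: you spell out the localization computation on $\bP^1$ and the check that no box of $V(\vec\lambda)$ sits at $(-1,-1)$, both of which the paper leaves implicit.
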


\begin{proof}
  The $\sT$-fixed term is unchanged if we normalize
  $T^{\vir}_{\cV}\QMaps$ by using $H^\bullet_\sim$ as defined in
  \eqref{eq:quasimap-Tvir-normalization} instead of the usual
  $H^\bullet$, because the discrepancy $T_V\cM_{\vec\theta}$ has no
  $\sT$-fixed weight. Using the formula \eqref{eq:quasimaps-Tvir} for
  $T^{\vir}_{\cV}$, the $\sT$-fixed term is the total number of line
  bundles $\cO_{\bP^1}(n)$ with $n < 0$ in the expression
  \[ \cV_0 + \frac{\cV_0^\vee}{xy} + \frac{1}{x} \sum_a \cV_a^\vee \cV_{a+1} + \frac{1}{y} \sum_a \cV_a \cV_{a+1}^\vee - \left(1 + \frac{1}{xy}\right) \sum_a \cV_a^\vee \cV_a, \]
  counted with {\it opposite} sign. Namely, a line bundle $\pm
  \cO_{\bP^1}(n)$ contributes $\mp 1$ to the total.
\end{proof}

For example, the fixed component of
Figure~\ref{fig:quasimaps-nontrivial-fixed-loci-2} has virtual
dimension $4 - 2 = 2$ in $\QMaps(\Hilb(\cA_2))$. Conjecturally, the
fixed loci have sufficiently nice geometry for their virtual
dimensions to be their actual dimensions.

\begin{remark}
  Since $\pi$-stable pairs have no non-trivial automorphisms
  \cite[Lemma 23]{Bryan2016}, when the virtual dimension is negative
  there must be non-trivial obstructions and therefore the degrees do
  not define a valid quasimap fixed component. Hence
  Lemma~\ref{lem:quasimap-Tvir-fixed-term} yields a combinatorial
  criterion for how to label squares in $V(\vec\lambda)$ to produce a
  valid quasimap.
\end{remark}

\subsubsection{}

To understand the geometry of $\sT$-fixed loci in
$\QMaps(\Hilb(\cA_{m-1}))$, a different perspective on $\sT$-fixed
quasimaps is necessary.

\begin{definition}
  Let $\cV \in \QMaps(\Hilb(\cA_{m-1}))^{\sT}$. Associated to $\cV =
  \bigoplus_k \cV_k$ are the vector spaces
  \[ \bV_k \coloneqq \bigoplus_{n \in \bZ} \bV_k[n] \coloneqq H^0\left(\cV_k\big|_{\bP^1 \setminus \{\infty\}}\right), \]
  where $\bV_k[n]$ is the $\bC_z^\times$ weight space of weight $n$.
  Multiplication by $z$ induces embeddings
  \begin{equation} \label{eq:quasimaps-as-flags}
    \bV_k[n] \hookrightarrow \bV_k[n+1] \hookrightarrow \cdots \hookrightarrow \bV_k[\infty] = V_k
\end{equation}
  compatible with quiver maps. Let $\bV^\bullet = \bigoplus_k
  \bV_k^\bullet$ denote the resulting flag of quiver representations.
\end{definition}

A $\sT$-fixed quasimap to $V$ is therefore the data of an infinite
flag $\bV^\bullet$ of (stable) quiver sub-representations of $V$,
starting with the zero quiver representation at $n = -\infty$ and
ending with $V$ at $n = \infty$, along with the data of the framing
morphism $\cW \hookrightarrow \cV$. Note that since $\cW =
\cO_{\bP^1}$, its associated flag is
\[ \bW_k[n] = \begin{cases} W_k & n \ge 0 \\ 0 & n < 0 \end{cases} \]
and there is a map $\bW^\bullet \hookrightarrow \bV^\bullet$.
Rephrasing, the only obstruction to a flag $\bV^\bullet$ of quiver
sub-representations of $V$ being a valid quasimap is that it admits a
framing $\bW^\bullet \hookrightarrow \bV^\bullet$.

\begin{proposition} \label{prop:quasimap-as-framed-quiver-bundle}
  A quiver bundle $\cV$ defined by a flag $\bV^\bullet$ of quiver
  representations of $V$ is a stable quasimap iff it admits a framing
  $\bW^\bullet \hookrightarrow \bV^\bullet$.
\end{proposition}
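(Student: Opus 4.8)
The plan is to read the framing off the quasimap and to match its stability with $\theta$-stability of the fiber at $\infty$. A quasimap to $\cM_{\vec\theta}$ is a framed quiver bundle $(\cV, \cW, i)$ with $\mu(i) = 0$, and by the preceding lemma a $\sT$-fixed one has $\cW = \cO_{\bP^1}$ with $\cV$ a sum of equivariant line bundles. First I would take $\bC^\times_z$-weight spaces of sections over $\bP^1 \setminus \{\infty\}$ to pass between $\cV$ and the flag $\bV^\bullet$ of \eqref{eq:quasimaps-as-flags}, and between the weight-$0$ framing $i$ and a morphism $\bW^\bullet \to \bV^\bullet$; compatibility with the flag maps (multiplication by $z$) and with the quiver maps is automatic from $\sT$-equivariance. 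Since any $\sT$-invariant framed quiver bundle has closed, $\bC^\times_z$-invariant singular locus, openness of the stable locus lets me reduce stability of the quasimap to $\theta$-stability of the fiber at the fixed point $\infty$: if that fiber is stable, the stable locus contains a neighborhood of $\infty$ and hence, by $\bC^\times_z$-equivariance, all of $\bP^1 \setminus \{0\}$, leaving at most one singular point.

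For the forward implication I would argue that a stable quasimap carries such a framing $i$, producing the morphism $\bW^\bullet \to \bV^\bullet$ above. Stability at $\infty$ forbids the destabilizing subobject $(0, W)$ through \eqref{eq:king-stability-criterion}, so $i|_\infty = i_V(1) \neq 0$; as multiplication by $z$ is injective on sections, each induced map $\bW_0[n] \to \bV_0[n]$ for $n \ge 0$ is nonzero, and the morphism is an injection $\bW^\bullet \hookrightarrow \bV^\bullet$.

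For the converse I would start from a morphism $\bW^\bullet \hookrightarrow \bV^\bullet$, which selects $w \in \bV_0[0]$ whose stabilized image in $V_0 = \bV_0[\infty]$ is the framing vector $i_V(1)$ of the fixed point $V$. Viewed as a weight-$0$ section of $\cV_0$ over $\bP^1 \setminus \{\infty\}$, this $w$ extends across $\infty$ precisely because it stabilizes to $i_V(1)$, yielding a framing $i\colon \cW \hookrightarrow \cV$ and hence a framed quiver bundle. Its fiber at $\infty$ is $(V, i_V(1))$, which is $\theta$-stable because $V \in \cM_{\vec\theta}^{\sT'}$; by the reduction of the first paragraph the resulting quasimap is then stable, with singular locus contained in $\{0\}$.

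The delicate point, and the step I expect to be the main obstacle, is to match—via the King–Crawley-Boevey criterion \eqref{eq:king-stability-criterion} for the chamber $C_-(m)$, and in both directions—the mere \emph{existence} of an injection $\bW^\bullet \hookrightarrow \bV^\bullet$ with $\theta$-stability of the fiber at $\infty$. I expect this to require the single-vertex generation analysis already used in computing the universal family \eqref{eq:universal-family-Am}: one must verify that having $i_V(1)$ present at weight $0$ in the flag is \emph{exactly} the condition that the framing satisfies the stability inequality against every subobject, not merely against $(0,W)$, so that no weaker or stronger condition slips in. Securing this identification is what makes the flag-theoretic and GIT descriptions literally coincide, and hence proves the proposition.
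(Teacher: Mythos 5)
Your proposal is correct and follows essentially the same route as the paper: the paper states this proposition as a direct rephrasing of the structure theory of $\sT$-fixed quasimaps (constant maps, $\cV$ a sum of linearized line bundles, weight spaces of sections over $\bP^1 \setminus \{\infty\}$ giving the flag, the framing morphism being the only remaining data), which is exactly the translation carried out in your first three paragraphs.

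One clarification: the ``delicate point'' you flag in your final paragraph is not actually there, and your proof is already complete at the end of the third paragraph. The fiber at $\infty$ of the framed bundle built from the flag is the fixed point $(V, i_V(1))$ itself, whose $\theta$-stability is a \emph{hypothesis} ($V$ is a point of $\cM_{\vec\theta}$), not something to be re-derived from, or matched against, the existence of the injection $\bW^\bullet \hookrightarrow \bV^\bullet$. The only stability inputs the argument needs are the two you already used: $(V, i_V(1))$ is stable (given), and a framed representation with vanishing framing vector is destabilized by the subobject $(0, W_0)$ via \eqref{eq:king-stability-criterion}. No further verification of King's inequality against all subobjects is required: once the framing section exists (equivalently, $i_V(1) \in \bV_0[0]$), it is automatically nonvanishing at $\infty$ (a $z$-weight-$0$ section of $\cO_{\bP^1}(d)$ with $d \ge 0$ cannot vanish there), so the $\infty$-fiber is $(V, i_V(1))$, and your openness-plus-equivariance reduction finishes the argument. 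The single-vertex generation analysis from the computation of the universal family \eqref{eq:universal-family-Am} plays no role here.
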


\subsubsection{}

Using $\bV^\bullet$, we now explain the ambiguity in defining a
single quasimap from diagrams like in
Figure~\ref{fig:quasimaps-nontrivial-fixed-loci}. The problem is that,
even after specifying the dimension vector for a quiver
sub-representation, there is still non-trivial moduli for the quiver
sub-representation whenever a square has multiplicity. This occurs in
{\it negative} $z$-degree, where, crucially, the sub-representations
are {\it not} framed. Note that this will only occur for
$\QMaps(\Hilb(\cA_{m-1}))$; the $\sT$-fixed loci in
$\QMaps(\Hilb([\bC^2/\Gamma]))$ are isolated points.

\begin{example} \label{ex:quasimaps-nontrivial-fixed-loci}
  Consider the $\sT$-fixed component in
  Figure~\ref{fig:quasimaps-nontrivial-fixed-loci-1}. Disregarding
  $z$-weights, it corresponds to the flag
  \[ \begin{tikzcd}[ampersand replacement=\&]
    \bC x \oplus \bC y \ar[shift right=2]{d} \ar[shift left=1]{d} \ar[hookrightarrow]{r} \& \bC x \oplus \bC y \ar[shift right=2]{d} \ar[shift left=1]{d} \ar[hookrightarrow]{r} \& \cdots \\
    \bC \ar[shift right=2]{u} \ar[shift left=1]{u} \ar[hookrightarrow]{r}{\iota = \begin{psmallmatrix} a \\ b \end{psmallmatrix}} \& \bC^2 \ar[shift right=2]{u} \ar[shift left=1]{u} \ar[hookrightarrow]{r} \& \cdots \\
    {} \& \bC \ar[hookrightarrow]{r} \ar[swap]{u}{\begin{psmallmatrix} 1 \\ 1 \end{psmallmatrix}} \& \cdots
  \end{tikzcd}. \]
  There is a $\bP^1$ worth of freedom for the inclusion $\iota\colon
  \bC \to \bC^2$, and then everything else is uniquely specified.
  Hence this $\sT$-fixed component is a $\bP^1$. 
\end{example}

This example is the quasimap analogue of
Example~\ref{ex:bs-nontrivial-fixed-loci}. Note that if the $\bV[-1]$
slice were also framed, the framing would force $\iota
= \begin{psmallmatrix} 1 \\ 1 \end{psmallmatrix}$, removing the degree
of freedom and turning the fixed $\bP^1$ into a fixed point.

\section{BS/quasimaps correspondence}
\label{sec:bs-quasimaps}

\subsection{The correspondence}

\subsubsection{}

The main result of this paper is that, after a suitable change of
variables, the series in the BS $1$-leg vertex and the quasimap vertex
for the $\cA_{m-1}$ geometry are equal on the nose.

\begin{theorem}[BS/quasimaps correspondence] \label{thm:bs-quasimap-correspondence}
  For $S = \cA_{m-1}$,
  \[ \sV_{\PiPairs(S \times \bP^1)}(Q, \vec A) = \sV_{\QMaps(\Hilb(S))}(\vec\fz) \]
  after the change of variables
  \begin{align*}
    Q &\leftrightarrow \fz_0\fz_1 \cdots \fz_{m-1} \\
    A_i &\leftrightarrow \fz_i \qquad \forall 1 \le i < m.
  \end{align*}
\end{theorem}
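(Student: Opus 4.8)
The plan is to prove the two vertices \eqref{eq:bs-vertex-definition} and \eqref{eq:quasimap-vertex-definition} equal componentwise by equivariant localization. Both are built by applying $\ev_{\infty,*}$ to $\hat\cO^{\vir}$, and both evaluation maps target $\Hilb(S)$ (via $\Hilb(D,\text{points}) = \Hilb(S)$ on the BS side and $\cM_{\vec\theta} = \Hilb(S)$ on the quasimap side). Localizing over $\Hilb(S)^{\sT'}$, whose points are the $m$-tuples of partitions $\vec\lambda$ indexing both theories, it suffices to fix one such point $p$ and produce an isomorphism of $\sT$-fixed loci
\[ \bigsqcup_{n,\beta}\bigl(\PiPairs_p^{n,\beta}\bigr)^{\sT} \;\cong\; \bigsqcup_{\vec d}\bigl(\QMaps_p^{\vec d}\bigr)^{\sT} \]
that is compatible with $\ev_\infty$, identifies the virtual tangent spaces, and matches the gradings under the stated change of variables. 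Granting this, the two $p$-th component vertices $\sV^p_{\PiPairs}$ and $\sV^p_{\QMaps}$ coincide, and summing over $p$ gives the theorem.

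The isomorphism should come from the $\sT$-equivariant derived McKay equivalence (section~\ref{sec:bs-quasimaps-Tvir-correspondence}), applied relative to the $\bP^1$ factor of $Y = S \times \bP^1$. The natural candidate functor sends a $\pi$-stable pair $[\cO_Y \to \cF]$ to the quiver bundle $\cV = \bigoplus_k \cV_k$ with $\cV_k \coloneqq Rp_{\bP^1,*}(\cF \otimes \rV_k)$, the relative McKay transform along $p_{\bP^1}\colon Y \to \bP^1$ of $\cF$ against the universal family $\rV = \bigoplus_k \rV_k$; this is precisely the graph construction anticipated in \eqref{eq:motivation-ADE-fibration-over-P1}. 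Since McKay is an equivalence of categories, the assignment is functorial and invertible. It is compatible with $\ev_\infty$ because restricting $\cF$ to the fiber $D = S \times \{\infty\}$ and transforming recovers exactly the quiver data \eqref{eq:regular-quiver-data} of the point $\cF|_\infty \in \Hilb(S)$, so $\cF|_\infty$ and $f(\infty)$ agree. Concretely, I would match the two combinatorial pictures box-by-box: the local models and their filtrations (Propositions~\ref{prop:bs-local-model} and \ref{prop:bs-local-model-filtration}) against the flags $\bV^\bullet$ of quiver sub-representations (Proposition~\ref{prop:quasimap-as-framed-quiver-bundle}), so that in particular the positive-dimensional fixed loci of Examples~\ref{ex:bs-nontrivial-fixed-loci} and \ref{ex:quasimaps-nontrivial-fixed-loci} are carried onto one another.

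Two things then remain. First, the virtual tangent spaces must agree. Because the correspondence is induced by a derived equivalence it preserves $\Ext$-groups, so the universal formula \eqref{eq:bs-Tvir}, $T^{\vir} = \chi(\cO_Y) - \chi(I^\bullet,I^\bullet)$, is transported to the quiver-theoretic expression \eqref{eq:quasimaps-Tvir}; I would confirm this by decomposing $\cF$ along the $\rV_k$ and combining the quiver-variety tangent formula \eqref{eq:tangent-space-quiver-variety} with the extra $H^\bullet(\bP^1,-)$ that distinguishes \eqref{eq:quasimaps-Tvir}. Matching $T^{\vir}$ as $\sT$-equivariant K-theory classes (with a compatible choice of polarization $\cK_{\vir}^{1/2}$) then identifies the symmetrized $\hat\cO^{\vir}$ on the two sides. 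Second---and this is the main obstacle---the two stability conditions must correspond: a box configuration is a $\pi$-stable pair (i.e.\ $\coker(s) \in \cat{T}$ and $\cF \in \cat{T}^\perp$) if and only if the associated flag $\bV^\bullet$ admits a framing $\bW^\bullet \hookrightarrow \bV^\bullet$ (Proposition~\ref{prop:quasimap-as-framed-quiver-bundle}). This is delicate because $\pi$-stability is phrased through the resolution $\pi$ and the torsion pair $(\cat{T}, \cat{T}^\perp)$, whereas quasimap stability is phrased through framings in negative $z$-weight; the content of section~\ref{sec:stability-conditions} should be that the derived McKay equivalence carries $\cat{T}$ and $\cat{T}^\perp$ to exactly the (un)framed conditions on $\bV^\bullet$, with the rod lemmas (Lemmas~\ref{lem:bs-rod-degree-lower-bound} and \ref{lem:bs-rod-degree-upper-bound}) doing the essential combinatorial bookkeeping.

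Finally, the change of variables encodes the induced bijection of grading lattices. Writing $\vec d = (d_0,\ldots,d_{m-1})$ for the quasimap degrees, the substitution $\fz_i = A_i$ for $1 \le i < m$ and $\fz_0 = Q(A_1\cdots A_{m-1})^{-1}$ turns $\vec\fz^{\vec d}$ into $Q^{d_0}\prod_{i=1}^{m-1} A_i^{d_i - d_0}$, so it records the relations $n = d_0$ and $\beta_i = d_i - d_0$ between the renormalized Euler characteristic and exceptional degree $(n,\beta)$ and the bundle degrees $\deg\cV_k = d_k$. I would verify these relations directly by computing $\chi_{\text{normalized}}(\cF)$ and $\deg(\cF)$ in terms of the $\cV_k$ using the universal-family weights \eqref{eq:universal-family-Am} and the $m$-hook \eqref{eq:m-hook}. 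With fixed loci, virtual structure sheaves, evaluation maps, and gradings all matched, localization yields the claimed equality $\sV_{\PiPairs} = \sV_{\QMaps}$.
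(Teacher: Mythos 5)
Your proposal is correct and follows essentially the same route as the paper: equivariant localization over $\Hilb(S)^{\sT'}$, an isomorphism of $\sT$-fixed loci induced by the $\sT$-equivariant derived McKay equivalence taken relative to the $\bP^1$ factor, matching of $T^{\vir}$ via preservation of $\Ext$-groups together with the quiver tangent formula, matching of stability conditions by identifying $\pi$-stability (via the rod lemmas and the inclusion $\cO_C \hookrightarrow \cF$) with the existence of a framing $\bW^\bullet \hookrightarrow \bV^\bullet$, and the same degree bookkeeping $n = d_0$, $\beta_i = d_i - d_0$ for the change of variables. The only cosmetic difference is that you phrase the functor in the $\Psi$ direction (pairs to quiver bundles) while the paper's key computations run $\Phi$ in the opposite direction, but these are inverse equivalences and the argument is the same.
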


As with many results about $\cA_{m-1}$, this theorem continues to hold
when $m = 1$. In this case, the geometry is $\cA_0 \coloneqq \bC^2$,
viewed as a trivial resolution of $\bC^2$ with no singularities. Then
BS theory on $\cA_0$ is equivalent to PT theory, in which case this
theorem shows that
\[ \sV_{\Pairs(\bC^2)}(Q) = \sV_{\QMaps(\Hilb(\bC^2))}(\fz) \]
where $Q = \fz$. In fact, in this case, this arises from a known
isomorphism $\Pairs(\bC^2) \simeq \QMaps(\Hilb(\bC^2))$ of moduli
spaces \cite[Exercise 4.3.2]{Okounkov2017}.

\subsubsection{}
\label{sec:bs-quasimaps-bijection}

The proof of the correspondence goes via $\sT$-equivariant
localization: it suffices to give an isomorphism of $\sT$-fixed loci
for $\PiPairs$ and $\QMaps$ and show that this isomorphism respects
virtual tangent spaces. We first state this isomorphism as a bijection
between {\it components} of $\sT$-fixed loci, and then clarify why it
is an isomorphism. Fix $\vec\lambda \in \Hilb(\cA_{m-1})^{\sT}$.
\begin{itemize}
\item Recall from section~\ref{sec:bs-local-model-filtration} that
  $\sT$-fixed (components of) $\pi$-stable pairs to $\vec\lambda$ are
  specified by local models for each $\square \in \lambda_a$. In
  particular, the local model for $\square \in \lambda_a$ is defined
  by numbers
  \[ \vec e^\square \coloneqq (e_0^\square, \ldots, e_{m-1}^\square). \]
  Here $e_0^\square$ is the smallest integer $k \in \bZ$ such that the
  box $z^k \cdot \square$ is not involved in a rod, and otherwise
  $e_a^\square$ is the integer such that $e_a^\square - e_0^\square$
  is the number of standard rods in the local model with non-zero
  support over the exceptional component $E_a$.

\item Recall from section~\ref{sec:quasimaps-fixed-loci} if
  $V(\vec\lambda) = \sum_a V^{(a)}(\vec\lambda)$ is the quiver data
  describing $\vec\lambda$, then $\sT$-fixed (components of) quasimaps
  to $\vec\lambda$ are specified by labeling the squares in each
  $V^{(a)}(\vec\lambda)$ with degrees. More specifically, each
  $\square \in \lambda_a$ has an associated sequence of degrees
  \[ \vec d^\square \coloneqq (d_0^{\square}, \ldots, d_{m-1}^{\square}), \]
  namely where $d_a^\square$ is the label of the color-$a$ square in
  the hook with character $\hk_a x_a^{i(\square)} y_a^{j(\square)}
  \subset V^{(a)}(\vec\lambda)$.
\end{itemize}
The desired bijection is simply that
\begin{equation} \label{eq:bs-quasimaps-bijection}
  \vec e^\square = \vec d^\square \quad \forall \, \square \in \lambda_a, \; \forall 0 \le a < m.
\end{equation}
That this is indeed a bijection of fixed components is the content of
Proposition~\ref{prop:bs-quasimap-stability-correspondence}, obtained
by manually matching stability conditions in BS and quasimap theories.
One can easily check that this bijection yields the specified change
of variables $(Q, \vec A) \leftrightarrow \vec\fz$.

\subsubsection{}

The outline of the proof is as follows. In
section~\ref{sec:bs-quasimaps-Tvir-correspondence}, the bijection
$\vec d^{\square} \mapsto \vec e^{\square}$ is realized geometrically
as an equivalence of categories
\[ \tilde\Phi_{\sT}\colon D^b\cat{Coh}_{\sT}(\bC^2 \times \bP^1) \to D^b\cat{Coh}_{\sT/\Gamma}(\cA_{m-1} \times \bP^1), \]
constructed using a $\sT$-equivariant version of the derived McKay
equivalence. Consequently, the bijection of fixed components is
refined to an {\it isomorphism} of fixed components, discussed in
section~\ref{sec:bs-quasimap-correspondence-refinement}. Then, to
match vertices, it suffices to match virtual tangent spaces of fixed
components. Although not all $\{\vec d^\square\}_{\square \in
  \vec\lambda}$ form stable quasimaps, and not all $\{\vec
e^\square\}_{\square \in \vec\lambda}$ form valid $\cO_Y$-modules for
$\pi$-stable pairs, in fact
Proposition~\ref{prop:bs-quasimaps-Tvir-correspondence} shows that the
formulas \eqref{eq:bs-Tvir} and \eqref{eq:quasimaps-Tvir} for their
virtual tangent spaces agree for {\it any} $\vec d^\square = \vec
e^\square$ via $\tilde\Psi_{\sT}$. .

For the equality of BS and quasimap vertices, it suffices to show that
$\tilde\Phi_{\sT}$ sends stable $\sT$-fixed quasimaps to valid
$\sT$-fixed $\pi$-stable pairs. This is done in
section~\ref{sec:stability-conditions}. However, it is in fact true
that $\tilde\Phi_{\sT}$ sends {\it all} stable quasimaps to valid
$\pi$-stable pairs: Proposition~\ref{prop:bs-qmaps-moduli-isomorphism}
shows that it provides a $\sT$-equivariant isomorphism of moduli
spaces, not just of $\sT$-fixed loci.

\subsection{Equivariant derived McKay}
\label{sec:bs-quasimaps-Tvir-correspondence}

\subsubsection{}

When $Y \to \fY$ is a crepant resolution of a sufficiently nice
orbifold singularity, it is generally expected that $D^b\cat{Coh}(Y)$
and $D^b\cat{Coh}(\fY)$ are equivalent. Historically, this was first
proved for ADE surfaces $S$, which are crepant resolutions of the
surface singularities $[\bC^2/\Gamma]$. The equivalence of categories
is given by a Fourier--Mukai transform, whose kernel is the incidence
correspondence
\[ \begin{tikzcd}
  & \Sigma \subset S \times \bC^2 \ar[swap]{dr}{q} \ar{dl}{p} \\
  S && \bC^2
\end{tikzcd} \]
given by viewing points on $S$ as $\Gamma$-orbits of $m$ points in
$\bC^2$. Later this was generalized to crepant resolutions of
threefold singularities $[M/\Gamma]$ for affine $M$
\cite{Bridgeland2001}.

\begin{theorem}[{\cite[Theorem 1.1]{Bridgeland2001}}] \label{thm:bkr}
  The Fourier--Mukai transform
  \[ \Psi(-) \coloneqq Rq_*\left(\cO_\Sigma \otimes p^*(- \otimes R_0)\right)\colon D^b\cat{Coh}(S) \to D^b\cat{Coh}_\Gamma(\bC^2) \]
  is an equivalence of categories.
\end{theorem}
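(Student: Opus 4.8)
The plan is to treat $\Psi$ as a Fourier--Mukai transform with kernel $\cO_\Sigma$ (equipped with its natural $\Gamma$-equivariant structure coming from the action on $\bC^2$) and to prove it is an equivalence by the standard two-step strategy: first full faithfulness via a spanning class, then promotion to an equivalence using that both sides are Calabi--Yau. First I would record that $\Psi$ admits both adjoints. Since $\Sigma \subset S \times \bC^2$ is proper over each factor (the projection $p\colon \Sigma \to S$ is finite of degree $|\Gamma| = m$, and $q\colon \Sigma \to \bC^2$ is proper because $S \to \bC^2/\Gamma$ is projective), Grothendieck--Verdier duality produces left and right adjoints that are themselves integral transforms, with the dualized kernel twisted by the relevant (relative) canonical bundles.

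Next I would verify full faithfulness. The skyscraper sheaves $\{\cO_x\}_{x \in S}$ form a spanning class of $D^b\cat{Coh}(S)$, so by the Bridgeland--Maciocia (equivalently Bondal--Orlov) criterion it suffices to check, for all closed points $x, y \in S$ and all $i$, that
\[ \Hom^i(\Psi\cO_x, \Psi\cO_y) = \Hom^i(\cO_x, \cO_y). \]
Concretely this means (i) $\Hom^i(\Psi\cO_x, \Psi\cO_y) = 0$ for $x \neq y$; (ii) $\Hom^0(\Psi\cO_x, \Psi\cO_x) = \bC$; and (iii) $\Hom^i(\Psi\cO_x, \Psi\cO_x) = \wedge^i T_x S$ for $0 \le i \le 2$. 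The object $\Psi\cO_x$ is the $\Gamma$-cluster in $\bC^2$ parameterized by $x$, so (i)--(iii) become statements about supports and self-extensions of these clusters.

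The main obstacle is condition (i): controlling $\Hom^\bullet$ between clusters sitting over different points of $\bC^2/\Gamma$. This is exactly where the geometry of the crepant resolution enters, and it reduces to the dimension estimate
\[ \dim\left(S \times_{\bC^2/\Gamma} S\right) \le \dim \bC^2 + 1 = 3, \]
the hypothesis making $S = \Gamma\text{-}\Hilb(\bC^2)$ well-behaved. For an ADE surface this holds because the exceptional fibers of $S \to \bC^2/\Gamma$ are at most $1$-dimensional chains of $\bP^1$'s; I would prove the bound by stratifying over the singular locus and bounding the fiber dimension of $p$ over each stratum. The clean way to package the consequence is Bridgeland's lemma: once this bound holds, the cohomology sheaves of $\Psi\cO_x$ are concentrated in such a way that the required orthogonality, and the computation of the self-$\Ext$ algebra, follow formally.

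Finally I would upgrade full faithfulness to an equivalence. Both categories are Calabi--Yau: $S = \cA_{m-1}$ is a crepant resolution of a Gorenstein quotient, so $K_S$ is trivial, and $\Gamma \subset \SL(2, \bC)$ forces the $\Gamma$-equivariant canonical of $\bC^2$ to be trivial; hence the Serre functors on both sides are the shift $[2]$. A fully faithful functor with adjoints between categories with Serre functors commutes with them, and the standard argument then shows that the right-orthogonal complement of the essential image vanishes. Therefore $\Psi$ is essentially surjective, and so an equivalence of categories.
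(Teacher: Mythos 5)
First, a point of comparison: the paper does not prove this theorem at all --- it is quoted from Bridgeland--King--Reid \cite{Bridgeland2001}, and the paper's only original claim (made in the following subsection) is that the machinery of BKR's proof goes through $\sT$-equivariantly. So your proposal has to be measured against the BKR argument itself. Measured that way, your outline reproduces its skeleton faithfully: $\cO_\Sigma$ is proper over both factors, so both adjoints exist by Grothendieck--Verdier duality; full faithfulness is tested on the spanning class of skyscrapers; the crucial geometric input is the bound $\dim\bigl(S \times_{\bC^2/\Gamma} S\bigr) \le 3$ (immediate for an ADE surface, the fiber over the singular point being $E \times E$ with $E$ a $1$-dimensional chain of $\bP^1$'s). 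One caution: the claim that orthogonality and the self-$\Ext$ computation then ``follow formally'' understates matters --- this is exactly where BKR invoke the intersection theorem from commutative algebra, and it is the technical heart of their paper --- but you have identified the correct ingredient.

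The genuine gap is in your final step. Neither $S$ nor $\bC^2$ is proper, so neither $D^b\cat{Coh}(S)$ nor $D^b\cat{Coh}_\Gamma(\bC^2)$ has a Serre functor (already $\Hom(\cO_S, \cO_S) = H^0(S, \cO_S)$ is infinite-dimensional), and the assertion that ``the Serre functors on both sides are the shift $[2]$'' is false as written; for the same reason the Bondal--Orlov/Bridgeland full-faithfulness criterion you cite is proved for smooth \emph{projective} varieties and does not apply verbatim. Both issues have standard repairs, and both are what BKR actually do. For the $\Hom$-computations between skyscrapers, duality is available not because the ambient spaces are proper but because the objects $\Psi\cO_x$ (the $\Gamma$-clusters) have zero-dimensional support, so duality with proper supports applies to them. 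For essential surjectivity, instead of ``a fully faithful functor commutes with Serre functors'' one compares the two adjoints of $\Psi$: as integral transforms their kernels differ by twists by $\omega_S$ and by the $\Gamma$-equivariant $\omega_{\bC^2}$, and both twists are trivial ($S \to \bC^2/\Gamma$ is crepant, and $\Gamma \subset \SL(2,\bC)$ trivializes the equivariant canonical), so the left and right adjoints agree up to shift; their kernels as functors therefore coincide, and indecomposability of $D^b\cat{Coh}_\Gamma(\bC^2)$ (connectedness of $\bC^2/\Gamma$) then kills the right orthogonal of the essential image. With those two substitutions your outline becomes the BKR proof; without them, the last third of your argument rests on functors that do not exist in this non-compact setting.
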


Although in our setting $Y = \cA_{m-1} \times C$ is a threefold, it is
easier to understand $\Psi$ for the surface case and then use the
equivalence relatively over $C$. At the level of equivariant K-theory,
$\Psi$ induces an isomorphism
\[ \Psi\colon K(S) \cong K_\Gamma(\bC^2) \]
known earlier to Gonzalez-Sprinberg and Verdier
\cite{Gonzalez-Sprinberg1983}, which provides a geometric explanation
for the classical McKay correspondence. As such, the equivalence
$\Psi$ is known as the {\it derived} McKay correspondence. We use
$\Psi$ to denote both the derived equivalence and the K-theoretic
isomorphism.

\subsubsection{}

The first step in the proof of the BS/quasimaps correspondence is to match
the virtual tangent sheaves $T^{\vir}$ on both sides, which is the
content of Proposition~\ref{prop:bs-quasimaps-Tvir-correspondence}.
This requires us to compute that the bijection of
section~\ref{sec:bs-quasimaps-bijection} between $\sT$-fixed
$\pi$-stable pairs and quasimaps is just an application of the
equivalence $\Psi$. The isomorphism of K-theories then yields a
comparison of fibers of $T^{\vir}$.

Since the $T^{\vir}$ are $\sT$-equivariant objects, it is important to
extend the equivalence $\Psi$ to the {\it fully} $\sT$-equivariant
\[ \Psi_{\sT}\colon D^b\cat{Coh}_{\sT/\Gamma}(S) \to D^b\cat{Coh}_\sT(\bC^2) \]
which we call the {\it equivariant} derived McKay equivalence. It is
straightforward but slightly tedious to check that all machinery used
in the proof \cite[Section 6]{Bridgeland2001} of Theorem~\ref{thm:bkr}
holds in the $\sT$-equivariant setting as well. Note that this is not
automatic; a similar correspondence with Fourier--Mukai kernel
$\cO_\Sigma^\vee$ instead of $\cO_\Sigma$ is defined in
\cite{Kapranov2000} and shown to be an equivalence $D^b\cat{Coh}(S)
\simeq D^b\cat{Coh}_\Gamma(\bC^2)$, but it does {\it not} extend to a
$\sT$-equivariant equivalence.

\subsubsection{}

By the general theory of Fourier--Mukai transforms, the inverse of
$\Psi_{\sT}$ must be its left (and right) adjoint
\[ \Phi_{\sT}(-) \coloneqq \left[Rp_*(\cP \otimes q^*(-))\right]^\Gamma\colon D^b\cat{Coh}_\sT(\bC^2) \to D^b\cat{Coh}_{\sT/\Gamma}(S) \]
where, $\sT$-equivariantly,
\[ \cP \coloneqq \cO_\Sigma^\vee \otimes q^*(\omega_{\bC^2})[2] = xy \,\cO_\Sigma^\vee[2]. \]
Let $\iota\colon \{0\} \to \bC^2$ be the inclusion. The crucial
computation identifying quasimap data with $\pi$-stable pair data is
the image of skyscrapers $\iota_*w \coloneqq w \cdot \iota_*\cO_0$
under $\Phi_{\sT}$, for weights $w \in K_T(\pt)$. Note that
$\Phi_{\sT}$ is $\sT/\Gamma$-linear, namely
\[ \Phi_{\sT}(w_0 \cE) = w_0 \Phi_{\sT}(\cE) \quad \forall \, w_0 \in K_{\sT/\Gamma}(\pt). \]
Hence, for $S = \cA_{m-1}$, it suffices to compute $\Phi(\iota_* x^k)$
for $0 \le k < m$. There are many ways to do so, e.g. using the same
process as in \cite[Section 2]{Kapranov2000}, but since at the end of
the day we work only in K-theory, it is instructive and fairly
straightforward to do the calculation there and then lift the result
up to $D^b\cat{Coh}$ using some interesting general theory.

\begin{lemma} \label{lem:mckay-Phi-skyscrapers}
  For $S = \cA_{m-1}$,
  \[ \Phi_{\sT}(\iota_*x^k) = \begin{cases} \cO_E(-2)[1] & k = 0 \\ \cO_{E_k}(-1) & \text{otherwise}, \end{cases} \]
  where the $\sT$-linearizations are given by
  \begin{align*}
    \cO_E(-2)\big|_{p_0} &= x_0 \\
    \cO_{E_k}(-1)\big|_{p_k} &= x_k.
  \end{align*}
\end{lemma}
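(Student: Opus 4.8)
The plan is to compute $\Phi_{\sT}(\iota_* x^k)$ by first working entirely in $\sT$-equivariant K-theory, where $\Phi_{\sT}$ acts as an explicit linear isomorphism $K_{\sT}(\bC^2) \cong K_{\sT/\Gamma}(S)$, and then lifting the K-theoretic answer to an honest object of $D^b\cat{Coh}_{\sT/\Gamma}(S)$. The K-theoretic computation is the heart of the matter: since $\Phi_{\sT}$ is the inverse of the equivariant derived McKay equivalence $\Psi_{\sT}$, I can compute either $\Phi_{\sT}$ directly via its Fourier--Mukai kernel $\cP = xy\,\cO_\Sigma^\vee[2]$, or equivalently verify the claim by checking that $\Psi_{\sT}$ sends the proposed answers back to the skyscrapers $\iota_* x^k$.

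Let me sketch the steps in order. First I would recall the K-theoretic content of the derived McKay correspondence: the classes $\{[\iota_* x^k]\}_{0 \le k < m}$ form a basis of $K_{\sT}(\bC^2) = K_{\Gamma}(\bC^2)$ indexed by the irreducible $\Gamma$-representations (equivalently by $\bZ/m$-weights), and under $\Psi_{\sT}$ they correspond to a dual basis of line-bundle classes on $S$. Concretely, the exceptional curve classes $[\cO_{E_k}]$ and the structure sheaf of $S$ organize $K_{\sT/\Gamma}(S)$, and Gonzalez-Sprinberg--Verdier theory identifies the pairing. Second, I would pin down the $\sT$-linearizations. The non-equivariant shape of the answer (that $\Phi$ sends the $k=0$ skyscraper to $\cO_E(-2)[1]$ and the others to $\cO_{E_k}(-1)$) is essentially forced by the classical McKay correspondence together with the adjunction $\Phi_{\sT} = \Psi_{\sT}^{-1}$; the only real work is tracking the equivariant weights $x_0$ and $x_k$ at the fixed points. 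For this I would use the toric fixed-point data from Figure~\ref{fig:toric-diagram-Am-C} and the relation $\cO_{\bP^1}(d)\big|_\infty = w^d \cO_{\bP^1}(d)\big|_0$ for a $\bP^1$ with coordinate weight $w$ at $0$, applied along each exceptional $E_k$, to determine the restriction weights. The degree shift $[2]$ and twist by $\omega_{\bC^2} = xy$ in the kernel $\cP$, combined with the $\Gamma$-invariant pushforward $[\,\cdot\,]^\Gamma$, dictate both the cohomological shift $[1]$ appearing only for $k=0$ and the precise weight normalizations.

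The cleanest route is probably to verify the claim in the $\Psi_{\sT}$ direction: compute $\Psi_{\sT}(\cO_{E_k}(-1))$ and $\Psi_{\sT}(\cO_E(-2)[1])$ via $\Psi(-) = Rq_*(\cO_\Sigma \otimes p^*(-\otimes R_0))$ and check these equal $\iota_* x^k$ with the asserted linearizations. Because $E_k$ and $E$ are proper curves mapping to the origin under $S \to \bC^2/\Gamma$, the pushforward $Rq_*$ localizes the computation to the fiber over $0 \in \bC^2$, which is exactly where the incidence variety $\Sigma$ and the $\Gamma$-representation theory do their work; this keeps the calculation finite and concrete. The degree $-1$ on each $E_k$ is precisely what makes $\cO_{E_k}(-1)$ have vanishing higher cohomology after the relevant twist, so that $Rq_*$ produces a single $\Gamma$-isotypic skyscraper rather than a complex; the exceptional $k=0$ case, where one uses $\cO_E(-2)$ on the full chain $E$, is where an $H^1$ contribution forces the shift $[1]$.

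\textbf{The main obstacle} I expect is bookkeeping the $\sT$-equivariant linearizations correctly through the Fourier--Mukai transform—in particular getting the weights $\cO_E(-2)\big|_{p_0} = x_0$ and $\cO_{E_k}(-1)\big|_{p_k} = x_k$ exactly right rather than off by a power of $x$, $y$, or $\hbar$. The twist by $q^*(\omega_{\bC^2}) = xy$ and the shift $[2]$ in $\cP$, together with the choice of normalization $R_0 = \bC$ in $\Psi$, all enter the weight calculation, and a sign or weight error here would be invisible non-equivariantly but fatal for the downstream matching of virtual tangent spaces in Proposition~\ref{prop:bs-quasimaps-Tvir-correspondence}. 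I would therefore anchor the computation by cross-checking the weights against the universal-family weights $\rV_k\big|_{p_a}$ from \eqref{eq:universal-family-Am}, since the $\cO_{E_k}(-1)$ appearing here should be McKay-dual to the $\cO_{E_k}(1) = \rV_k$ already computed, providing an independent consistency check on the equivariant normalization.
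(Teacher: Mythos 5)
Your proposal is correct, but it takes a genuinely different route from the paper at the decisive step. The paper works in the $\Phi$ direction: it resolves the dualized kernel $xy\,\cO_\Sigma^\vee$ by a Koszul complex of pulled-back tautological bundles $p^*\rV^\vee$, push-pulls to get the explicit three-term complex \eqref{eq:resolution-of-mckay-Phi} of $\rV_k^\vee$'s on $\cA_{m-1}$, identifies its K-theory class chart by chart using \eqref{eq:universal-family-Am}, and then -- crucially -- invokes Cautis--Licata \cite[Theorem 1.1, Section 5]{Cautis2009} to know that $\Phi_{\sT}(\iota_*x^k)$ is a shifted sheaf at all and to pin down the cohomological shift. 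Your ``cleanest route,'' verifying instead that $\Psi_{\sT}$ sends the candidate sheaves back to $\iota_*x^k$, sidesteps that external input entirely: since $p$ is finite flat with $p_*\cO_\Sigma = \rV$, the projection formula reduces $\Psi_{\sT}(\cO_{E_k}(-1)) = Rq_*(\cO_\Sigma \otimes p^*\cO_{E_k}(-1))$ to $R\Gamma\bigl(E_k, \rV\big|_{E_k} \otimes \cO_{E_k}(-1)\bigr)$ with its induced $x,y$-module structure, and the vanishing pattern you describe is exactly right: $\rV_j\big|_{E_k}\otimes\cO_{E_k}(-1)$ has no cohomology for $j \neq k$ and a single $H^0$ for $j = k$, while for $k=0$ the bundle $\rV\big|_E \otimes \cO_E(-2)$ contributes only an $H^1$ in the $\rV_0$ summand, which is what forces the shift $[1]$. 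So your approach is more self-contained (an honest derived pushforward of a sheaf, no lifting problem), at the cost of not producing the explicit complex \eqref{eq:resolution-of-mckay-Phi}, which the paper reuses later, e.g.\ in \eqref{eq:bs-quasimap-quiver-maps}, to track $\cO_Y$-module gluing data across fixed components.

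One caution: your \emph{first} proposed route -- compute the class in equivariant K-theory and then ``lift'' it to $D^b\cat{Coh}_{\sT/\Gamma}(S)$ -- has a gap as stated, since a K-theory class does not determine an object of the derived category (nor even whether the object is a sheaf up to shift); this is precisely the gap the paper fills with \cite{Cautis2009}. Your proposal survives because you then pivot to the $\Psi_{\sT}$-direction verification, which is logically sufficient ($\Phi_{\sT}$ is the inverse of the equivalence $\Psi_{\sT}$, so $\Psi_{\sT}(A_k) \cong \iota_*x^k$ implies $\Phi_{\sT}(\iota_*x^k) \cong A_k$) and carries its own determination of the shift. Your plan to anchor the equivariant weights against \eqref{eq:universal-family-Am} is also exactly what is needed, since after the projection formula the entire $\sT$-linearization question reduces to the known weights $\rV_k\big|_{p_a}$.
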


\begin{proof}
  A priori $\Phi_{\sT}(\iota_*x^k)$ is a complex of sheaves, but
  \cite[Theorem 1.1]{Cautis2009} shows it is actually some shift of a
  coherent sheaf. Computing $\Phi_{\sT}(\iota_*x^k)$ in equivariant
  K-theory identifies the sheaf, and then cohomological considerations
  identify the shift. Apply the equivariant Koszul resolution of
  $\cO_0$ simultaneously to all fibers of $\cO_\Sigma$ and dualize to
  get the equivariant resolution
  \[ xy p^*\rV^\vee \to x p^*\rV^\vee \oplus y p^*\rV^\vee \to p^*\rV^\vee \]
  of $xy\cO_\Sigma^\vee$. Here $\rV = \bigoplus_{k=0}^{m-1} \rV_k$ is
  the universal family on $\cA_{m-1}$ described in
  section~\ref{sec:universal-family-Am}; equivalently, $\rV =
  p_*\cO_\Sigma$. By push-pull, it follows that
  \begin{equation} \label{eq:resolution-of-mckay-Phi}
    \Phi_{\sT}(\iota_*x^k) = x^k \cdot \left[xy \rV^\vee_k \to x \rV^\vee_{k+1} \oplus y \rV^\vee_{k-1} \to \rV^\vee_k\right] \in D^b\cat{Coh}_{\sT/\Gamma}(\cA_{m-1}),
  \end{equation}
  where indices should be taken modulo $m$. It is now easy to identify
  the resulting sheaf by computing its K-theoretic weight in each
  chart or at each fixed point, using \eqref{eq:universal-family-Am}.
  For example, in an appropriate localization of
  $K_{\sT/\Gamma}(U_a)$, the modules
  \[ \Gamma(U_a, \Phi_{\sT}(\iota_* 1)) = \frac{1}{(1-x_a)(1-y_a)} \cdot \begin{cases} xy - x/y^{m-1} & a=0 \\ xy - 1 & 1 \le a < m-1 \\ xy - y/x^{m-1} & a = m-1 \end{cases} \]
  glue to form $-\cO_E(-2)$ with the desired linearization. (It is
  helpful to visualize this by drawing the appropriate box
  configuration in each chart.) Finally, \cite[Section 5]{Cautis2009}
  actually shows that \eqref{eq:resolution-of-mckay-Phi} has
  cohomological support in degrees $[-2, 0]$ when $k = 0$ and in
  degrees $[-1,0]$ when $k \neq 0$, which uniquely determines the
  cohomological shift of the resulting sheaves for both cases.
\end{proof}

\subsubsection{}

It remains to lift $\Phi_{\sT}$ to the threefold setting of $Y =
\cA_{m-1} \times \bP^1$. Let $X \coloneqq \bC^2 \times \bP^1$. Note
that performing $\Phi_{\sT}$ on fibers of $X$ relative to $\bP^1$
induces an equivalence
\[ \tilde\Phi_\sT\colon D^b\cat{Coh}_{\sT}(X) \to D^b\cat{Coh}_{\sT/\Gamma}(Y) \]
Let $\iota\colon \{0\} \times \bP^1 \hookrightarrow X$ be the
inclusion. Given a $\sT$-fixed quasimap $\cV \in \QMaps_V$, view it as
a flag of quiver sub-representations $\bV[n] \subset V$ as in
\eqref{eq:quasimaps-as-flags}, where the flag inclusions are
multiplication-by-$z$ maps. Then, by definition,
$\tilde\Phi_{\sT}(\iota_*\cV)$ is constructed from the pieces
$\Phi_{\sT}(\iota_*\bV[n])$ with $z$-weight $n$, linked together by
the induced multiplication-by-$z$ maps.

\begin{lemma} \label{lem:bs-quasimap-correspondence-is-mckay}
  The bijection of section~\ref{sec:bs-quasimaps-bijection} is exactly
  $\tilde\Phi_{\sT}$ in K-theory.
\end{lemma}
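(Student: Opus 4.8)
The plan is to compute $\tilde\Phi_\sT$ directly in $\sT/\Gamma$-equivariant K-theory and read off the resulting $\pi$-stable pair data. Since $\tilde\Phi_\sT$ is obtained by applying $\Phi_\sT$ fiberwise over $\bP^1$ and is $\sT/\Gamma$-linear, and since in K-theory both the quasimap flag $\bV^\bullet$ and the local-model sheaf $\cF$ split as sums indexed by the boxes $\square \in \lambda_a$ together with their hooks $\hk_a x_a^{i(\square)} y_a^{j(\square)}$, it suffices to verify the claim for a single box. Multiplying by the monomial $x_a^{i(\square)} y_a^{j(\square)}$ reduces the problem further to the generating box at the corner of $\lambda_a$, so that I only need to follow one hook's worth of degrees $\vec d^\square$ through the equivalence, one chart at a time.

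First I would rewrite the quasimap side in K-theory. By section~\ref{sec:quasimaps-fixed-loci}, the color-$c$ summand of $\cV$ attached to $\square$ is a line bundle $\cO_{\bP^1}(d_c^\square)$ with the stated linearization, so its sections over $\bP^1 \setminus \{\infty\}$ contribute a factor $1/(1-z)$ beginning at the $z$-weight fixed by $d_c^\square$; equivalently the skyscraper $\iota_* x^c$ enters the associated graded of the flag $\bV^\bullet$ at one explicitly determined $z$-weight. Applying $\Phi_\sT$ to each graded piece via Lemma~\ref{lem:mckay-Phi-skyscrapers} replaces $\iota_* x^c$ by the length-one standard rod $\cO_{E_c}(-1)$ when $c \neq 0$ and by $-\cO_E(-2)$ when $c = 0$. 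Reassembling these images with their $z$-weights produces, slice by slice in the $z$-direction, a $\sT$-fixed configuration on $\cA_{m-1} \times \bP^1$.

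It then remains to recognise this configuration as the local model with data $\vec e^\square$ and to verify $\vec e^\square = \vec d^\square$. The mechanism is that at every $z$-weight lying above the onset of all $m$ colors, the full collection of skyscraper images reassembles, via the equivariant McKay relations, into the single leg box over $p_a$, so that the solid infinite leg $L_a$ is recovered and only finitely many rods, coming from the $z$-weights where not all colors have yet turned on, survive. Reading off these onsets against the definitions of section~\ref{sec:bs-quasimaps-bijection} should give the base level $e_0^\square = d_0^\square$ and the rod count $e_c^\square - e_0^\square = d_c^\square - d_0^\square$ over each $E_c$, which is precisely $\vec e^\square = \vec d^\square$.

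The step I expect to be the main obstacle is the assembly: showing that the infinite stack of skyscraper images under $\Phi_\sT$ collapses, using the exact linearizations of Lemma~\ref{lem:mckay-Phi-skyscrapers}, into the honest solid leg $L_a$ together with only finitely many standard rods. Concretely this rests on the equivariant McKay identity that the full hook $\hk_a$, a copy of each color carrying the weights of section~\ref{sec:universal-family-Am}, maps under $\Phi_\sT$ to the structure sheaf of the fixed point $p_a$; the cohomological shift for the color-$0$ skyscraper is exactly what makes this cancellation work at the level of genuine sheaves rather than merely virtual classes, and tracking it carefully is essential both to land on $e_0^\square = d_0^\square$ with the correct sign and to account, once several such legs are glued, for the boxes that acquire multiplicity.
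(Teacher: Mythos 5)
Your proposal is correct and follows essentially the same route as the paper: reduce by $\sT/\Gamma$-linearity (and multiplication by $x_a^{i(\square)}y_a^{j(\square)}$) to a single degree-labeled hook, apply Lemma~\ref{lem:mckay-Phi-skyscrapers} to the skyscrapers, observe that partial hooks assemble into standard leftward/rightward rods while the full hook assembles into the single leg box at $p_a$, and glue the slices along the multiplication-by-$z$ maps to recover the local model with $\vec e^\square = \vec d^\square$. The "assembly" step you flag as the main obstacle is precisely the paper's first bullet ($\Phi_{\sT}(\iota_*\hk_a)$ is a single box of weight $1$), an immediate K-theoretic sum of the classes in Lemma~\ref{lem:mckay-Phi-skyscrapers}, so your slice-by-slice organization via the associated graded of $\bV^\bullet$ differs from the paper's presentation only cosmetically.
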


\begin{proof}
  From the discussion above and the $T/\Gamma$-linearity of $\tilde
  \Phi_{\sT}$, it suffices to compute the image of the hook $\hk_a$
  labeled with the degrees $\vec d^\square$ and show that the result
  is the local model with data $\vec e^\square = \vec d^\square$. Here
  $\hk_a$ is the $m$-hook corresponding to the $a$-th leg, from
  \eqref{eq:m-hook}. Using the calculation of
  Lemma~\ref{lem:mckay-Phi-skyscrapers}:
  \begin{itemize}
  \item $\Phi_{\sT}(\iota_*\hk_a)$ is a single box $\cube \in U_a$ of
    weight $1$;
  \item $\Phi_{\sT}(\iota_*(x^{a-k}+ \cdots + x^a))$ is a standard
    leftward rod of length $k$ from $\cube$;
  \item $\Phi_{\sT}(\iota_*(y^{a-k}+ \cdots + y^a))$ is a standard
    rightward rod of length $k$ from $\cube$. Here we are using that
    $\Phi_{\sT}(\iota_*y^{m-j}) = y_j \Phi_{\sT}(\iota_*x^j)$.
  \end{itemize}
  Attaching these pieces together with the natural
  multiplication-by-$z$ maps yields the desired local model.
\end{proof}

\subsubsection{}

Let $I^\bullet = [\cO_Y \to \cF]$ be a $\sT$-fixed $\pi$-stable pair
with corresponding $\sT$-fixed quasimap $\cV$. In this notation,
Lemma~\ref{lem:bs-quasimap-correspondence-is-mckay} shows that
\[ \cF = \tilde\Phi_{\sT}(\iota_*\cV) \in K_{\sT/\Gamma}(Y). \]
It is easy to verify that $\tilde\Phi_{\sT}(\cO_X) = \cO_Y$. Since
$I^\bullet = \cO_Y - \cF$, it follows that
\[ I^\bullet = \tilde\Phi_{\sT}(I_{\cV}) \]
where $I_{\cV} \coloneqq \cO_X - i_*\cV$.

\begin{proposition} \label{prop:bs-quasimaps-Tvir-correspondence}
  In $K_{\sT/\Gamma}(\pt)$,
  \[ T^{\vir}_{[I^\bullet]}\PiPairs = T^{\vir}_{\cV}\QMaps. \]
\end{proposition}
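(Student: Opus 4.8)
The plan is to transport the whole computation from $Y$ to $X = \bC^2\times\bP^1$ through the equivalence $\tilde\Phi_\sT$ of section~\ref{sec:bs-quasimaps-Tvir-correspondence}, where it degenerates into an elementary Koszul calculation that reproduces \eqref{eq:quasimaps-Tvir}. The one structural input is that an equivalence of triangulated categories induces an isometry of Grothendieck groups for the Euler pairing $\chi(-,-) = \sum_i(-1)^i\Ext^i(-,-)$, which is bilinear and descends to K-theory (so it makes sense even on the class $I_{\cV} = \cO_X - i_*\cV$, which is only a K-class). Because $\tilde\Phi_\sT$ is $\sT/\Gamma$-linear but merely $\Gamma$-equivariant on the $X$ side, twisting the second argument by the characters of $\widehat{\sT/\Gamma}\subset\widehat\sT$ recovers exactly the $\Gamma$-trivial weights of the full $\sT$-character, so the induced isometry reads
\[ \chi_Y^{\sT/\Gamma}\bigl(\tilde\Phi_\sT\alpha, \tilde\Phi_\sT\beta\bigr) = \bigl[\chi_X^{\sT}(\alpha,\beta)\bigr]^{\Gamma} \qquad \forall\, \alpha,\beta \in K_\sT(X), \]
the invariant part being taken inside $K_\sT(\pt)$ and reinterpreted in $K_{\sT/\Gamma}(\pt)$.

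First I would feed in the classes $\cO_X$ and $I_{\cV}$. By Lemma~\ref{lem:bs-quasimap-correspondence-is-mckay} and the remark following it, $\cO_Y = \tilde\Phi_\sT(\cO_X)$ and $I^\bullet = \tilde\Phi_\sT(I_{\cV})$ in K-theory, so the universal formula \eqref{eq:bs-Tvir} together with the displayed isometry gives
\[ T^{\vir}_{[I^\bullet]}\PiPairs = \chi_Y(\cO_Y,\cO_Y) - \chi_Y(I^\bullet,I^\bullet) = \bigl[\chi_X(\cO_X,\cO_X) - \chi_X(I_{\cV},I_{\cV})\bigr]^\Gamma. \]
The individual $\chi(\cO,\cO)$ terms are formal but their difference is finite, and $\cO_X$ is $\Gamma$-trivial so contributes an already-invariant class. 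Expanding $I_{\cV} = \cO_X - i_*\cV$ bilinearly reduces the right-hand side to the three pairings $\chi_X(\cO_X, i_*\cV)$, $\chi_X(i_*\cV, \cO_X)$ and $\chi_X(i_*\cV, i_*\cV)$.

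These I would evaluate by push-pull along the regular codimension-$2$ embedding $i\colon\{0\}\times\bP^1\hookrightarrow X$, whose normal bundle has $\sT$-weights $x^{-1},y^{-1}$ and determinant $\hbar = 1/xy$. Adjunction and the Koszul resolution of $i_*\cO$ give $\chi_X(\cO_X,i_*\cV) = H^\bullet(\bP^1,\cV)$, then $\chi_X(i_*\cV,\cO_X) = \hbar\,H^\bullet(\bP^1,\cV^\vee)$ (from $i^!\cO_X = \det N[-2]$), and $\chi_X(i_*\cV,i_*\cV) = (1-x^{-1})(1-y^{-1})\,\chi_{\bP^1}(\cV,\cV)$. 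Writing $\cV=\bigoplus_k\cV_k$ with $\cV_k$ of $\Gamma$-weight $k$ and taking $\Gamma$-invariants, the factor $(1-x^{-1})(1-y^{-1}) = 1 - x^{-1} - y^{-1} + \hbar$ picks out the color-preserving term $(1+\hbar)\sum_k H^\bullet(\Hom(\cV_k,\cV_k))$ and the two color-shifting quiver-edge terms $\tfrac1x\sum_k H^\bullet(\Hom(\cV_k,\cV_{k+1}))$ and $\tfrac1y\sum_k H^\bullet(\Hom(\cV_{k+1},\cV_k))$, while the first two pairings contribute $H^\bullet(\cV_0)$ and $\hbar\,H^\bullet(\cV_0^\vee)$. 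Assembling these and comparing with the expansion of $T^*\bigl(\cV_0+\tfrac1x\sum\Hom(\cV_i,\cV_{i+1})\bigr) - (1+\hbar)\sum\Hom(\cV_i,\cV_i)$ under $H^\bullet$ yields \eqref{eq:quasimaps-Tvir} on the nose.

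The genuinely delicate point is the equivariance bookkeeping of the first paragraph: one must verify that passing from $\sT$ on $X$ to $\sT/\Gamma$ on $Y$ through $\tilde\Phi_\sT$ really amounts to extracting the $\Gamma$-invariant part of the Euler pairing and loses no weight information. Granting that, the remaining steps are the formal isometry property of derived equivalences and a routine Koszul computation, whose only hazard is tracking the $\sT$-weights consistently with the paper's sign conventions for $\hbar$ and for the normal directions of $i$.
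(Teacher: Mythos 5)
Your proposal is correct and is essentially the paper's own proof: both arguments transport the Euler pairing through $\tilde\Phi_{\sT}$ (which converts it into the $\Gamma$-invariant part of the $\sT$-equivariant pairing on $X = \bC^2\times\bP^1$), then reduce to the standard relative-over-$\bP^1$ tangent-space computation for the Hilbert scheme of points on $\bC^2$, and finally take $\Gamma$-invariants to recover \eqref{eq:quasimaps-Tvir}. The only difference is presentational: where you expand $I_{\cV} = \cO_X - i_*\cV$ bilinearly and carry out the Koszul/adjunction computation by hand (correctly, including the normal weights $x^{-1},y^{-1}$ and the factor $(1-x^{-1})(1-y^{-1})$), the paper routes the same step through the local-to-global $\cExt$ spectral sequence and cites the known sheafy formula \eqref{eq:relative-tangent-space-hilbert-scheme-C2}.
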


\begin{proof}
  By the local-to-global Ext spectral sequence,
  \[ T^{\vir}_{[I^\bullet]} = H^\bullet_Y\left(\cExt_Y^\bullet(\cO_Y, \cO_Y) - \cExt_Y^\bullet(I^\bullet, I^\bullet)\right). \]
  Using that $\tilde\Phi_{\sT}$ is an equivalence of categories,
  \[ T^{\vir}_{[I^\bullet]} = H^\bullet_X\left(\cExt_X^\bullet(\cO_X, \cO_X) - \cExt_X^\bullet(I_{\cV}, I_{\cV})\right)^\Gamma. \]
  The quantity in the rhs that $H^\bullet_X$ is applied to is a sheafy
  version of $T_V\Hilb(\bC^2)$, for which the usual formula applies
  (see e.g. \cite[Section 4.7]{Maulik2006}):
  \begin{equation} \label{eq:relative-tangent-space-hilbert-scheme-C2}
    \cExt_X^\bullet(\cO_X, \cO_X) - \cExt_X^\bullet(I_{\cV}, I_{\cV}) = \iota_*\left(\cV + \frac{\cV^\vee}{xy} - \cV \cV^\vee \frac{(1-x)(1-y)}{xy}\right).
  \end{equation}
  Use $H^\bullet_X \circ \iota_* = H^\bullet_{\bP^1}$ to simplify.
  Since $(-)^\Gamma$ commutes with $H^\bullet_{\bP^1}$, we can take
  $\Gamma$-invariants first. But $\Gamma$-invariants of
  \eqref{eq:relative-tangent-space-hilbert-scheme-C2} yields exactly a
  sheafy version of $T_V\Hilb(\cA_{m-1})$. Then taking
  $H^\bullet_{\bP^1}$ gives $T^{\vir}_{\cV}\QMaps$, as in
  \eqref{eq:quasimaps-Tvir}.
\end{proof}

\subsubsection{}
\label{sec:bs-quasimap-correspondence-refinement}

Although the description of $\tilde\Phi_{\sT}$ so far is as a {\it
  bijection} between $\sT$-fixed components of $\pi$-stable pairs and
quasimaps, it naturally refines into an {\it isomorphism} of fixed
components as follows. The description of $\Phi_{\sT}$ in
Lemma~\ref{lem:mckay-Phi-skyscrapers} should be viewed as a
correspondence between individual squares in each $\bV[n]$ of a
quasimap and individual standard rods in $\cF$ of a $\pi$-stable pair.
The remaining data on either side is how these individual pieces
attach or glue to each other, either via quiver maps or $\cO_Y$-module
maps. Hence it remains to match the gluing data on both sides.

\begin{example}
  On $\cA_1$, remembering the quiver maps from color-$0$ squares (to
  non-color-$0$ squares) as $\tilde\Phi_{\sT}$ is applied yields, for
  example,
  \begin{equation} \label{eq:bs-quasimap-quiver-maps}
    \begin{tikzpicture}[scale=0.5,baseline={([yshift=0.5ex]current bounding box.center)}]
      \draw (0,0) rectangle (1,2);
      \draw (0,1) -- (2,1) -- (2,0) -- (1,0);
      \draw[blue,->] (0.5,0.2) to[looseness=2,in=-90,out=-90] node[label=right:$a$]{} (1.5,0.2);
      \draw[blue,->] (0.2,0.5) to[looseness=2,in=180,out=180] node[label=left:$b$]{} (0.2,1.5);
    \end{tikzpicture}
    \quad \leadsto \quad
    \left[\cO_E(-2) \xrightarrow{\begin{psmallmatrix} \textcolor{blue}{a} \\ \textcolor{blue}{b} \end{psmallmatrix}} \cO_E(-1) \oplus \cO_E(-1) y/x\right]
  \end{equation}
  where the linearizations on the sheaves are the ones in
  Lemma~\ref{lem:mckay-Phi-skyscrapers}. If both $a$ and $b$ are
  non-zero, then the resulting complex is quasi-isomorphic to the
  $\cO_Y$-module $\cO_E$, whereas if only one is zero, then the result
  is (non-equivariantly) $\cO_{\pt} \oplus \cO_E(-1)$. This matches
  the degrees of freedom (for a $\bP^1$ fixed component) in
  Example~\ref{ex:bs-nontrivial-fixed-loci} and
  Example~\ref{ex:quasimaps-nontrivial-fixed-loci}.
\end{example}

The above example is representative of the general case, at any
$\cA_1$ piece inside $\cA_{m-1}$, for how quiver maps from color-$0$
squares correspond to $\cO_Y$-module maps involving certain
boxes/rods.

For quiver maps from non-color-$0$ squares, the $\cO_Y$-module maps
that correspond to each quiver map are described implicitly in the
proof of Lemma~\ref{lem:bs-quasimap-module-condition}. Split each
$V^{(a)}(\vec\lambda)$ into its hooks. The quiver maps between squares
from the same hook correspond to $\cO_Y$-module maps within each local
model prescribing how standard rods are attached to each other. The
quiver maps between squares from different hooks correspond to maps
from white to gray boxes in Figures
\ref{fig:bs-local-model-adjacent-inequalities} and
\ref{fig:bs-local-model-diagonal-inequalities}.

\begin{remark}
  After localization to fixed components, the vertex on either side is
  computed by a further integration $\chi(F, \hat\cO^{\vir})$ over
  each fixed component $F$. The isomorphism of fixed components along
  with the equality of $T^{\vir}$ given by
  Proposition~\ref{prop:bs-quasimaps-Tvir-correspondence} implies the
  equality of $\chi(F, \hat\cO^{\vir})$ in BS and quasimap theories.
\end{remark}

\subsection{Stability conditions}
\label{sec:stability-conditions}

\subsubsection{}

It remains to show that $F \subset \PiPairs_{\vec\lambda}^{\sT}$ iff
$\tilde\Phi_{\sT}(F) \subset \QMaps_{\vec\lambda}^{\sT}$. The idea is
to identify two combinatorial conditions on the degrees
\[ \{d_\vsquare \; | \; \vsquare \in V(\vec\lambda)\}, \]
and then to show that on the BS side they are equivalent to the
corresponding $\cF$ forming a $\pi$-stable pair, and on the quasimap
side they are equivalent to the corresponding $\cV$ forming a stable
quasimap. 

\begin{proposition} \label{prop:bs-quasimap-stability-correspondence}
  The degrees
  \[ \{d_\vsquare \; | \; \vsquare \in V(\vec\lambda)\} \]
  define a $\pi$-stable pair $[\cO_Y \to \cF] \in \PiPairs_V$ iff they
  define a stable quasimap $\cV \in \QMaps_V$.
\end{proposition}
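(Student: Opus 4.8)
The plan is to extract from each stability condition the \emph{same} combinatorial constraints on the degree data $\{d_\vsquare \mid \vsquare \in V(\vec\lambda)\}$, reducing the proposition to the observation that these two sets of constraints literally coincide under the dictionary $\vec e^\square = \vec d^\square$ of \eqref{eq:bs-quasimaps-bijection}. Concretely I expect two families of inequalities: an \emph{adjacency} family, comparing $d_\vsquare$ with the degrees of the quiver-neighbors $x\cdot\vsquare$ and $y\cdot\vsquare$, and a \emph{diagonal} family, comparing $d_\vsquare$ with the degree of $xy\cdot\vsquare$; together with a single \emph{boundary} constraint localized at the color-$0$ box $\vsquare = (0,0)$. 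That these are the relevant comparisons is already visible in Lemma~\ref{lem:quasimap-Tvir-fixed-term}, whose three terms are precisely of these three types.

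On the quasimap side I would argue as follows. By Proposition~\ref{prop:quasimap-as-framed-quiver-bundle}, $\cV$ is a stable quasimap iff the flag $\bV^\bullet$ is a flag of quiver sub-representations of $V$ admitting a framing $\bW^\bullet \hookrightarrow \bV^\bullet$. Unwinding \eqref{eq:quasimaps-as-flags}, each box enters the flag at a $z$-degree read off from $d_\vsquare$, and the requirement that every slice $\bV[n]$ be closed under the quiver maps $B$ and $\bar B$ is exactly the adjacency and diagonal inequalities. Since $\cW = \cO_{\bP^1}$ is framed only in non-negative $z$-degree, with framing image the diagonal vector $(1,\dots,1) \in (V_0)_0$, the existence of the framing $\bW^\bullet \hookrightarrow \bV^\bullet$ is exactly the boundary constraint at the color-$0$ box.

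On the BS side I would use Proposition~\ref{prop:bs-local-model-filtration} and its converse Proposition~\ref{prop:bs-local-model-converse} to reduce $\pi$-stability to the compatible gluing of the local models attached to each $\square \in \lambda_a$. The torsion-pair conditions $\cQ \in \cat{T}$ and $\cF \in \cat{T}^\perp$ are already built into the local models, since Lemmas~\ref{lem:bs-rod-degree-lower-bound} and \ref{lem:bs-rod-degree-upper-bound} force all their rods to be standard; what remains is that these local models assemble into an honest $\cO_Y$-module, which is the content of Lemma~\ref{lem:bs-quasimap-module-condition} and whose inequalities are recorded in Figures~\ref{fig:bs-local-model-adjacent-inequalities} and \ref{fig:bs-local-model-diagonal-inequalities}. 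Translating $\vec e^\square = \vec d^\square$, the white-to-gray module maps between hooks reproduce the adjacency and diagonal inequalities, while the requirement that $\cO_C \hookrightarrow \cF$ for the curve $C$ with legs $\vec\lambda$ --- equivalently, that the generating box of each local model be non-exposed --- reproduces the boundary constraint.

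Comparing the two lists then gives $\pi$-stability $\Leftrightarrow$ stable quasimap. I expect the main obstacle to be the framing, which is the one ingredient lacking a symmetric counterpart: on the quasimap side it is imposed only in non-negative $z$-degree, and matching it with $\cO_C \hookrightarrow \cF$ requires care precisely at boxes of multiplicity $2$, where a leftward and a rightward standard rod meet. There one must check that the diagonal framing vector $(1,\dots,1)$ is exactly the vector singled out by the map $\begin{psmallmatrix} 1 & 1 \end{psmallmatrix}$ in the definition of a local model, as in Example~\ref{ex:quasimaps-nontrivial-fixed-loci}; once this single compatibility is pinned down, the remaining verifications are the routine bookkeeping of matching inequalities.
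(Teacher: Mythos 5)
Your proposal follows the paper's own proof essentially verbatim: the paper reduces the statement to exactly the three ingredients you name --- Lemma~\ref{lem:bs-quasimap-module-condition} (valid $\cO_Y$-module $\Leftrightarrow$ valid quiver bundle, via the adjacency/diagonal inequalities of Figures~\ref{fig:bs-local-model-adjacent-inequalities} and \ref{fig:bs-local-model-diagonal-inequalities}), Propositions~\ref{prop:bs-local-model-converse} and \ref{prop:quasimap-as-framed-quiver-bundle} (stability on each side reduces to the existence of $\cO_C \hookrightarrow \cF$, resp.\ a framing $\cW \hookrightarrow \cV$), and finally Lemma~\ref{lem:bs-quasimaps-framing}, which is precisely the ``boundary constraint'' matching you flag as the main obstacle, resolved in the paper exactly as you anticipate via the diagonal vector $(1,\dots,1)$ in the sub-flag $\bV_C^\bullet$ at multiplicity-$2$ boxes.
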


\begin{proof}
  It will follow from Lemma~\ref{lem:bs-quasimap-module-condition}
  that $\cF$ is a valid $\cO_Y$-module iff $\cV$ is a valid quiver
  bundle. Proposition~\ref{prop:bs-local-model-converse} shows that a
  sheaf $\cF$ built from local models is a $\pi$-stable pair iff it
  admits an inclusion $\cO_C \hookrightarrow \cF$. Similarly,
  Proposition~\ref{prop:quasimap-as-framed-quiver-bundle} shows that a
  quiver bundle $\cV$ is a stable quasimap iff it admits a framing
  $\cW \hookrightarrow \cV$. Then, to conclude,
  Lemma~\ref{lem:bs-quasimaps-framing} will show that the quiver
  bundle $\cV$ admits an inclusion from $\cW = \cO_{\bP^1}$ iff $\cF$
  admits an inclusion from $\cO_C$.
\end{proof}

\subsubsection{}

Let $\vec\lambda \in \Hilb(\cA_{m-1})^{\sT'}$, and consider the
labeling of $V(\vec\lambda) = \sum_a V^{(a)}(\vec\lambda)$ by the
degrees $\{d_{\vsquare}\}$. By viewing quiver bundles as flags of
quiver representations or otherwise, it is clear that in order for
this labeling to produce a valid quiver bundle $\cV$, the degrees of
boxes in each $V^{(a)}$ must satisfy the property
\begin{equation} \label{eq:quasimap-degree-condition}
  d_\vsquare \le d_{x \cdot \vsquare}, d_{y \cdot \vsquare} \quad \forall \;\vsquare \in V^{(a)}
\end{equation}
whenever the squares $x \cdot \vsquare$ and $y \cdot \vsquare$ exist
in $V^{(a)}$. There is an ambiguity in identifying which degrees are
for which $V^{(a)}$ at squares with multiplicity, but in order for the
resulting $\cV$ to form a valid quiver bundle there must be at least
one identification satisfying property
\eqref{eq:quasimap-degree-condition}. This is analogous to picking the
filtration for BS local models in
Proposition~\ref{prop:bs-local-model-filtration}. Hence, in what
follows, it suffices to work with a {\it single} $V^{(a)}$.
Equivalently, we assume that $\vec\lambda$ is empty except at
$\lambda_a \neq \emptyset$, so that $V = V^{(a)}$.

\begin{lemma} \label{lem:bs-quasimap-module-condition}
  The degrees
  \[ \{d_\vsquare \; | \; \vsquare \in V^{(a)}\} \]
  define a valid $\cO_Y$-module $\cF^{(a)}$ iff they satisfy the
  condition \eqref{eq:quasimap-degree-condition}.
\end{lemma}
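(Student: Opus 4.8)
The plan is to reduce to a single leg and then read off the $\cO_Y$-module structure from the equivariant McKay dictionary of Lemma~\ref{lem:bs-quasimap-correspondence-is-mckay}. Since we have arranged $\vec\lambda$ to be supported only on $\lambda_a$, we have $V = V^{(a)}$, and by Lemma~\ref{lem:bs-quasimap-correspondence-is-mckay} each square $\vsquare \in V^{(a)}$, labelled with its degree $d_\vsquare$, corresponds under $\tilde\Phi_{\sT}$ to a base box $\cube$ or to a standard rod attached to it. Splitting $V^{(a)}$ into the hooks $\hk_a \cdot \square$ for $\square \in \lambda_a$, this produces exactly the local models of Proposition~\ref{prop:bs-local-model}, assembled as in Proposition~\ref{prop:bs-local-model-filtration}. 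Reconstructing the candidate sheaf $\cF^{(a)}$ then amounts to equipping this box/rod configuration with multiplication-by-$x_a$, $y_a$ and $z$ maps; the $z$-maps inside a single hook's local model are already forced by Proposition~\ref{prop:bs-local-model}, so the only data left to check is the compatibility of the surface-direction maps relating a square $\vsquare$ to its neighbours $x\cdot\vsquare$ and $y\cdot\vsquare$ in $V^{(a)}$, both within a hook (consecutive colors) and between adjacent hooks.

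The key local fact is that the required surface-direction $\cO_Y$-module map exists precisely when $d_\vsquare \le d_{\vsquare'}$ for $\vsquare' \in \{x\cdot\vsquare, y\cdot\vsquare\}$. Because $\tilde\Phi_{\sT}$ is an equivalence of categories it identifies the relevant $\Hom$-spaces: the quiver map between the line bundles $\cO_{\bP^1}(d_\vsquare)$ and $\cO_{\bP^1}(d_{\vsquare'})$ underlying $\vsquare$ and $\vsquare'$ corresponds to the connecting morphism between the associated rods in $\cF^{(a)}$, and a nonzero equivariant morphism on either side exists iff $d_\vsquare \le d_{\vsquare'}$. Reading this through the explicit images $\Phi_{\sT}(\iota_* x^k) = \cO_{E_k}(-1)$ and $\Phi_{\sT}(\iota_*1) = \cO_E(-2)[1]$ shows concretely how the map glues the configuration of $\vsquare$ onto that of $\vsquare'$; I would record the within-hook and between-hook cases as the white-to-gray box maps of Figures~\ref{fig:bs-local-model-adjacent-inequalities} and~\ref{fig:bs-local-model-diagonal-inequalities}. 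The forward direction then says that if $\cF^{(a)}$ is a genuine module all of these maps are present, forcing \eqref{eq:quasimap-degree-condition}; the converse says that once \eqref{eq:quasimap-degree-condition} holds all the connecting maps exist, and since every box eventually generates the leg $L_a$ (section~\ref{sec:bs-boxes-rods-assembly}) they assemble into commuting, finitely generated $\cO_Y$-operators realizing exactly the prescribed boxes and rods.

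The main obstacle is the bookkeeping at squares of multiplicity $> 1$ and at the corner boxes of the hooks. When $\vsquare$ has multiplicity, the degrees $d_\vsquare$ form a multiset, so \eqref{eq:quasimap-degree-condition} must be read as requiring a compatible matching of degrees across $\vsquare$ and its neighbours — exactly the freedom of choosing the local-model filtration in Proposition~\ref{prop:bs-local-model-filtration}. In particular, at a corner box that emits both a leftward and a rightward standard rod one must reproduce the multiplicity-$2$ structure with diagonal map $\begin{psmallmatrix} 1 & 1 \end{psmallmatrix}$ from the definition of a local model, and verify that no spurious exposed rod of too-large degree is created in the gluing. I expect this matching — showing that a module realization exists for \emph{some} admissible assignment of the multiset of degrees precisely when the inequalities can be satisfied simultaneously — to be the delicate step, whereas the single-box inequalities are immediate from the $\Hom$-preservation argument above.
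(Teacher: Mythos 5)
Your setup (reduction to a single leg, splitting $V^{(a)}$ into hooks, treating within-hook and between-hook adjacencies, and the multiplicity bookkeeping at the end) matches the paper's proof. But your central step --- transferring the question of whether the box/rod configuration carries an $\cO_Y$-module structure through $\tilde\Phi_{\sT}$ on the grounds that the equivalence ``identifies the relevant $\Hom$-spaces'' --- has a genuine gap, in two respects. First, the quiver maps are not morphisms in the categories that $\tilde\Phi_{\sT}$ relates: they carry the nontrivial $\sT$-weight $x^{\pm 1}$ (or $y^{\pm 1}$), which is not $\Gamma$-invariant, so they cannot be transported along the $\sT/\Gamma$-linear equivalence as Homs between objects. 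Concretely, on the $Y$ side the sheaf Homs between rods supported on different exceptional components, e.g.\ between $\cO_{E_k}(-1)$ and $\cO_{E_{k+1}}(-1)$ with any equivariant twist, vanish identically for support and purity reasons, no matter what the degrees are; so the claim ``a nonzero equivariant morphism between the associated rods exists iff $d_\vsquare \le d_{\vsquare'}$'' is false on the BS side if read, as you propose, as morphisms between the rods. What the quiver maps actually become under $\tilde\Phi_{\sT}$ is internal gluing data --- differentials in complexes such as \eqref{eq:bs-quasimap-quiver-maps} --- not maps between subobjects or summands of $\cF^{(a)}$.

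Second, even granting a dictionary between quiver maps and gluing data, the property the lemma asserts --- that the glued object is an honest sheaf, i.e.\ a valid $\cO_Y$-module --- is a statement about the heart of the t-structure, and derived equivalences do not preserve hearts: $\Phi_{\sT}(\iota_*1) = \cO_E(-2)[1]$ already shows that images of perfectly good modules can be shifted complexes, and whether the assembled object has cohomology in a single degree depends on exactly the data in question. So neither direction of the ``iff'' follows formally from the equivalence; this t-exactness is precisely what must be checked by hand, and indeed the paper deduces the sheaf-level statement (Proposition~\ref{prop:bs-qmaps-moduli-isomorphism}) \emph{from} this fixed-point combinatorics, not the other way around. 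The paper's actual proof stays entirely on the $Y$ side: within a hook, validity is the local-model analysis of section~\ref{sec:bs-local-model}; for hooks attached by $x_a$ or $y_a$, the multiplication maps are forced to be nonzero because every box eventually generates the leg $L_a$, which yields the inequalities; and for hooks related by $xy$, the inequalities \eqref{eq:quasimap-diagonal-inequality-right}--\eqref{eq:quasimap-diagonal-inequality-up} come from a rod-nesting argument in the $\cO \oplus \cO(-2)$ geometry (standard rods stacked in the $z$-direction must strictly increase in length). Your proposal contains no substitute for these arguments --- in particular for why the module maps must be nonzero in the forward direction, and for the diagonal case --- so the gap is essential rather than cosmetic.
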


\begin{proof}
  It is easier to group the squares in $V^{(a)}$ into the hooks
  associated to each $\square \in \lambda_a$, and work with the
  degrees $\{\vec d^\square\}_{\square \in \lambda_a}$ instead of the
  degrees $\{d_\vsquare\}_{\vsquare \in V^{(a)}}$. Then to check
  condition \eqref{eq:quasimap-degree-condition} using $\{\vec
  d^\square\}_{\square \in \lambda_a}$, it is equivalent to check it
  holds for the squares in each individual hook and then check that it
  holds for adjacent squares coming from two different hooks. For
  example, consider the hook in $V^{(a)}$ associated to $\square \in
  \lambda_a$, whose squares have degrees $\vec d^\square$. From the
  discussion of section~\ref{sec:bs-local-model}, it is clear that the
  local model given by the single hook forms a valid $\cO_Y$-module
  iff the degrees of the hook satisfy
  \eqref{eq:quasimap-degree-condition}.

  Now consider two different hooks given by $\square$ and $\square'
  \coloneqq x_a \cdot \square$, whenever both are in $\lambda_a$. The
  corresponding hooks in $V^{(a)}$ are adjacent at exactly one
  location, and the desired inequality is
  \[ d^{\square'}_{a+1} \ge d^{\square}_a. \]
  The relevant portion of the local models is shown in
  Figure~\ref{fig:bs-local-model-adjacent-inequalities-right}, where
  boxes in the local model for $\square'$ are colored gray. Suppose
  $\cube \coloneqq z^k \cdot \square$ generates a standard leftward
  rod. Since the $\square'$ column exists in $L_a$, it must also
  generate a box $\cube\,' \coloneqq x_a \cdot \cube$. This box
  $\cube\,'$ must be part of the local model associated to $\square'$.
  Moreover, $\cube\,'$ cannot generate a standard leftward rod;
  otherwise there is no non-zero $\cO_Y$-module map $x_a \colon \cube
  \to \cube\,'$. Hence $\cube\,'$ generates a standard rightward rod,
  and the desired inequality follows. Note that an analogous argument
  can be given for $\square$ and $\square' \coloneqq y_a \cdot
  \square$, as shown in
  Figure~\ref{fig:bs-local-model-adjacent-inequalities-up}, where the
  desired inequality is $d^{\square'}_a \ge d^{\square}_{a+1}$.

  \begin{figure}[h]
    \centering
    \begin{subfigure}[t]{0.45\textwidth}
      \centering
      \begin{tikzpicture}[scale=0.6]
        \foreach \y in {0,-1,...,-4} {
          \foreach \x in {0,1,2} {
            \draw[fill=gray!40] (\x,\y) -- ++(1,0) -- ++(0,-1) -- ++(-1,0) -- ++(0,1);
          }
          \draw[fill=gray!40] (3.5,\y) -- ++(-0.5,0) -- ++(0,-1) -- ++(0.5,0);
        }
        \draw[fill=gray!40] (4,0.5) -- ++(-0.5,0) -- ++(-0.5,-0.5) -- ++(0.5,0);
        \draw[fill=gray!40] (0,1) -- ++(1,0) -- ++(0,-1) -- ++(-1,0) -- ++(0,1);
        \foreach \x in {1,2} {
          \draw[fill=gray!40] (\x,0) -- ++(1,0) -- ++(0.5,0.5) -- ++(-1,0) -- ++(-0.5,-0.5);
        }
        \draw[fill=gray!40] (1,0) -- ++(0,1) -- ++(0.5,0.5) -- ++(0,-1) -- ++(-0.5,-0.5);
        \draw[fill=gray!40] (0,1.5) -- (0,1) -- (1,1) -- (1,1.5);
        \draw[fill=gray!40] (1,1.5) -- (1,1) -- (1.5,1.5) -- (1.5,2);
        \draw (-1,0) -- (-2.25,-1.25);
        \foreach \y in {0,-1,-2,-3} {
          \draw (3.5,\y) -- (0,\y) -- (-1.25,\y-1.25);
        };
        \foreach \x in {0,-0.5,-1} {
          \draw (\x-1,\x) -- (\x,\x) -- (\x,\x-3);
        };
        \draw (-1,1.5) -- (-1,0);
        \draw (-1,1) -- (0,1);
        \node at (-2,-2.55) {$z^{d^{\square}_a-2}$};
        \node at (-2,-3.55) {$z^{d^{\square}_a-1}$};
        \node at (-2,-4.55) {$z^{d^{\square}_a}$};
        \node at (4.3,-3) {$z^{d^{\square}_a}$};
        \node at (4.3,-5) {$z^{d^{\square'}_{a+1}}$};
      \end{tikzpicture}
      \caption{Multiplication by $x_a$}
      \label{fig:bs-local-model-adjacent-inequalities-right}
    \end{subfigure}%
    ~
    \begin{subfigure}[t]{0.45\textwidth}
      \centering
      \begin{tikzpicture}[scale=0.6]
        \foreach \y in {0,-1,...,-4} {
          \foreach \x in {0,-0.5,-1} {
            \draw[fill=gray!40] (\x,\y+\x) -- ++(-0.5,-0.5) -- ++(0,-1) -- ++(0.5,0.5) -- ++(0,1);
          }
          \draw[fill=gray!40] (-1.75,\y-1.75) -- ++(0.25,0.25) -- ++(0,-1) -- ++(-0.25,-0.25);
        }
        \draw[fill=gray!40] (0,1) -- ++(-0.5,-0.5) -- ++(0,-1) -- ++(0.5,0.5) -- ++(0,1);
        \draw[fill=gray!40] (-0.5,0.5) -- ++(-1,0) -- ++(0,-1) -- ++(1,0) -- ++(0,1);

        \foreach \x/\y in {-0.5/-0.5,-1/-1} {
          \draw[fill=gray!40] (\x,\y) -- ++(-1,0) -- ++(-0.5,-0.5) -- ++(1,0) -- ++(0.5,0.5);
        }
        \draw[fill=gray!40] (-1.75,-1.75) -- ++(0.25,0.25) -- ++(-1,0) -- ++(-0.25,-0.25);
        \draw[fill=gray!40] (0,1.5) -- (0,1) -- (-0.5,0.5) -- (-0.5,1);
        \draw[fill=gray!40] (-0.5,1) -- (-0.5,0.5) -- (-1.5,0.5) -- (-1.5,1);
        \draw (3,0.5) -- (0.5,0.5) -- (0,0);
        \foreach \x in {1,2} {
          \draw (\x+0.5,0.5) -- (\x,0) -- (\x,-3);
        };
        \foreach \y in {0,-1,-2,-3} {
          \draw (0,\y) -- ++(2.5,0);
        }
        \draw (0.5,0.5) -- ++(0,1.5);
        \draw (0,1) -- ++(0.5,0.5);
        \node at (-2.4,-4.85) {$z^{d^{\square}_{a+1}}$};
        \node at (-2.4,-6.85) {$z^{d^{\square'}_a}$};
        \node at (3.6,-1) {$z^{d^{\square}_{a+1}-2}$};
        \node at (3.6,-2) {$z^{d^{\square}_{a+1}-1}$};
        \node at (3.6,-3) {$z^{d^{\square}_{a+1}}$};
      \end{tikzpicture}
      \caption{Multiplication by $y_a$}
      \label{fig:bs-local-model-adjacent-inequalities-up}
    \end{subfigure}
    \caption{Portions of local models associated to adjacent columns}
    \label{fig:bs-local-model-adjacent-inequalities}
  \end{figure}
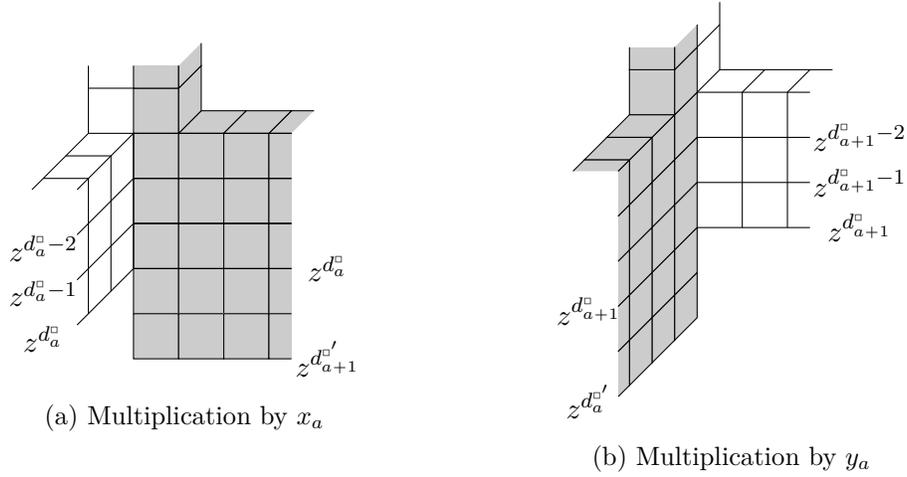
  
  Equivalently, using the notation of
  section~\ref{sec:bs-boxes-rods-assembly}, the inequality follows
  from the non-zero $\cO_Y$-module maps of $\cF^z$ between adjacent
  columns. These multiplication-by-$x_a$ or $y_a$ maps are non-zero
  because they are non-zero in $L_a$, and all boxes in $\cF^z$
  eventually generate $L_a$.

  Finally, consider $\square$ and $\square' \coloneqq xy \cdot
  \square$. This is the only remaining case where the corresponding
  hooks have adjacent squares in $V^{(a)}$. Again, non-zero
  $\cO_Y$-module maps in $\cF^z$ yield the inequalities
  \[ d^{\square'}_0 \ge d^{\square}_{m-1}, d^{\square}_1. \]
  (When $a = 0$ or $a = m-1$, only the appropriate one applies.) It
  remains to explain the inequalities
  \begin{subequations}
    \begin{align}
      d^{\square'}_b &\ge d^{\square}_{b-1} \quad \forall\; a < b < m-1 \label{eq:quasimap-diagonal-inequality-right} \\
      d^{\square'}_b &\ge d^{\square}_{b+1} \quad \forall\; 0 < b \le a. \label{eq:quasimap-diagonal-inequality-up}
    \end{align}
  \end{subequations}
  These arise from the behavior of standard rods in the $\cO \oplus
  \cO(-2)$ geometry. Namely, if both the boxes $\cube \coloneqq z^k
  \cdot \square$ and $xy \cdot \cube$ generate standard rightward
  rods $\cR$ and $\cR'$ of length $\ell$ and $\ell'$, it must be that
  $\ell' > \ell$. This is because the right-most box of $\cR$, of
  weight $y_{a+\ell} \cdot \cube$, must generate in the $x_{a+\ell}$
  direction another box of weight
  \[ x_{a+\ell} y_{a+\ell} \cdot \cube  = xy \cdot \cube \]
  which must belong to $\cR'$. If $\cR'$ were of length $\ell$, its
  right-most box would be $y_{a+\ell} xy \cdot \cube$, a
  contradiction. This is shown in
  Figure~\ref{fig:bs-local-model-diagonal-inequalities-right}, where
  $\cR'$ is in gray. Hence $\ell' > \ell$, which yields
  \eqref{eq:quasimap-diagonal-inequality-right}. The analogous
  argument holds for standard leftward rods, shown in
  Figure~\ref{fig:bs-local-model-diagonal-inequalities-up}, and yields
  \eqref{eq:quasimap-diagonal-inequality-up}.

  \begin{figure}[h]
    \centering
    \begin{subfigure}[t]{0.45\textwidth}
      \centering
      \begin{tikzpicture}[scale=0.6,xscale=-1]
        \foreach \x/\y in {-1/0, 0/0, 1/0, 2/0, 3/0, -1.5/-0.5, -2/-1} {
          \draw[fill=gray!40] (\x,\y) -- ++(1,0) -- ++(0.5,0.5) -- ++(-1,0) -- ++(-0.5,-0.5);
        }
        \foreach \x/\y in {-1/0,0/0,1/0,2/0,3/0} {
          \draw (\x+0.5,\y+0.5) -- ++(1,0) -- ++(0.5,0.5) -- ++(-1,0) -- ++(-0.5,-0.5);
        }
        \foreach \x in {0,1,...,3} {
          \draw[fill=gray!40] (\x,0) -- ++(1,0) -- ++(0,-1) -- ++(-1,0) -- ++(0,1);
        }
        \foreach \x/\y in {-1/-1,-0.5/-0.5} {
          \draw[fill=gray!40] (\x,\y) -- ++(0,-1) -- ++(0.5,0.5) -- ++(0,1) -- ++(-0.5,-0.5);
        }
        \draw[fill=gray!40] (-2.25,-1.25) -- ++(0.25,0.25) -- ++(1,0) -- ++(-0.25,-0.25);
        \draw[fill=gray!40] (-1.25,-1.25) -- ++(0.25,0.25) -- ++(0,-1) -- ++(-0.25,-0.25);
        \draw[fill=gray!40] (4.5,0) -- ++(-0.5,0) -- ++(0,-1) -- ++(0.5,0);
        \draw[fill=gray!40] (4.5,0) -- ++(-0.5,0) -- ++(0.5,0.5) -- ++(0.5,0);
        \draw (5,0.5) -- ++(-0.5,0) -- ++(0.5,0.5) -- ++(0.5,0);
      \end{tikzpicture}
      \caption{Multiplication by $x_b$}
      \label{fig:bs-local-model-diagonal-inequalities-right}
    \end{subfigure}%
    ~
    \begin{subfigure}[t]{0.45\textwidth}
      \centering
      \begin{tikzpicture}[scale=0.6]
        \foreach \x/\y in {-1/0, 0/0, 1/0, 2/0, 3/0, -1.5/-0.5, -2/-1} {
          \draw[fill=gray!40] (\x,\y) -- ++(1,0) -- ++(0.5,0.5) -- ++(-1,0) -- ++(-0.5,-0.5);
        }
        \foreach \x/\y in {-1/0,0/0,1/0,2/0,3/0} {
          \draw (\x+0.5,\y+0.5) -- ++(1,0) -- ++(0.5,0.5) -- ++(-1,0) -- ++(-0.5,-0.5);
        }
        \foreach \x in {0,1,...,3} {
          \draw[fill=gray!40] (\x,0) -- ++(1,0) -- ++(0,-1) -- ++(-1,0) -- ++(0,1);
        }
        \foreach \x/\y in {-1/-1,-0.5/-0.5} {
          \draw[fill=gray!40] (\x,\y) -- ++(0,-1) -- ++(0.5,0.5) -- ++(0,1) -- ++(-0.5,-0.5);
        }
        \draw[fill=gray!40] (-2.25,-1.25) -- ++(0.25,0.25) -- ++(1,0) -- ++(-0.25,-0.25);
        \draw[fill=gray!40] (-1.25,-1.25) -- ++(0.25,0.25) -- ++(0,-1) -- ++(-0.25,-0.25);
        \draw[fill=gray!40] (4.5,0) -- ++(-0.5,0) -- ++(0,-1) -- ++(0.5,0);
        \draw[fill=gray!40] (4.5,0) -- ++(-0.5,0) -- ++(0.5,0.5) -- ++(0.5,0);
        \draw (5,0.5) -- ++(-0.5,0) -- ++(0.5,0.5) -- ++(0.5,0);
      \end{tikzpicture}
      \caption{Multiplication by $y_b$}
      \label{fig:bs-local-model-diagonal-inequalities-up}
    \end{subfigure}
    \caption{Portions of adjacent local models}
    \label{fig:bs-local-model-diagonal-inequalities}
  \end{figure}
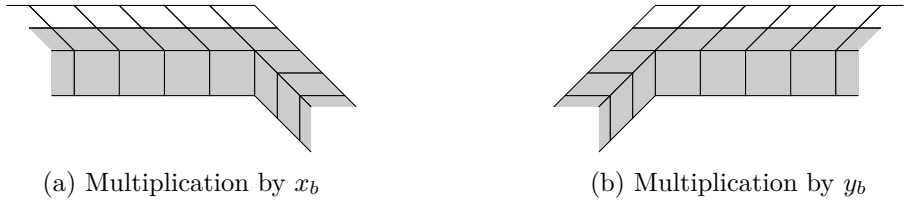

  We have just shown that every $\cO_Y$-module $\cF^{(a)}$ with one
  non-trivial external leg formed from local models yield quasimap
  degrees $\{d_\vsquare\}_{\vsquare \in V^{(a)}}$ satisfying
  \eqref{eq:quasimap-degree-condition} by verifying the necessary
  inequalities. Conversely, it is clear that these inequalities are
  sufficient to create a valid $\cO_Y$-module from local models.
\end{proof}

\subsubsection{}

Within $\cF = \tilde\Phi_{\sT}(\iota_*\cV)$ must be a unique subsheaf
$\cO_C$ which is the structure sheaf of a CM curve containing the
infinite legs $L_a$. To finish the proof of
Proposition~\ref{prop:bs-quasimap-stability-correspondence}, we need
to identify the conditions on $\cV$ in order for there to be an
inclusion $\cO_C \hookrightarrow \cF$. 

\begin{lemma} \label{lem:bs-quasimaps-framing}
  Let $\cF = \tilde\Phi_{\sT}(\iota_*\cV)$ be an $\cO_Y$-module. Then
  $\cF$ admits an inclusion $\cO_C \hookrightarrow \cF$ iff $\cV$
  admits a framing $\cW \hookrightarrow \cV$.
\end{lemma}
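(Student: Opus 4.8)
The plan is to transport the framing morphism across the equivalence $\tilde\Phi_{\sT}$ and recognize it there as the section defining the $\pi$-stable pair. Recall that $\tilde\Phi_{\sT}(\cO_X) = \cO_Y$ and, by Lemma~\ref{lem:bs-quasimap-correspondence-is-mckay}, that $\tilde\Phi_{\sT}(\iota_*\cV) = \cF$. Because $\cO_X$ is free, a map $\cO_X \to \iota_*\cV$ in $\cat{Coh}_{\sT}(X)$ is determined by the image of $1$, which lands in the color-$0$ summand $\cV_0$; this is exactly the datum of a framing $\cW \to \cV$. First I would make this dictionary precise, identifying the set of framings with $\Hom_X(\cO_X, \iota_*\cV)$. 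Applying the equivalence then yields
\[ \Hom_X(\cO_X, \iota_*\cV) \cong \Hom_Y(\cO_Y, \cF), \]
under which a framing corresponds to a candidate section $s \colon \cO_Y \to \cF$; this bijection is the backbone of the argument.

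It then remains to match the validity conditions on corresponding morphisms. On the quasimap side, Proposition~\ref{prop:quasimap-as-framed-quiver-bundle} requires the framing $\cW \hookrightarrow \cV$ to be an injection of flags; using the flag picture \eqref{eq:quasimaps-as-flags}, this says the cyclic vector, together with its positive $z$-powers and the quiver maps, generates the stable representation $V$ in every sufficiently large degree. On the BS side, Proposition~\ref{prop:bs-local-model-converse} requires $\im(s) = \cO_C$ to be the structure sheaf of the Cohen--Macaulay curve carrying all external legs $\vec\lambda$. The second step is to show these coincide: at the level of box/rod combinatorics (Lemmas~\ref{lem:mckay-Phi-skyscrapers} and \ref{lem:bs-quasimap-correspondence-is-mckay}), the legs generated by the cyclic vector match the external legs $L_a$ of $C$ together with the internal rods joining them, while the unframed data corresponds to the quotient $\cQ$. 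Examples~\ref{ex:bs-nontrivial-fixed-loci} and \ref{ex:quasimaps-nontrivial-fixed-loci}, whose matching $\bP^1$'s I would use as a sanity check, are the prototypical instance.

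The hard part will be that $\tilde\Phi_{\sT}$ is only a derived equivalence: by Lemma~\ref{lem:mckay-Phi-skyscrapers} it shifts skyscrapers and is not t-exact, so it need not send an injection of sheaves to an injection of sheaves, and one cannot naively compare the sub-objects $\cW \subset \cV$ and $\cO_C \subset \cF$. Rather than comparing objects, I would track the single morphism: because $\cO_X$, $\iota_*\cV$, $\cO_Y$ and $\cF$ all lie in cohomological degree $0$, the $\Hom$-space bijection carries the framing map to $s$ directly, and $\im(s) \subseteq \cF$ is then computed honestly in the abelian category on $Y$. The remaining point is to certify that $\im(s)$ is the pure curve $\cO_C$ precisely when the framing generates the flag $\bV^\bullet$ in every large degree; this cyclicity is exactly the combinatorial requirement already verified leg-by-leg in the proof of Lemma~\ref{lem:bs-quasimap-module-condition}. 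Checking both implications then completes the proof.
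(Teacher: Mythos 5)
Your opening move---identifying candidate framings with $\Hom_X(\cO_X,\iota_*\cV)$, transporting this across $\tilde\Phi_{\sT}$ to $\Hom_Y(\cO_Y,\cF)$ using $\tilde\Phi_{\sT}(\cO_X)=\cO_Y$, and then tracking the single morphism $s$ so that $\im(s)$ is computed honestly in $\cat{Coh}(Y)$---is sound, and is a reasonable scaffold for the proof. But there is a genuine gap at exactly the step that carries all the content. You assert that ``$\im(s)$ is the pure curve $\cO_C$ precisely when the framing generates the flag $\bV^\bullet$ in every large degree,'' and that this ``is exactly the combinatorial requirement already verified in the proof of Lemma~\ref{lem:bs-quasimap-module-condition}.'' Neither half of this is correct. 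First, the condition in Proposition~\ref{prop:quasimap-as-framed-quiver-bundle} is not a cyclicity or generation condition: each $\bV[n]$ is automatically a subrepresentation of the stable representation $V$, and the only thing that can fail is that the distinguished framing vector $i(1)=(1,1,\ldots,1)\in (V_0)_0$ does not lie in the degree-zero piece $\bV[0]$ of the flag. Second, Lemma~\ref{lem:bs-quasimap-module-condition} verifies a logically independent statement: it characterizes when the degrees $\{d_\vsquare\}$ assemble into a valid $\cO_Y$-module (the inequalities \eqref{eq:quasimap-degree-condition}), and says nothing about framings, sections, or when a subsheaf of $\cF$ is the structure sheaf of a CM curve. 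Indeed, the hypothesis of the present lemma already assumes $\cF$ is a valid $\cO_Y$-module, so whatever remains to be checked cannot be contained in that earlier lemma.

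What is actually needed---and what the paper's proof supplies---is an analysis of the gluing data at the corner boxes. The paper forms the subsheaf $\cF_C \subset \cF/\inner{xy,z}\cF$ generated by the boxes $(0,0,0)\in U_a$, and observes that $\cO_C\hookrightarrow\cF$ exists iff $\cF_C$ is a direct sum of trivially linearized sheaves $\cO_{E_{ab}}$ on connected components of $\supp\cF_C$. Whether the leftward and rightward standard rods emitted by corner boxes actually glue into such $\cO_{E_{ab}}$'s is governed by extension parameters as in \eqref{eq:bs-quasimap-quiver-maps}: they glue iff $[a:b]=[1:1]$, and in general iff the sub-flag $\bV_C^\bullet$ of $\bV^\bullet/\inner{x,y}\bV^\bullet$ preserves the inclusion of the diagonal vector $(1,\ldots,1)$, which is precisely the existence of a framing. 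This equivalence between ``all gluing parameters equal $1$'' on the BS side and ``the flag contains the diagonal vector in degree $0$'' on the quasimap side is the heart of the lemma; it lives in the module structure (extension data), not in the degree combinatorics, and it does not follow from anything proved earlier in the paper. Until you carry out this corner-box computation, both implications in your final sentence remain unproven.
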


\begin{proof}
  Consider the subsheaf $\cF_C \subset \cF/\inner{xy, z}\cF$ generated
  by boxes $\cube = (0,0,0) \in U_a$ whenever they exist. Then (the
  pre-image of) $\cF_C$ must generate $\cO_C \subset \cF$. Hence $\cF$
  has a subsheaf of the form $\cO_C$ iff $\cF_C$ is the direct sum of
  $\cO_{E_{ab}}$ (with the trivial linearization) where $E_{ab}
  \coloneqq E_a \cup E_{a+1} \cdots \cup E_b$ are connected components
  of $\supp \cF_C$. In other words, $\cF_C$ is built from rods of
  degree $(\ldots, -\infty, 0, 0, \ldots, 0, -\infty, \ldots)$; these
  are internal legs.

  In the corresponding quasimap $\cV$, the standard rods in $\cF_C$
  are specified by the hooks $\hk^\square$ for $\square = (0,0) \in
  \lambda_a$. Note that in \eqref{eq:bs-quasimap-quiver-maps}, the
  leftward and rightward standard rods can attach to form $\cO_E$ iff
  the maps $a$ and $b$ satisfy $[a : b] = [1 : 1] \in \bP^1$. This is
  also discussed in Example~\ref{ex:bs-nontrivial-fixed-loci}. The
  generalization to $\cA_{m-1}$ is as follows. Let $\bV_C[n]$ be the
  vector space at $\square = (0,0)$ in $\bV[n]$, and consider the
  sub-flag
  \[ \bV_C[0] \subset \bV_C[1] \subset \cdots \subset \bV_C[\infty] \]
  of the flag $\bV^\bullet/\inner{x,y}\bV^\bullet$. This is the
  quasimap analogue of $\cF_C$. Then the leftward and rightward
  standard rods generated by boxes $\cube = (0,0,0) = U_a$ are
  attached to form $\cO_{E_{ab}}$ iff this flag preserves the
  inclusion of the vector $(1, 1, \ldots, 1) \in \bV_C[\infty]$. But
  this is precisely the data of a framing $\cW \hookrightarrow \cV$.
\end{proof}

If $\dim \bV_C[\infty] = n$, then $\cF$ has $n$ non-trivial (external)
legs given by $n$ different hooks. The basic idea is that as we
descend in the sub-flag $\bV_C^\bullet$, standard rods in two hooks
$\hk^{(0,0)}_a$ and $\hk^{(0,0)}_b$ ``glue'' via a non-trivial step in
the flag induced by
\[ \bC \xhookrightarrow{1 \mapsto \begin{psmallmatrix} 1 \\ 1 \end{psmallmatrix}} \bC_a \oplus \bC_b \]
where $\bC_a$ and $\bC_b$ are the subspaces in $\bV_C[\infty]$
corresponding to $\square = (0,0)$ in the two hooks. So $\dim
\bV_C[\infty] - \dim \bV_C[0]$ is the number of exceptional curves
$E_a$ in $\supp \cO_C$.

\subsubsection{}

Although the equality of BS and quasimap vertices only requires
matching stability conditions for $\sT$-fixed loci, it is in fact true
that stability conditions match {\it globally} for the entire moduli
spaces of quasimaps and $\pi$-stable pairs. The following proof was
suggested by A. Okounkov.

\begin{proposition} \label{prop:bs-qmaps-moduli-isomorphism}
  There is a $\sT$-equivariant isomorphism
  \[ \PiPairs_{\text{nonsing }\infty}(S \times \bP^1) \simeq \QMaps_{\text{nonsing }\infty}(\Hilb(S)). \]
\end{proposition}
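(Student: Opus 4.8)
The plan is to upgrade the fixed-point isomorphism already constructed into an isomorphism of moduli functors, using the relative derived McKay equivalence $\tilde\Phi_{\sT}$ as a transformation that operates in arbitrary families. Both sides are moduli of two-term complexes. On the quasimap side, a stable quasimap nonsingular at $\infty$ is, by Proposition~\ref{prop:quasimap-as-framed-quiver-bundle}, a quiver bundle $\cV$ on $\bP^1$ (equivalently, via McKay, a $\sT$-equivariant $\Gamma$-equivariant sheaf on $X = \bC^2 \times \bP^1$ flat over $\bP^1$) together with a framing $\cW \hookrightarrow \cV$, which I package as the complex $I_{\cV} = [\cO_X \to \iota_*\cV]$; on the BS side a $\pi$-stable pair is $I^\bullet = [\cO_Y \to \cF]$. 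Since $\tilde\Phi_{\sT}$ is a Fourier--Mukai transform (Theorem~\ref{thm:bkr}, applied relatively over $\bP^1$), it is exact and commutes with base change, so it and its inverse $\tilde\Psi_{\sT}$ induce morphisms of the two moduli functors on any test base $T$. The candidate isomorphism sends a family $I_{\cV}$ to $\tilde\Phi_{\sT}(I_{\cV})$ and back via $\tilde\Psi_{\sT}$; these are mutually inverse because $\tilde\Phi_{\sT}$ and $\tilde\Psi_{\sT}$ are, so the entire content is to verify that each functor lands inside the correct moduli problem.

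This reduces to two checks. First, \emph{heart preservation}: that $\tilde\Phi_{\sT}(\iota_*\cV)$ is an honest sheaf $\cF$ on $Y$ (not a genuine complex) fitting into $[\cO_Y \to \cF]$, and conversely that $\tilde\Psi_{\sT}$ carries $[\cO_Y \to \cF]$ to $[\cO_X \to \iota_*\cV]$ for a genuine quiver bundle $\cV$. Second, \emph{stability matching}: that $\cF \in \cat{F} = \cat{T}^\perp$ with $\coker \in \cat{T}$ if and only if $\cV$ is a stable quasimap admitting a framing. For the second point I would argue that both conditions are open and may be tested on the geometric fibers of $X \times_{\bP^1}(\bP^1 \times T)$ and $Y \times_{\bP^1}(\bP^1 \times T)$; on each fiber the statement is precisely the surface-level correspondence controlled by the torsion pair $(\cat{T},\cat{F})$ and its combinatorial characterization in Lemmas~\ref{lem:bs-rod-degree-lower-bound} and \ref{lem:bs-rod-degree-upper-bound}, which was matched to quiver stability in Proposition~\ref{prop:bs-quasimap-stability-correspondence} (via Lemmas~\ref{lem:bs-quasimap-module-condition} and \ref{lem:bs-quasimaps-framing} and Proposition~\ref{prop:bs-local-model-converse}). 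The evaluation maps agree because $\tilde\Phi_{\sT}$ restricted to the fiber over $\infty$ is the surface McKay equivalence, identifying $\cF|_\infty$, a length-$n$ subscheme of $D = S \times \{\infty\}$, with $f(\infty) \in \Hilb(S)$; and $\sT$-equivariance is built into $\tilde\Phi_{\sT}$ by construction.

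The main obstacle I expect is the first point, heart preservation in families. At $\sT$-fixed points this is the explicit computation of Lemma~\ref{lem:mckay-Phi-skyscrapers} combined with the local-model description, but for arbitrary families one must know that the relative transform is \emph{t-exact} on the relevant subcategories, i.e. that it sends the heart of ``framed quiver-bundle data on $X$'' to ``$\pi$-stable-pair complexes on $Y$'' without creating spurious cohomology sheaves. My strategy here is again to reduce fiberwise over $\bP^1$ to the surface statement that $\Phi_{\sT}$ takes the $\Gamma$-equivariant sheaves cut out by the chamber $C_-(m)$ (equivalently, by quiver stability) to sheaves on $\cA_{m-1}$ concentrated in a single cohomological degree, using the explicit action of $\Phi_{\sT}$ on generators and the torsion-pair characterization to rule out higher cohomology. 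The genuine subtlety is that $\tilde\Phi_{\sT}$ is only t-exact on this subcategory and not on all of $D^b\cat{Coh}_{\sT}(X)$, so one must check that the quasimap and $\pi$-stability conditions carve out exactly a subcategory stable under the transform; the openness and fiberwise reductions are what let me bootstrap this global claim from the already-established surface and fixed-point statements.
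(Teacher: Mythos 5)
Your overall skeleton---transport families through the relative McKay functors $\tilde\Phi_{\sT}$, $\tilde\Psi_{\sT}$ and reduce to showing the image lands in the correct moduli problem (sheafness plus stability)---is the same as the paper's, but your mechanism for verifying those two conditions away from the torus-fixed locus has a genuine gap. Every result you invoke at that step (Lemmas~\ref{lem:bs-rod-degree-lower-bound}, \ref{lem:bs-rod-degree-upper-bound}, \ref{lem:bs-quasimap-module-condition}, \ref{lem:bs-quasimaps-framing}, Proposition~\ref{prop:bs-quasimap-stability-correspondence}) is a statement about $\sT$-fixed objects: rods, box configurations, and degree labels are features of equivariant data, and a general geometric fiber of a family over a test scheme carries no such description, so ``testing on geometric fibers'' never returns you to the setting where those lemmas apply. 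The same issue breaks heart preservation: Lemma~\ref{lem:mckay-Phi-skyscrapers} computes $\Phi_{\sT}$ only on equivariant skyscrapers, and since $\Phi_{\sT}(\iota_*x^0) = \cO_E(-2)[1]$ and $\Phi_{\sT}(\iota_*x^k) = \cO_{E_k}(-1)$, $k \neq 0$, sit in \emph{different} cohomological degrees, an arbitrary iterated extension of skyscrapers can perfectly well be sent to a genuine two-term complex; no fiberwise appeal to ``generators'' rules this out for a non-fixed sheaf. So your bootstrap is circular exactly where new input is needed: you correctly isolate that $\tilde\Phi_{\sT}$ is t-exact only on a subcategory carved out by stability, but the proposed proof of that fact quietly reuses the fixed-point results outside their range of validity.

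The missing idea---and how the paper closes this gap---is to use the $\sT$-action globally rather than fiberwise. Both moduli spaces are unions of attracting subschemes for the components of their $\sT$-fixed loci, so any nonempty closed $\sT$-invariant subset contains a limit $\lim_{t \to 0} t \cdot x$, which is a fixed point. The two conditions in question are open and $\sT$-invariant, so one argues: (i) the locus where $\tilde\Psi_{\sT}$ of the universal $\pi$-stable pair fails to be concentrated in a single cohomological degree is closed (upper semi-continuity of sheaf cohomology) and $\sT$-invariant, and it misses the fixed locus by the explicit fixed-point computations, hence is empty; (ii) the locus where the resulting object is an unstable quasimap is likewise closed and $\sT$-invariant and misses the fixed locus by Proposition~\ref{prop:bs-quasimap-stability-correspondence}, hence is empty (the fibers being supported at $0 \in \bC^2$ is automatic, since $\Psi_{\sT}$ takes sheaves supported on $E$ to sheaves supported at the origin). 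Running the symmetric argument for $\tilde\Phi_{\sT}$ produces the mutually inverse morphism. This Bia{\l}ynicki-Birula-style propagation from the fixed locus is what makes the purely combinatorial fixed-point results---the only things actually proved---suffice for the entire moduli space, and it is the step your proposal needs but does not supply.
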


\begin{proof}
  Let $Y = S \times \bP^1$ and $X = \bC^2 \times \bP^1$. Consider the
  image
  \[ \sP \coloneqq \tilde\Psi_{\sT}(\PiPairs_{\text{nonsing }\infty}(Y)) \subset D^b\cat{Coh}_{\sT}(X). \]
  The $\sT$-action on both moduli spaces means they are unions of
  attracting subschemes for components of their $\sT$-fixed loci.
  Since $\tilde\Psi_{\sT}$ is an isomorphism on the $\sT$-fixed locus:
  \begin{itemize}
  \item $\sP$ contains the $\sT$-fixed locus $\QMaps_{\text{nonsing
    }\infty}(\Hilb(S))^{\sT}$;
  \item by the upper semi-continuity of sheaf cohomology, elements of
    $\sP$ continue to be supported in a single cohomological degree
    away from $\sT$-fixed loci as well, so $\sP \subset
    \cat{Coh}_{\sT}(X)$.
  \end{itemize}
  In addition, the equivalence $\Psi_{\sT}$ sends sheaves supported on
  the exceptional divisor $E \subset \cA_{m-1}$ to sheaves supported
  at $0 \in \bC^2$, so fibers over $\bP^1$ of elements in $\sP$ are
  supported at $0 \in \bC^2$. Therefore elements of $\sP$ are (not
  necessarily stable) quasimaps, mapping at $\infty \in \bP^1$ to the
  same point as for the original $\pi$-stable pair. But stability is
  an open and $\sT$-invariant condition, and therefore the
  destabilizing locus is contained in the $\sT$-fixed locus. We
  already know the $\sT$-fixed locus of $\sP$ is stable. Hence $\sP
  \subset \QMaps_{\text{nonsing }\infty}(\Hilb(S))$. An analogous
  argument for $\tilde\Phi_{\sT}$ shows it is an isomorphism.
\end{proof}

\section{3d mirror symmetry}

\subsection{Via quasimap theory}
\label{sec:3d-mirror-symmetry}

\subsubsection{}

The (classical and quantum) geometry of certain Nakajima quiver
varieties $X$ is intimately related to the geometry of a {\it mirror}
Nakajima quiver variety $\check X$. In physics, this relation comes
from {\it 3d mirror symmetry}, also called {\it symplectic duality},
which is an S-duality between certain 3d $\cN = 4$ supersymmetric
gauge theories. In our setting, these gauge theories are associated to
a quiver and its 3d mirror, which can be constructed following
\cite[Section 3.3]{Boer1997}. Specifically, affine type A quivers are
3d mirror to other affine type A quivers, but dimension vectors for
their associated Nakajima quiver varieties are exchanged as in
Figure~\ref{fig:3d-mirror-symmetry}. We say the affine type A quiver
is (conjecturally) {\it self-mirror}.

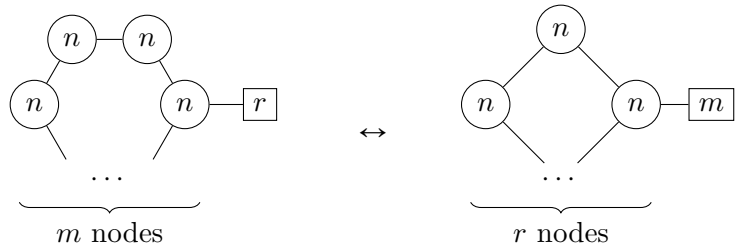
\begin{figure}[h]
  \centering
  \begin{tikzpicture}
    \begin{scope}
      \node[draw] (w) at (2,0) {$r$};
      \foreach \i in {0,1,2,3} {
        \node[draw,circle] (v\i) at (60*\i:1) {$n$};
      }
      \node (v4) at (60*4:1) {};
      \node (v5) at (60*5:1) {};
      \draw (w) to (v0);
      \foreach \i in {0,1,2,3,5} {
        \pgfmathtruncatemacro{\ii}{int(mod(\i+1,6))}
        \draw (v\i) to (v\ii);
      }
      \node at (60*4.5:1) {$\cdots$};
      \draw[decorate,decoration={brace,amplitude=4,mirror}] (-1.2,-1.3) -- node[label=below:$m\text{ nodes}$]{} (1.2,-1.3);
    \end{scope}
    \node at (3.5,-0.4) {$\leftrightarrow$};
    \begin{scope}[xshift=6cm]
      \node[draw] (w) at (2,0) {$m$};
      \foreach \i in {0,1,2} {
        \node[draw,circle] (v\i) at (90*\i:1) {$n$};
      }
      \node (v3) at (90*3:1) {$\cdots$};
      \draw (w) to (v0);
      \foreach \i in {0,1,2,3} {
        \pgfmathtruncatemacro{\ii}{int(mod(\i+1,4))}
        \draw (v\i) to (v\ii);
      }
      \draw[decorate,decoration={brace,amplitude=4,mirror}] (-1.2,-1.3) -- node[label=below:$r\text{ nodes}$]{} (1.2,-1.3);
    \end{scope}
  \end{tikzpicture}
  \caption{3d mirror symmetry of affine type A quivers}
  \label{fig:3d-mirror-symmetry}
\end{figure}

To identify the resulting Nakajima quiver varieties for the two
chambers $C_+$ and $C_-(m)$ that we are interested in, one can extend
the isomorphisms of Theorems~\ref{thm:Hilb-orbifold-stability-chamber}
and \ref{thm:Hilb-regular-stability-chamber} to isomorphisms
\begin{align*}
  \cM_{\vec\theta \in C_+}(n\vec v^0, r\vec w^0) &\cong \cM_r^n([\bC^2/\Gamma])\\ 
  \cM_{\vec\theta \in C_-(m)}(n\vec v^0, r\vec w^0) &\cong \cM_r^n(S)
\end{align*}
where $\cM_r^n(S)$ is the moduli of rank-$r$ instantons on $S$ of
instanton number $n$. To be precise, an instanton on the surface $S$
is a torsion-free sheaf $\cE$ on the compactified resolution $\bar S
\to \bP^2/\Gamma$ along with a choice of framing $\cE\big|_{\bP} \cong
\cO_{\bP}^{\oplus r}$ on the line at infinity $\bP \subset \bP^2$. The
same applies to $\cM_r^n([\bC^2/\Gamma])$. Note that $\cM_1^n(-) =
\Hilb^n(-)$.

\subsubsection{}

We will now describe some standard mathematical expectations
\ref{item:3d-mirror-assumption-1}, \ref{item:3d-mirror-assumption-2},
and \ref{item:3d-mirror-assumption-3} for what 3d mirror symmetry
entails (cf. \cite[Section 1.6]{Dinkins2019a}, \cite[Definition
  1.1]{Rimanyi2019}). These expectations were first explicitly
described in \cite{Okounkov2018}.

Firstly, 3d mirror symmetry swaps the equivariant and K\"ahler
variables of $X$ with those of $\check X$. More precisely, let $\sK
\coloneqq \Pic(X) \otimes_{\bZ} \bC^\times$, and let $\sT = \sA \times
\bC^\times_{\hbar}$ act on $X$, where $\sA$ is the sub-torus
preserving the symplectic form. Similarly define $\check\sK$,
$\check\sT$, and $\check\sA$ for $\check X$. Then the data of 3d
mirror symmetry should include
\begin{itemize}[leftmargin=1.5cm,rightmargin=1cm,align=left,labelwidth=1.4cm,labelsep=0pt]
\item[\namedlabel{item:3d-mirror-assumption-1}{($\star$)}] an
  isomorphism
  \[ \kappa\colon \check\sA \times \check\sK \times \bC^\times_{\check\hbar} \times \bC^\times_q \xrightarrow{\sim} \sA \times \sK \times \bC^\times_{\hbar} \times \bC^\times_q \]
  which identifies $\check\sA \cong \sK$, $\check\sK \cong \sA$ and
  $\check\hbar = q/\hbar$, and a bijection of fixed points
  \[ b\colon X^{\sT} \cong (\check X)^{\check \sT}. \]
\end{itemize}
In the case of $X = \cM_r(\cA_{m-1})$, our notation for the
equivariant and K\"ahler variables will be as follows.
\begin{itemize}
\item (Equivariant variables) Let $\sT_W \subset G_W \coloneqq \prod
  \GL(W_i)$ be the {\it framing} torus, whenever the framing of the
  Nakajima quiver variety has dimension $> 1$. Let $u_1, \ldots, u_r$
  denote its weights. Then $\sT \coloneqq \sT' \oplus \sT_W$ where
  $\sT'$ is the usual torus with weights $x, y$ acting on $\cA_{m-1}$
  (from section~\ref{sec:tori-definitions}). Recall that $\hbar^{-1} =
  xy$, and let $t \coloneqq x/y$. Then the equivariant variables are
  the coordinates on $\sA$:
  \[ t, \frac{u_1}{u_2}, \frac{u_2}{u_3}, \ldots, \frac{u_{r-1}}{u_r}. \]
\item (K\"ahler variables) Recall that $\Pic(X) = \bZ \cO_{\Hilb}(1)
  \oplus \Pic(\cA_{m-1})$. In this order, denote its generators by
  \[ \fz_\delta, \fz_1, \ldots, \fz_{m-1}. \]
  These are the K\"ahler variables. In the notation of
  section~\ref{sec:quasimap-vertex}, $\fz_\delta = \fz_0 \fz_1 \cdots
  \fz_{m-1}$.
\end{itemize}
If $w$ is a variable for $X$, let $\check w$ denote the analogous
variable for $\check X$. Then the isomorphism $\kappa$ is given by
\begin{equation} \label{eq:3d-mirror-variables}
  \check\hbar \leftrightarrow q/\hbar, \qquad
  \begin{aligned}
    \check t &\leftrightarrow \fz_\delta \\
    \check \fz_\delta &\leftrightarrow t
  \end{aligned}, \qquad
  \begin{aligned}
    \check u_i/\check u_{i+1} &\leftrightarrow \fz_i \\
    \check \fz_i &\leftrightarrow u_i/u_{i+1}
  \end{aligned}.
\end{equation}
A fixed point in $X$ is an $m$-tuple of $r$-tuples of partitions
$\vec\lambda = (\lambda_{a,b})_{\substack{0 \le a < m\\0 \le b < r}}$,
which we view as an $m \times r$ matrix. Then the bijection $b$ of
fixed points is the transpose map $\vec\lambda \mapsto \vec\lambda^t$.

\subsubsection{}

Let $\bar\kappa$ denote the isomorphism $\kappa$ restricted to
$\check\sA \times \check\sK$. Then $d\bar\kappa$ induces isomorphisms
\begin{align*}
  \check \Lie \sA &\cong \Pic(X) \otimes_{\bZ} \bC \\
  \check\Pic(X) \otimes_{\bZ} \bC &\cong \Lie \sA.
\end{align*}
For Nakajima quiver varieties, $\Pic(X) \otimes_{\bZ} \bC$ is exactly
the (complexified) space of GIT stability conditions. A choice of
stability chamber $C_{\eff} \subset \Pic(X) \otimes_{\bZ} \bC$
identifies $X$ among all its flops $\cM_{\vec\theta}$, and is
equivalent to the choice of effective cone. Let $\check C_{\eff}$ be
the effective cone of the mirror $\check X$. Given $X$ with a choice
of attracting chamber $\fC \subset \Lie \sA$, 3d mirror symmetry
should give
\begin{itemize}[leftmargin=1.5cm,rightmargin=1cm,align=left,labelwidth=1.4cm,labelsep=0pt]
\item[\namedlabel{item:3d-mirror-assumption-2}{($\star\star$)}]
  identifications $\check C_{\eff} \leftrightarrow \fC$ and
  $\check \fC \leftrightarrow C_{\eff}$ via $d\bar\kappa$.
\end{itemize}

\begin{example} \label{ex:mirror-attracting-chambers}
  For the stability condition $\vec\theta = (-m+1+\epsilon, 1, 1,
  \ldots, 1)$ defining $\Hilb(\cA_{m-1})$, the corresponding point in
  $\Lie \check\sA$ is given by $t = \sum \theta_i = \epsilon$ and $u_i
  - u_j = 1$. (Here we conflate the K-theoretic weights $t, u_i \in
  \check\sA$ with their logarithms in $\Lie \check\sA$.) The chamber
  containing this point is therefore
  \[ \check\fC_- \coloneqq \{u_1 \gg u_2 \gg \cdots \gg u_m > t > 0\}. \]
  Similarly, the mirror chamber for $\Hilb([\bC^2/\Gamma])$ is
  \[ \check\fC_+ \coloneqq \{t > u_1 \gg u_2 \gg \cdots \gg u_m > 0\}. \]
\end{example}

\subsubsection{}

In an enumerative context, 3d mirror symmetry can be studied using
quasimap theory following \cite{Aganagic2016} and \cite{Okounkov2018},
using a key ingredient $\Stab_{\fC}^\Ell$ called the {\it elliptic
  stable envelope}. Since the two geometries $X$ and $\check X$ are
mirror, one expects the quasimap vertices $\sV_{\QMaps}(X)$ and
$\sV_{\QMaps}(\check X)$ to be related. The relationship is best
investigated using the $q$-difference equations
\footnote{Here, the variable $q$ is identified with the weight of the
  torus acting on the quasimap domain $C$; we called this $z$
  previously, in the context of the threefold $Y = \cA_{m-1} \times
  C$.} that they satisfy in the equivariant and K\"ahler variables,
first derived in \cite[Section 8]{Okounkov2017}. Let
$\tilde\sV_{\QMaps}$ be the normalization of $\sV_{\QMaps}$ so that
these $q$-difference equations are {\it scalar}. Whatever the correct
mathematical definition is for $X$ and $\check X$ to be 3d mirror, we
assume it requires that
\begin{itemize}[leftmargin=1.5cm,rightmargin=1cm,align=left,labelwidth=1.4cm,labelsep=0pt]
  \item[\namedlabel{item:3d-mirror-assumption-3}{($\star\star\star$)}]
    $\tilde\sV_{\QMaps}(X)$ and $\tilde\sV_{\QMaps}(\check X)$ satisfy
    the {\it same} scalar $q$-difference equations,
\end{itemize}
up to the change of variables \eqref{eq:3d-mirror-variables} which
swaps the $q$-difference operators for equivariant and K\"ahler
variables. Then $\tilde\sV(X)$ and $\tilde\sV(\check X)$ are
distinguished among all solutions to the $q$-difference equations as
the ones holomorphic in $\fz$ and $\check\fz$ respectively, in a
suitable neighborhood of $0$ in $C_{\eff}$ and $\check C_{\eff}$. One
can ask for the change of basis matrix transforming the basis of
solutions holomorphic in $\fz$ to the basis holomorphic in
$\check\fz$. The main result of \cite{Aganagic2016} is a geometric
realization of this matrix as a certain normalization
$\tilde\Stab_{\fC}^\Ell$ of the elliptic stable envelope.

\begin{theorem}[{\cite[Theorem 5]{Aganagic2016}}]
  Suppose $X$ is 3d mirror to $(\check X, \check\fC)$ in the sense
  that it satisfies \ref{item:3d-mirror-assumption-1},
  \ref{item:3d-mirror-assumption-2},
  \ref{item:3d-mirror-assumption-3}. Then
  \begin{equation} \label{eq:mirror-quasimap-vertex}
    \kappa^*\tilde\sV_{\QMaps}(X) = \tilde\Stab_{\check\fC}^{\Ell}\tilde\sV_{\QMaps}(\check X)
  \end{equation}
  where $\kappa^*$ denotes the change of variables
  \eqref{eq:3d-mirror-variables}.
\end{theorem}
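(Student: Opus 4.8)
The plan is to recast the identity as a statement about the connection matrix of a holonomic system of $q$-difference equations, and then to pin that matrix down using the axiomatic characterization of the elliptic stable envelope. First I would treat $\tilde\sV_{\QMaps}(X)$ and $\tilde\sV_{\QMaps}(\check X)$ not as single series but as \emph{fundamental solution matrices}: their columns are indexed by the torus-fixed points according to where $\ev_\infty$ lands, and by assumption~\ref{item:3d-mirror-assumption-3} each column solves the scalar $q$-difference system attached to $X$ (resp.\ $\check X$), the full set of columns forming a basis of solutions. The change of variables $\kappa$ identifies these two systems, swapping the difference operators in the equivariant and Kähler variables, so after applying $\kappa^*$ both $\kappa^*\tilde\sV_{\QMaps}(X)$ and $\tilde\sV_{\QMaps}(\check X)$ solve \emph{the same} system in \emph{the same} variables. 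Two fundamental solution matrices of one holonomic system differ by a matrix factor $M$ annihilated by the difference operators, hence invariant under the full shift lattice and therefore an elliptic (doubly quasi-periodic) function of the combined equivariant and Kähler variables. Writing
\[ \kappa^*\tilde\sV_{\QMaps}(X) = M\,\tilde\sV_{\QMaps}(\check X), \]
the theorem reduces to the identification $M = \tilde\Stab_{\check\fC}^{\Ell}$.

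To establish this, I would verify that $M$, viewed as a matrix indexed by pairs of fixed points through the bijection $b\colon X^{\sT}\cong(\check X)^{\check\sT}$, satisfies the three properties that characterize the normalized elliptic stable envelope uniquely. These are: (i) a triangularity/support condition with respect to the partial order on fixed points induced by the attracting chamber $\check\fC$; (ii) a diagonal normalization matching the restriction of $M$ along a fixed point to the prescribed product of theta-functions (the elliptic Euler class of the attracting normal directions); and (iii) a quasi-periodicity in the Kähler variables compatible with the degree shifts. Properties (i) and (ii) are precisely the data needed to fix an elliptic section uniquely, so once they are checked, the equality $M = \tilde\Stab_{\check\fC}^{\Ell}$ follows from the uniqueness of the stable envelope.

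The main obstacle is extracting (i) and (ii) from the geometry of the vertex functions, which requires a careful asymptotic analysis of $\tilde\sV_{\QMaps}(\check X)$ as the Kähler parameters $\check\fz$ degenerate toward the walls of $\check C_{\eff}$. The leading term of the vertex at a fixed point must be read off and shown to produce the correct diagonal normalization, while the subleading terms must be shown to respect the ordering coming from $\check\fC$ (equivalently, from $\fC$ via \ref{item:3d-mirror-assumption-2}). Here one uses that the tilde-normalization was chosen precisely to make the difference equations scalar, so that the leading exponential and theta factors of the two solution bases are explicit; the triangularity then amounts to showing that the transition between the basis holomorphic near $\fz=0$ and the one holomorphic near $\check\fz=0$ is governed by the attracting cells, which is where the geometric meaning of the stable envelope enters. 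Once this asymptotic matching is in place, the ellipticity of $M$ upgrades the statement from an identity of leading terms to an identity of full series, completing the proof.
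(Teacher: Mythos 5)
This theorem is not proved in the paper: it is imported verbatim from \cite[Theorem 5]{Aganagic2016}, and the surrounding text only frames it --- the normalized vertices $\tilde\sV_{\QMaps}(X)$ and $\tilde\sV_{\QMaps}(\check X)$ are the solution bases of one scalar $q$-difference system (after $\kappa^*$) distinguished by holomorphicity in $\fz$ and $\check\fz$ respectively, and the normalized elliptic stable envelope is the geometric realization of the connection matrix between these two bases. Your proposal reconstructs precisely this framing and the strategy of the cited proof (the two bases differ by an elliptic connection matrix $M$, which is then identified with $\tilde\Stab_{\check\fC}^{\Ell}$ via the uniqueness axioms of the stable envelope); just be aware that the step you defer as the ``main obstacle'' --- verifying that $M$ satisfies the triangularity, diagonal normalization, and quasi-periodicity axioms from the asymptotics of the vertex functions --- is the entire mathematical content of \cite{Aganagic2016}, so your sketch should be read as a faithful outline of the known argument rather than a self-contained proof.
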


This theorem inspires the tentative mathematical definition of 3d
mirror symmetry given in \cite[Definition 1.1]{Rimanyi2019}, where it
is shown that the {\it non-affine} type A quiver is self-mirror. Their
definition should be equivalent to our three assumptions
\ref{item:3d-mirror-assumption-1}, \ref{item:3d-mirror-assumption-2}
and \ref{item:3d-mirror-assumption-3}.

\subsubsection{}

The precise definition of $\tilde\Stab_{\fC}^\Ell$ is unimportant for
us, because we will mostly use Theorem~\ref{thm:3d-mirror-symmetry} in
the case where all equivariant variables of $X$ vanish. Namely, if
$\vec\sigma\colon \bC^\times \to \sA$ is a generic cocharacter with
$d\vec\sigma \in \fC$, then we are interested in the limit
\[ \sV_{\QMaps}^p(\vec 0_{\fC}, \vec\fz) \coloneqq \lim_{w \to 0} \sV_{\QMaps}^p(\vec\sigma(w), \vec\fz) \in \bZ(q, \hbar)[[\vec\fz^\pm]]. \]
Equivalently, all K\"ahler variables of $\check X$ vanish, and
$\tilde\sV_{\QMaps}^p(\check X)$ is truncated to its constant term. The
elliptic stable envelope becomes a diagonal matrix with very explicit
entries. The overall effect is that \eqref{eq:mirror-quasimap-vertex}
becomes very simple.

\begin{corollary} \label{thm:3d-mirror-symmetry}
  For $p \in X^{\sT}$, let
  \[ T_pX = T^{> 0,\fC}_p + T^{< 0,\fC}_p \]
  be a decomposition into attracting and repelling directions with
  respect to $\fC$. Then
  \begin{equation} \label{eq:mirror-quasimap-vertex-non-equivariant}
    \kappa^*\sV_{\QMaps}^{p}(\vec 0_{\fC}, \vec\fz) = \prod_{w \in T^{<0,\check\fC}_{b(p)}\check X} \frac{(q\check \hbar^{-1} w^{-1}; q)_\infty}{(w^{-1}; q)_\infty}
  \end{equation}
  where $(w; q)_\infty \coloneqq \prod_{n \ge 0} (1 - q^n w)$ is the
  $q$-analogue of the Gamma function.
\end{corollary}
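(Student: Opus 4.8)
The plan is to deduce the corollary directly from the full mirror-symmetry statement \eqref{eq:mirror-quasimap-vertex} by specializing it to the degenerate limit in which all equivariant variables of $X$ are sent to $\vec 0_\fC$. Extracting the $p$-component of the vector equation \eqref{eq:mirror-quasimap-vertex} gives
\[ \kappa^*\tilde\sV^p_{\QMaps}(X) = \sum_{p'} \left(\tilde\Stab_{\check\fC}^{\Ell}\right)_{b(p),p'} \tilde\sV^{p'}_{\QMaps}(\check X), \]
so the first step is to track what the limit $\vec\sigma(w) \to \vec 0_\fC$ does to each factor on the right. Under the isomorphism $\kappa$ of \ref{item:3d-mirror-assumption-1}, the equivariant variables of $X$ are identified with the K\"ahler variables of $\check X$; hence, using \ref{item:3d-mirror-assumption-2}, sending them to $\vec 0_\fC$ corresponds on the mirror side to sending the K\"ahler variables $\check\fz$ of $\check X$ to $0$ inside $\check C_{\eff}$.

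Next I would analyze the two factors separately in this limit. The normalized vertex $\tilde\sV_{\QMaps}(\check X)$ is a power series in $\check\fz$ whose constant term is the contribution of the degree-$0$ (constant) quasimap; after the scalar normalization $\tilde{(-)}$ of \ref{item:3d-mirror-assumption-3} this constant term equals $1$ at each fixed point, so as $\check\fz \to 0$ the vector $\tilde\sV_{\QMaps}(\check X)$ degenerates to the standard all-ones vector. Simultaneously, the off-diagonal entries $\left(\tilde\Stab_{\check\fC}^{\Ell}\right)_{b(p),p'}$ for $p' \neq b(p)$ carry strictly positive powers of $\check\fz$, by the triangularity and support properties of the elliptic stable envelope, and therefore vanish in the limit. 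Only the diagonal entry survives, so the right-hand side collapses to $\left(\tilde\Stab_{\check\fC}^{\Ell}\right)_{b(p),b(p)}$ evaluated at $\check\fz = 0$.

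It then remains to compute this diagonal restriction explicitly. By the normalization axiom characterizing the elliptic stable envelope, its diagonal entry at a fixed point is a product of theta factors indexed by the repelling tangent directions $T^{<0,\check\fC}_{b(p)}\check X$ with respect to $\check\fC$; in the $\check\fz \to 0$ degeneration the normalized envelope $\tilde\Stab$ reduces each such factor to the ratio $\frac{(q\check\hbar^{-1} w^{-1}; q)_\infty}{(w^{-1}; q)_\infty}$, with the shift by $q\check\hbar^{-1}$ coming from the symmetrized virtual structure sheaf $\hat\cO^{\vir}$. This yields precisely the stated product. Since the paper deliberately leaves the precise definition of $\tilde\Stab_{\check\fC}^{\Ell}$ unspecified, the cleanest route is to invoke the known diagonal-restriction formula for stable envelopes rather than recompute it from the kernel.

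The main obstacle is this last step: pinning down the diagonal restriction of the \emph{normalized} stable envelope and verifying that its degeneration matches the $q$-Pochhammer product on the nose, including the correct appearance of $q\check\hbar^{-1}$. A secondary technical point is justifying the interchange of the $\check\fz \to 0$ limit with the infinite sums defining both the vertex and the stable-envelope action; this follows from the holomorphicity of $\tilde\sV_{\QMaps}(\check X)$ in $\check\fz$ near $0$ on $\check C_{\eff}$ guaranteed by \ref{item:3d-mirror-assumption-3}, together with the vanishing of the off-diagonal entries established above.
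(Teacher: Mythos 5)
Your proposal follows essentially the same route as the paper: specialize \eqref{eq:mirror-quasimap-vertex} to the limit where the equivariant variables of $X$ vanish, observe via $\kappa$ that this sends the K\"ahler variables of $\check X$ to $0$ so that $\tilde\sV_{\QMaps}(\check X)$ truncates to its constant term and the elliptic stable envelope degenerates to a diagonal matrix, whose explicit diagonal entries give the $q$-Pochhammer product. The paper's own justification is exactly this sketch (and no more detailed), so your additional care about off-diagonal vanishing and the interchange of limits only makes the argument more complete.
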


This relation between the quasimap vertex of $X$ and the
$\check\sT$-equivariant geometry of $\check X$ was stated and checked
in \cite{Dinkins2019a, Dinkins2019} for cotangent bundles to flag
varieties and $\Hilb([\bC^2/\Gamma])$.

\subsubsection{}

The remainder of this paper will explore the consequences of 3d mirror
symmetry in the form of Corollary~\ref{thm:3d-mirror-symmetry},
between $X = \Hilb(\cA_{m-1})$ (and its flop $\Hilb([\bC^2/\Gamma])$)
and its mirror $\check X = \cM_m(\bC^2)$. Specifically, via the
BS/quasimaps correspondence, the formula
\eqref{eq:mirror-quasimap-vertex-non-equivariant} yields formulas for
certain DT/PT/BS vertices of $Y = \cA_{m-1} \times C$ that can be
explicitly checked. Alternatively, the resulting statements for these
vertices can be viewed as non-trivial evidence for the {\it affine}
type A quiver being self-mirror.

\subsection{The Calabi--Yau limit}
\label{sec:3d-mirror-symmetry-CY}

\subsubsection{}

For equivariant K-theoretic objects on a threefold $Y$, setting $xyz =
1$ is known as the {\it Calabi--Yau (CY) limit}. The evident
self-duality of the tangent-obstruction theory \eqref{eq:bs-Tvir}
ensures that in this limit, the {\it equivariant} pushforward $\chi(M,
\hat\cO_M^{\vir})$ becomes the {\it topological} Euler characteristic
$\chi(M)$ up to a sign (see \cite[Section 3.1]{Nekrasov2016} for
details). When $Y$ is a crepant resolution, it is known that
\begin{equation} \label{eq:CY-pairs-vertex}
  \sV_{\CY}^{\vec\lambda}(Y) \coloneqq \sV^{\vec\lambda}_{\PiPairs}(Y)\bigg|_{xyz=1} = \frac{\sV^{\vec\lambda}_{\Pairs}(Y)}{\sV_{\Pairs_{\text{exc}}}(Y)}\bigg|_{xyz=1} = \frac{\sV^{\vec\lambda}_{\cat{DT}}(Y)}{\sV_{\cat{DT}_{\text{exc}}}(Y)}\bigg|_{xyz=1} \in \bZ((Q))[[\vec A]]
\end{equation}
by \cite{Bryan2016} and \cite{Toda2010} respectively, i.e. the
DT/PT/BS correspondences hold.

It is possible to compute an explicit formula for
$\sV_{\CY}^{\vec\lambda}(Y)$. Such an explicit formula is known in the
physics literature under the guise of {\it geometric engineering} of
(Nekrasov partition functions for) 4d $\cN=2$ gauge theories with
prescribed matter contents. Then we can verify
\eqref{eq:mirror-quasimap-vertex-non-equivariant} (in the CY limit)
manually via the BS/quasimaps correspondence. Thus the BS/quasimaps
correspondence allows us to view this geometric engineering, in
certain cases, as a consequence of 3d mirror symmetry.

\subsubsection{}

The CY limit $xyz=1$ becomes $q/\hbar = 1$ in quasimaps language. Let
$\sV_{\QMaps,\CY}$ denote the quasimap vertex in the CY limit. On the
mirror side, the CY limit becomes $\check\hbar = 1$. Then there is
massive cancellation on the rhs of
\eqref{eq:mirror-quasimap-vertex-non-equivariant}, which yields (cf.
\cite{Okounkov2018})
\begin{equation} \label{eq:quasimap-vertex-CY-limit}
  \sV_{\QMaps,\CY}^p(X) = S^\bullet\left(T_{b(p)}^{<0,\check\fC} \check X\right)^\vee.
\end{equation}
Here $S^\bullet$ can be viewed as the symmetric algebra functor on
(virtual) vector spaces, and is also known as the {\it plethystic
  exponential}. On $K_{\sT}(\pt)$, it can be defined as
\[ S^\bullet(w) \coloneqq \frac{1}{1 - w}, \quad S^\bullet(w + w') = S^\bullet(w) S^\bullet(w') \]
where $w, w' \in K_{\sT}(\pt)$ are monomials.

\begin{proposition} \label{prop:cy-vertex-formula}
  Let $\check\fC = \{u_1 \gg u_2 \gg \cdots \gg u_m \gg t > 0\}$. Then
  \[ \sV_{\CY}^{\vec\lambda}(Y) = S^\bullet \left(T^{< 0, \check\fC}_{\vec\lambda}\cM_m(\bC^2)\right)^\vee\bigg|_{(x,y)=(Q,Q^{-1}), \, u_a/u_b=A_{ab}}. \]
\end{proposition}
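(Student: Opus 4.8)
The plan is to obtain the statement by chaining together three facts already established: the BS/quasimaps correspondence of Theorem~\ref{thm:bs-quasimap-correspondence}, the matching of the two CY limits, and the CY-limit form \eqref{eq:quasimap-vertex-CY-limit} of 3d mirror symmetry for $X=\Hilb(\cA_{m-1})$ and its mirror $\check X=\cM_m(\bC^2)$. Concretely, I would assemble the chain
\[ \sV_{\CY}^{\vec\lambda}(Y) \;=\; \sV_{\QMaps,\CY}^{\vec\lambda}\bigl(\Hilb(\cA_{m-1})\bigr) \;=\; S^\bullet\!\left(\bigl(T^{<0,\check\fC}_{b(\vec\lambda)}\cM_m(\bC^2)\bigr)^{\!\vee}\right) \]
and then unwind the composite change of variables to land on the specialization in the statement.

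For the first equality, I would use that Theorem~\ref{thm:bs-quasimap-correspondence} is proved via an isomorphism of $\sT$-fixed loci (Proposition~\ref{prop:bs-quasimap-stability-correspondence}) matching virtual tangent spaces (Proposition~\ref{prop:bs-quasimaps-Tvir-correspondence}), so it holds componentwise at the fixed point $\vec\lambda$, not merely for the total vertex. Restricting to the $\vec\lambda$-component and imposing $xyz=1$, the CY limit on $Y$ coincides with $q/\hbar=1$ in quasimap variables --- since $q=z$ and $\hbar^{-1}=xy$ --- so $\sV^{\vec\lambda}_{\PiPairs}|_{xyz=1}$ equals $\sV^{\vec\lambda}_{\QMaps}$ in its own CY limit, under the dictionary $\fz_\delta=Q$, $\fz_i=A_i$ of the correspondence. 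Together with the definition \eqref{eq:CY-pairs-vertex} of $\sV_{\CY}^{\vec\lambda}(Y)$ this gives the left equality.

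The second equality is \eqref{eq:quasimap-vertex-CY-limit}, read (as in its derivation from \eqref{eq:mirror-quasimap-vertex-non-equivariant}) with the change of variables $\kappa$ of \eqref{eq:3d-mirror-variables} applied, and with $b(\vec\lambda)=\vec\lambda^{t}$ the transpose identification of fixed points. To finish I would compose the two dictionaries: $\kappa$ sends $\check t\leftrightarrow\fz_\delta$ and $\check u_i/\check u_{i+1}\leftrightarrow\fz_i$, so substituting $\fz_\delta=Q$, $\fz_i=A_i$ and imposing the CY relation $\check\hbar=1$ (equivalently $\check x\check y=1$) forces the two $\bC^2$-weights of $\cM_m(\bC^2)$ into the reciprocal pair $(x,y)=(Q,Q^{-1})$ and the framing ratios into $u_a/u_b=\prod_{i=a}^{b-1}\fz_i=A_{ab}$, which is exactly the claimed specialization.

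I expect essentially all of the work to sit in this last, purely bookkeeping, step rather than in any new geometric input. One must compose $\kappa$ with the BS/quasimaps dictionary and the CY relation $\check x\check y=1$ consistently, carry the transpose $b(\vec\lambda)=\vec\lambda^{t}$ through, and above all absorb the half-integer normalizations carried by the symmetrized sheaf $\hat\cO^{\vir}=\cO^{\vir}\otimes\cK_{\vir}^{1/2}$; it is these normalizations that make the pair of $\bC^2$-weights appear symmetrically as $(Q,Q^{-1})$. A secondary check is that the attracting chamber $\check\fC$ produced from the effective cone $C_-(m)$ of $\Hilb(\cA_{m-1})$ via \ref{item:3d-mirror-assumption-2} is indeed the one computed in Example~\ref{ex:mirror-attracting-chambers}, since this is what fixes the set of repelling directions $T^{<0,\check\fC}$ and hence the precise shape of the right-hand side.
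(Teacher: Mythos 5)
Your chain is circular at its second step. The equality \eqref{eq:quasimap-vertex-CY-limit} that you invoke is a specialization of Corollary~\ref{thm:3d-mirror-symmetry}, which is \emph{conditional} on assumption \ref{item:3d-mirror-assumption-3}, i.e.\ on $\Hilb(\cA_{m-1})$ actually being 3d mirror to $\cM_m(\bC^2)$. The paper is explicit (section~\ref{sec:DT-CRC-full}) that this is precisely the conjectural ingredient, and that it is Proposition~\ref{prop:cy-vertex-formula} itself which \emph{verifies} this mirror statement in the CY limit, thereby making the DT CRC argument a proof. In other words, the Proposition is the independent check that supplies evidence for the affine type A quiver being self-mirror; deducing it from \eqref{eq:quasimap-vertex-CY-limit} assumes exactly what the Proposition is meant to establish. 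Your first equality --- restricting the BS/quasimaps correspondence of Theorem~\ref{thm:bs-quasimap-correspondence} to the $\vec\lambda$-component and matching CY limits --- is fine and is indeed part of the paper's framing, but it only reduces the claim to a statement about the quasimap/BS vertex; the substance of the Proposition is an unconditional computation of that vertex, which your proposal never performs (and indeed you dismiss the remaining work as ``purely bookkeeping'').

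The missing content is the direct calculation the paper carries out. Since $Y=\cA_{m-1}\times\bC$ is glued from the local CY pieces $\tot(\cO\oplus\cO(-2))$ and $\tot(\cO(-1)\oplus\cO(-1))$, the CY-limit vertex admits the closed form \eqref{eq:vertex-CY-limit} in terms of principally specialized Schur functions $s_{\lambda_a}$ and Cauchy-type factors $[\lambda_a^t\lambda_b]_{A_{ab}}$, via the topological vertex formalism of \cite{Iqbal2006} and \cite[Formula 3.23]{Okounkov2006}. One then matches these factors against the mirror side by hand: the hook-length formula gives \eqref{eq:schur-mirror-formula}, identifying $Q^{-n(\lambda)}s_\lambda$ with $S^\bullet\left(T^{<0,\check\fC}_\lambda\Hilb(\bC^2)\right)^\vee$ under $(x,y)=(Q,Q^{-1})$, and the Carlsson--Okounkov character formula for $T_{\lambda,\mu}$ gives \eqref{eq:instanton-Tab-mirror-formula} for the off-diagonal framing contributions. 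Plugging these into \eqref{eq:vertex-CY-limit}, together with the decomposition
\[ T^{< 0,\check\fC}\cM_m(\bC^2) = \sum_a T_{\lambda_a,\lambda_a}^{<0,\check\fC} + \sum_{a<b} \frac{u_b}{u_a} T_{\lambda_a,\lambda_b}, \]
yields the claimed identity up to an undetermined monomial $Q^{\cdots}$, which is then fixed by noting that both sides are $1+O(Q,\vec A)$. This computation, not a change-of-variables exercise, is the proof; it is also what permits the paper to later treat the equivariant K-theoretic DT CRC as proven in the CY limit rather than conjectural.
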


This is exactly 3d mirror symmetry in the form of
Corollary~\ref{thm:3d-mirror-symmetry} for $X = \Hilb(\cA_{m-1})$ in
the CY limit: Example~\ref{ex:mirror-attracting-chambers} shows that
$\check\fC$ is mirror to $C_{\eff}(X)$, and the change of variables is
exactly the composition of $\kappa$ with the BS/quasimaps change of
variables $Q \leftrightarrow \fz_\delta$ and $A_{ab} \leftrightarrow
\fz_{ab}$.

\subsubsection{}

The proof of Proposition~\ref{prop:cy-vertex-formula} is by direct
calculation. Whenever $Y$ is built from the local CY pieces $\tot(\cO
\oplus \cO(-2))$ and $\tot(\cO(-1) \oplus \cO(-1))$, a convenient
formalism for computing $\sV_{\CY}(Y)$ is given in \cite{Iqbal2006}.
This certainly holds for $Y = \cA_{m-1} \times \bC$. These local CY
pieces are special because it is possible to obtain a closed-form
formula for their vertices, using the formula \cite[Formula
  3.23]{Okounkov2006} for the topological vertex. Namely, let
\[ s_\lambda \coloneqq s_\lambda(Q^{-\rho}) = s_\lambda(Q^{1/2}, Q^{3/2}, Q^{5/2}, \ldots) \]
be a principal $Q$-specialization of the Schur function, and let
\[ [\lambda\mu]_A \coloneqq \prod_{i,j \ge 1} \frac{1}{1 - A Q^{-\rho_i - \lambda_i} Q^{-\rho_j - \mu_j}} \]
be the prefactor arising from applying (skew) Cauchy identities to
certain (skew) Schur functions in the topological vertex. Then, using
\cite{Iqbal2006} or otherwise,
\begin{equation} \label{eq:vertex-CY-limit}
  \sV_{\CY}^{\vec\lambda}(Y) = Q^{\cdots} \frac{\prod_a s_{\lambda_a} \prod_{a<b} [\lambda_a^t\lambda_b]_{A_{ab}}}{\prod_{a<b} [\emptyset\emptyset]_{A_{ab}}}
\end{equation}
where $A_{ab} \coloneqq A_a A_{a+1} \cdots A_{b-1}$ for $a < b$, and
$Q^{\cdots}$ denotes some monomial in $Q$ which is unimportant to us.

\subsubsection{}

To begin relating $\sV_{\CY}^{\vec\lambda}(Y)$ to
$T^{<0}_{\vec\lambda}\cM_m(\bC^2)$, we need to introduce some notation
for partitions. Recall that the {\it arm-length} and {\it leg-length}
of a square $\square \in \lambda$ are
\begin{align*}
  a_\lambda(\square) &\coloneqq \lambda_{i(\square)} - j(\square) \\
  \ell_\lambda(\square) &\coloneqq (\lambda^t)_{j(\square)} - i(\square).
\end{align*}
Let $n(\lambda) \coloneqq \sum_k (k - 1/2)\lambda_k$. Then a slight
modification of Stanley's hook length formula yields
\[ s_\lambda = \frac{Q^{n(\lambda)}}{\prod_{\square \in \lambda} (1 - Q^{a_\lambda(\square) + \ell_\lambda(\square) + 1})}. \]
This formula for $s_\lambda$ can be written in terms of the geometry
of $\Hilb(\bC^2)$ as follows. There is an explicit combinatorial
formula
\begin{equation} \label{eq:hilb-tangent-character}
  \begin{split}
    T_\lambda\Hilb(\bC^2)
    &= \chi(\cO_{\bC^2}) - \chi(I_\lambda, I_\lambda) \\
    &= \sum_{\square \in \lambda} \left(x^{-a_\lambda(\square)-1} y^{\ell_\lambda(\square)} + x^{a_\lambda(\square)} y^{-\ell_\lambda(\square)-1}\right)
  \end{split}
\end{equation}
for the character of the tangent space at a fixed point $\lambda$. If
$\check\fC = \{t > 0\}$ is the choice of attracting chamber, then
\begin{equation} \label{eq:schur-mirror-formula}
  Q^{-n(\lambda)} s_\lambda = S^\bullet \left(T_\lambda^{< 0, \check\fC}\Hilb(\bC^2)\right)^\vee\bigg|_{(x,y)=(Q,Q^{-1})}.
\end{equation}

\subsubsection{}

The generalization of \eqref{eq:hilb-tangent-character} to
$\cM_m(\bC^2)$ is
\[ T_{\vec\lambda}\cM_m(\bC^2) = \sum_{a,b} \frac{u_b}{u_a} T_{\lambda_a,\lambda_b}, \]
where $T_{\lambda,\mu} \coloneqq \chi(\cO_{\bC^2}) - \chi(I_\lambda,
I_\mu)$. It is shown in the proof of \cite[Lemma 6]{Carlsson2012} that
\begin{align*}
  T_{\lambda,\mu}
  &= \sum_{i,j \ge 1} x^{\mu_i-j} y^{-\lambda^t_j+i-1} - \sum_{i,j \ge 1} x^{-j} y^{i-1} \\
  &= \sum_{\square \in \lambda} x^{-a_\lambda(\square)-1} y^{\ell_\mu(\square)} + \sum_{\square \in \mu} x^{a_\mu(\square)} y^{-\ell_\lambda(\square)-1}.
\end{align*}
It follows almost immediately that the remaining terms in
\eqref{eq:vertex-CY-limit} are
\begin{equation} \label{eq:instanton-Tab-mirror-formula}
  \frac{[\lambda_a\lambda_b]_{A_{ab}}}{[\emptyset\emptyset]_{A_{ab}}} = S^\bullet \left(A_{ab} T_{\lambda_a,\lambda_b}^\vee\right)\bigg|_{(x,y)=(Q,Q^{-1})}.
\end{equation}

\begin{proof}[of Proposition~\ref{prop:cy-vertex-formula}]
  Note that $T_\lambda \Hilb(\bC^2) = T_{\lambda,\lambda}$ and
  \[ T^{< 0,\check\fC}\cM_m(\bC^2) = \sum_a T_{\lambda_a,\lambda_a}^{<0,\check\fC} + \sum_{a<b} \frac{u_b}{u_a} T_{\lambda_a,\lambda_b}. \]
  Hence plugging \eqref{eq:schur-mirror-formula} and
  \eqref{eq:instanton-Tab-mirror-formula} into
  \eqref{eq:vertex-CY-limit} yields the desired equality up to a
  monomial $Q^{\cdots}$. This monomial must be trivial, because both
  sides are $1 + O(Q, \vec A)$.
\end{proof}

\subsubsection{}

It is worth mentioning that even without using the BS/quasimaps
correspondence, 3d mirror symmetry in the CY limit for quasimap
vertices already has many interesting consequences. For example, the
degree configurations of $\sT$-fixed quasimaps to $\lambda \in
\Hilb([\bC^2/\Gamma])$ have been widely studied under the name of
(colored) {\it reverse plane partitions (RPPs)} of shape $\lambda$.
These are ways to label the squares in $\lambda$ with non-negative
degrees $d_\square$ such that they are non-decreasing along rows and
columns. If $\deg_k$ is the sum of $d_\square$ for all squares of
color $c(\square) = k$, then
\[ \sV_{\QMaps,\CY}^{\lambda}(\Hilb([\bC^2/\Gamma])) = \sum_{\pi \in \text{RPP}(\lambda)} \vec\fz^{\deg \pi} \]
is a generating series for colored RPPs, since all fixed loci are
isolated points. Then 3d mirror symmetry yields a closed-form formula
for this series which matches with the known result in \cite[Theorem
  5.1]{Gansner1981}. More generally, \cite{Dinkins2019} proves a
formula for $\sV_{\QMaps}^{\lambda}(\Hilb([\bC^2/\Gamma])\big|_{t=0}$
to verify Corollary~\ref{thm:3d-mirror-symmetry} in full generality,
which can be viewed as a $(q,\hbar)$-deformation of that of
\cite{Gansner1981}.

A striking feature of these and related formulas is that they are {\it
  plethystic}, in the sense that the desired generating series can be
written as $S^\bullet(\cdots)$. This is unsurprising via the CY limit
of 3d mirror symmetry, which even explicitly identifies the terms in
$\cdots$ using the equivariant geometry of the mirror. 

\subsubsection{}

For $X = \Hilb(\cA_{m-1})$, the description of $\sT$-fixed quasimaps
to $\vec\lambda \in X^{\sT}$ in section~\ref{sec:quasimaps-fixed-loci}
suggests to generalize RPPs as follows. For general $\vec\lambda$, the
components of $\QMaps^{\vec\lambda}(X)$ are not isolated points, and
it is not so clear what the enumerative significance of
$\sV_{\QMaps,\CY}^{\vec\lambda}(X)$ is. But when $\vec\lambda$ is
empty except at a single $\lambda_a \neq \emptyset$, all fixed loci
are isolated points. The resulting degree configuration is a {\it
  generalized RPP (GRPP)} in the sense that each square is still
labeled by a degree $d_{\square}$ and degrees are still non-decreasing
along rows and columns, but:
\begin{itemize}
\item the underlying shape $V(\vec\lambda)$ is a skew diagram with
  squares {\it outside} the positive quadrant in general;
\item the degrees labeling squares may be negative, with the condition
  that $d_{(0,0)} \ge 0$.
\end{itemize}
Then Proposition~\ref{prop:cy-vertex-formula} can be viewed as a
plethystic formula for the generating series of colored GRPPs of shape
$V(\vec\lambda)$, which also admits a $(q,\hbar)$-deformation.

\subsection{Crepant resolution conjecture}
\label{sec:DT-CRC}

\subsubsection{}

Let $Y \to \fY$ be a crepant resolution of a CY3 orbifold. One can
study the orbifold DT/PT theory for $\fY$ and obtain an {\it orbifold
  topological vertex} $\sV_{\CY}^{\orb}(\fY)$ \cite{Bryan2012}. When
$\fY$ satisfies the hard Lefschetz condition \cite[Section
  4]{Fernandez2006}, the {\it DT crepant resolution conjecture (CRC)}
essentially asserts that
\begin{equation} \label{eq:DT-CRC-vertices}
  \sV_{\CY}(Y) \equiv \sV_{\CY}^{\orb}(\fY)
\end{equation}
where we write $\equiv$ to emphasize that the equality is {\it not} as
series, but rather as rational functions. That both sides are indeed
rational functions is not immediate; a proof is given in
\cite{Beentjes2018}, which also proves the DT CRC, whenever $\fY$ has
projective coarse moduli space, for {\it partition functions}
$\sZ_{\CY}(Y)_{\beta} \equiv \sZ_{\CY}^{\orb}(\fY)_{\beta}$
enumerating curves of suitable classes $\beta$.

\subsubsection{}
\label{sec:DT-CRC-full}

For $Y = \cA_{m-1} \times \bC$ and $\fY = [\bC^2/\Gamma] \times \bC$,
the BS/quasimaps correspondence and 3d mirror symmetry yield a version
of the DT CRC for fully equivariant and K-theoretic $1$-leg vertices.
In what follows, $\tilde\sV_{\Pairs}(\fY)$ is exactly the (normalized)
PT $1$-leg vertex for $\bC^3$, except with variables $\vec Q \coloneqq
(Q_0, \ldots, Q_{m-1})$ recording the number of boxes of each of the
$m$ colors.

\begin{conjecture-theorem}[Equivariant K-theoretic DT CRC]
  Using the bijection of fixed points of
  Lemma~\ref{lem:orbifold-fixed-point-bijection}, identify
  $\cat{Coh}_{\sT}(Y)$ with $\cat{Coh}_{\sT}(\fY)$. There is an
  equality of rational functions
  \[ \tilde\sV_{\PiPairs}(Y) \equiv \tilde\sR_{\check\fC_- \leftarrow \check\fC_+} \tilde\sV_{\Pairs}(\fY) \]
  where $\check\fC_\pm$ are the attracting chambers of
  Example~\ref{ex:mirror-attracting-chambers}, and
  \[ \tilde\sR_{\fC' \leftarrow \fC} \coloneqq \tilde\Stab_{\fC'}^{\Ell} \circ \left(\tilde\Stab_{\fC}^{\Ell}\right)^{-1} \]
  are (a certain normalization of) elliptic R-matrices.
\end{conjecture-theorem}

The conjectural aspect arises from whether $\Hilb(\cA_{m-1})$ is
actually 3d mirror to $\cM_m(\bC^2)$ in the sense of
\ref{item:3d-mirror-assumption-3}, so that
Theorem~\ref{thm:3d-mirror-symmetry} is applicable. (Assumptions
\ref{item:3d-mirror-assumption-1} and
\ref{item:3d-mirror-assumption-2} are easily verified to hold.) In the
CY limit this is verified by Proposition~\ref{prop:cy-vertex-formula},
and therefore what follows will constitute a proof. An alternate proof
by direct computation was given in \cite{Ross2017}.

\subsubsection{}

We first explain how to reinterpret the DT CRC as a consequence of 3d
mirror symmetry. Let
\begin{align*}
  X_- &\coloneqq \Hilb(\cA_{m-1}) \\
  X_+ &\coloneqq \Hilb([\bC^2/\Gamma]),
\end{align*}
which are flops of each other (across multiple walls). The BS/quasimap
correspondence provides equalities
\begin{align*}
  \tilde\sV_{\QMaps}(X_-) &= \tilde\sV_{\PiPairs}(\cA_{m-1} \times \bC) \\
  \tilde\sV_{\QMaps}(X_+) &= \tilde\sV_{\Pairs}([\bC^2/\Gamma] \times \bC)
\end{align*}
after the appropriate changes of variables. Hence the DT CRC is a
comparison of $\sV_{\QMaps}(X_+)$ and $\sV_{\QMaps}(X_-)$. On the 3d
mirror side, Theorem~\ref{thm:3d-mirror-symmetry} then yields
\[ \left(\tilde\Stab_{\check\fC_+}^{\Ell}\right)^{-1} \tilde\sV_{\QMaps}(X_+) = \kappa^* \tilde\sV_{\QMaps}(\cM_m(\bC^2)) = \left(\tilde\Stab_{\check\fC_-}^{\Ell}\right)^{-1} \tilde\sV_{\QMaps}(X_-), \]
as desired. That all relevant $1$-leg vertices are rational functions
follows from more general expectations for fully-equivariant quasimap
vertices, see e.g. \cite{Smirnov2016}.

\subsubsection{}

In the CY limit, using \eqref{eq:quasimap-vertex-CY-limit}, the DT CRC
becomes a comparison of
\[ \sV_{\pm} \coloneqq S^\bullet \left(T^{< 0, \check\fC_\pm}_{\vec\lambda}\cM_m(\bC^2)\right)^\vee\bigg|_{(x,y)=(Q,Q^{-1})} \]
for the two different attracting chambers $\check\fC_\pm$ of
Example~\ref{ex:mirror-attracting-chambers}. Note that tangent spaces
of symplectic spaces in general can be written as
\[ T_{\vec\lambda} = T^{< 0,\fC} + \hbar (T^{> 0,\fC})^\vee. \]
Hence a change of attracting chamber $\fC \leadsto \fC'$ can only
change $T^{<0}$ as
\begin{equation} \label{eq:Tn-change-of-chamber}
  T^{< 0, \fC'} = T^{< 0, \fC} - G + \hbar G^\vee
\end{equation}
for some subspace $G \subset T^{< 0,\fC}$. The change of
variables $(x,y) = (Q,Q^{-1})$ implies $\hbar = 1$. Using that
\[ S^\bullet(G) = (-1)^{\dim G} \det(G) S^\bullet(G^\vee), \]
it follows immediately that
\[ \sV_+ = (\text{monomial}) \cdot \sV_-, \]
The discrepancy of an overall monomial factor in comparison to
\eqref{eq:DT-CRC-vertices} comes from
\[ \tilde\sV_{\Pairs,\CY}(\fY) = \vec Q^{\cdots} \sV_{\CY}^{\orb}(\fY) \]
for some monomial $\vec Q^{\cdots}$ (see e.g. \cite[Theorem
  3.1]{Ross2017}). Presumably this monomial is exactly the CY limit of
$\tilde\sR_{\check\fC_- \leftarrow \check\fC_+}$.

\phantomsection
\addcontentsline{toc}{section}{References}

\begin{small}
\bibliographystyle{alpha}
\bibliography{results}
\end{small}

\end{document}